\documentclass[11pt,reqno]{amsart}%
\usepackage{amsmath, amsfonts, amssymb, amsthm, amscd, amsbsy}
\usepackage{fancyhdr}
\usepackage[usenames,dvipsnames,svgnames,x11names,hyperref]{xcolor}
\usepackage{geometry}
\usepackage{graphicx}
\usepackage{hyperref}
\usepackage{amsmath}
\usepackage{amsfonts}
\usepackage{amssymb}%
\providecommand{\U}[1]{\protect\rule{.1in}{.1in}}
\hypersetup{backref=true,
pagebackref=true,
hyperindex=true,
colorlinks=true,
breaklinks=true,
urlcolor=NavyBlue,
linkcolor=Fuchsia,
bookmarks=true,
bookmarksopen=false,
filecolor=black,
citecolor=ForestGreen,
linkbordercolor=red
}
\newtheorem{theorem}{Theorem}[section]
\theoremstyle{plain}

\newtheorem{corollary}{Corollary}[section]

\newtheorem{definition}{Definition}[section]

\newtheorem{lemma}{Lemma}[section]

\newtheorem{remark}{Remark}[section]

\numberwithin{equation}{section}
\allowdisplaybreaks
\newtheorem{theorema}{Theorem}[section]

\AtBeginDocument{\hypersetup{pdfborder={0 0 0.01}}}
\DeclareMathOperator{\Vol}{Vol}
\begin{document}
\title[logarithmic Sobolev inequalities]{Logarithmic Sobolev, Poincar\'e and Beckner Inequalities on Hyperbolic Spaces and Riemannian Manifolds}
\author{Anh Xuan Do}
\address{Anh Xuan Do: Department of Mathematics, University of Connecticut, Storrs, CT
06269, USA}
\email{anh.do@uconn.edu}
\author{Debdip Ganguly}
\address{Debdip Ganguly: Theoretical Statistics and Mathematics Unit\\
Indian Statistical Institute, Delhi Centre\\
S.J. Sansanwal Marg, New Delhi, Delhi 110016, India}
\email{debdip@isid.ac.in}
\author{Nguyen Lam}
\address{Nguyen Lam: School of Science and the Environment, Grenfell Campus, Memorial
University of Newfoundland, Corner Brook, NL A2H5G4, Canada}
\email{nlam@mun.ca}
\author{Guozhen Lu}
\address{Guozhen Lu: Department of Mathematics, University of Connecticut, Storrs, CT
06269, USA}
\email{guozhen.lu@uconn.edu}

\begin{abstract}
We investigate several functional and geometric inequalities on the hyperbolic space $\mathbb{H}^N$, with a primary emphasis on logarithmic Sobolev inequalities, Poincaré inequalities, and Beckner-type inequalities, all studied within the framework of the $AB$ program. The main analytical tool employed throughout this paper is symmetrization. More precisely, our approach relies on an improved version of the P\'olya–Szeg\"o inequality on the hyperbolic space, obtained through a careful comparison of the gradient norms of rearranged functions in the hyperbolic and Euclidean settings.  

For Beckner-type inequalities, we adopt a semigroup approach based on sharp estimates for the heat semigroup, leading to refined interpolation inequalities between Poincaré and logarithmic Sobolev inequalities. Finally, we extend our results beyond hyperbolic space to a class of Riemannian model manifolds $\mathbb{M}^N$ satisfying the \emph{centered isoperimetric inequality}. This shows that the inequalities and methods developed in this work are robust and rely mainly on geometric and isoperimetric properties, rather than on the specific structure of hyperbolic space itself.

\end{abstract}
\subjclass{}
\keywords{}
\maketitle

\section{Introduction}\label{secintro}
The primary objective of this paper is to investigate a range of  functional and geometric inequalities on hyperbolic spaces. In particular, we focus on logarithmic Sobolev inequalities, Poincar\'e inequalities, and Beckner-type inequalities. We analyze their validity, and sharp constants,  and explore how the underlying hyperbolic geometry influences these inequalities in comparison with the Euclidean setting.

Throughout this paper, unless explicitly stated otherwise, we work with the Poincar\'e ball model of the hyperbolic space \(\mathbb{H}^N\). In this model, \(\mathbb{H}^N\) is identified with the Euclidean unit ball
\[
B^N := \{ x \in \mathbb{R}^N : |x|^2 < 1 \},
\]
endowed with the Riemannian metric
\[
g(x) = \left( \frac{2}{1 - |x|^2} \right)^2 dx \otimes dx,
\]
where \(|x|^2 = \sum_{i=1}^N x_i^2\) denotes the square of the standard Euclidean norm. Equivalently, the metric \(g\) is conformal to the Euclidean metric \(dx^2\) with conformal factor \(\frac{2}{1 - |x|^2}\).

By definition, \(\mathbb{H}^N\) is a complete, non-compact Riemannian manifold of dimension \(N\) with constant sectional curvature equal to \(-1\). It is well known that any two such Riemannian manifolds are isometric; see, for instance, \cite{RAT}. Unless otherwise specified, all computations in this article are carried out within the Poincar\'e ball model, and the notation \(\mathbb{H}^N\) will always refer to this realization.

The Riemannian inner product on the tangent space of \(\mathbb{H}^N\) is denoted by \(\langle \cdot, \cdot \rangle_{\mathbb{H}^N}\), or simply by \(\langle \cdot, \cdot \rangle\) when no ambiguity arises. The associated volume element is
\[
dV_{\mathbb{H}} = \left( \frac{2}{1 - |x|^{2}} \right)^{N} dx,
\]
where $dx$ represents the Lebesgue measure on $\mathbb{R}^{N}$.

Let $\nabla_{\mathbb{H}}$ denote the gradient vector field on $\mathbb{H}^{N}$. In local coordinates, we have
\[
\nabla_{\mathbb{H}} = \left( \frac{1 - |x|^{2}}{2} \right)^{2} \nabla,
\]
where $\nabla$ is the standard Euclidean gradient. For any smooth vector field $X$ on $\mathbb{H}^{N}$, the divergence $\operatorname{div} X$ is a smooth function uniquely defined by the following theorem:

\begin{theorema}[Divergence Theorem]
Let $X$ be a smooth vector field on the hyperbolic space $\mathbb{H}^N$. Then there exists a unique smooth function, denoted by $\operatorname{div} X$, such that for every $u \in C_c^\infty(\mathbb{H}^N)$,
\[
\int_{\mathbb{H}^N} (\operatorname{div} X)\, u \, dV_{\mathbb{H}}
=
- \int_{\mathbb{H}^N} \langle X, \nabla_{\mathbb{H}} u \rangle \, dV_{\mathbb{H}}.
\]
\end{theorema}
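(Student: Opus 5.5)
The plan is to reduce everything to the Euclidean integration by parts on the ball $B^N$. We write the conformal factor as $\rho(x)=\frac{2}{1-|x|^2}$, so that $dV_{\mathbb{H}}=\rho^N\,dx$. We view a smooth vector field $X$ in the global chart of the Poincar\'e ball as an $\mathbb{R}^N$-valued smooth function $X=(X^1,\dots,X^N)$ on $B^N$.

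The first step is to compute the integrand $\langle X,\nabla_{\mathbb{H}}u\rangle$ in Euclidean terms. The metric is $\langle V,W\rangle_{\mathbb{H}^N}=\rho^2\,V\cdot W$, and by the stated coordinate expression $\nabla_{\mathbb{H}}u=\rho^{-2}\nabla u$ (this is exactly $(\frac{1-|x|^2}{2})^2\nabla u$). Hence
\[
\langle X,\nabla_{\mathbb{H}}u\rangle=\rho^2\,X\cdot\rho^{-2}\nabla u=X\cdot\nabla u ,
\]
where $\cdot$ is the Euclidean dot product.

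The second step is existence. For $u\in C_c^\infty(\mathbb{H}^N)$ the support of $u$ is a compact subset of the open ball. The vector field $\rho^N X$ is smooth on $B^N$, since $\rho$ is smooth there. The classical Euclidean divergence theorem, with no boundary term because $u$ has compact support, gives
\[
-\int_{\mathbb{H}^N}\langle X,\nabla_{\mathbb{H}}u\rangle\,dV_{\mathbb{H}}
=-\int_{B^N}(\rho^N X)\cdot\nabla u\,dx
=\int_{B^N}\operatorname{div}_{\mathbb{R}^N}(\rho^N X)\,u\,dx .
\]
We therefore define
\[
\operatorname{div}X:=\rho^{-N}\operatorname{div}_{\mathbb{R}^N}(\rho^N X)
=\operatorname{div}_{\mathbb{R}^N}X+N\rho^{-1}\nabla\rho\cdot X .
\]
This is smooth, and multiplying back by $\rho^N$ shows that it satisfies the required identity. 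For orientation, $\nabla\rho=\rho^2x$, so the formula reads $\operatorname{div}_{\mathbb{R}^N}X+\frac{2N}{1-|x|^2}\,x\cdot X$.

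The third step is uniqueness. Suppose $f$ and $g$ are smooth functions that both satisfy the identity. Subtracting gives $\int_{B^N}(f-g)\,u\,\rho^N\,dx=0$ for all $u\in C_c^\infty(B^N)$. Since $(f-g)\rho^N$ is continuous, the fundamental lemma of the calculus of variations gives $(f-g)\rho^N\equiv 0$. As $\rho>0$, it follows that $f=g$.

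I do not expect a serious obstacle. The only point needing care is keeping the coordinate expression of the gradient consistent with the metric, so that the factors $\rho^{2}$ cancel as above. One should also note that no boundary terms arise: the support of $u$ is compact inside the open ball and stays away from $\partial B^N$, where $\rho$ blows up.
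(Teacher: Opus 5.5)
Your proof is correct and follows the standard route the paper relies on. The paper gives no separate argument; it cites textbook sources and records the coordinate formula $\operatorname{div}X=\frac{1}{\sqrt{\det g}}\,\partial_i\bigl(X^i\sqrt{\det g}\bigr)$, which is exactly your $\rho^{-N}\operatorname{div}_{\mathbb{R}^N}(\rho^N X)$, since $\sqrt{\det g}=\rho^N$ in the global Poincar\'e chart. With $X=\nabla_{\mathbb{H}}u$ it also reproduces the paper's formula for $\Delta_{\mathbb{H}}$.

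One cosmetic point: the paper uses $\rho$ for the geodesic distance $d(x,0)$, so give the conformal factor $\frac{2}{1-|x|^2}$ a different name.
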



In local coordinates, the divergence of a vector field $X$ is given by
\[
\operatorname{div} X
=
\frac{1}{\sqrt{\det g}}
\frac{\partial}{\partial x^{i}}
\left( X^{i} \sqrt{\det g} \right).
\]
See, for instance,~\cite{Gre24, Jost}.
Using this definition, the Laplace--Beltrami operator $\Delta_{\mathbb{H}}$ on $\mathbb{H}^N$ is defined by
\[
\Delta_{\mathbb{H}} := \operatorname{div} \circ \nabla_{\mathbb{H}^N} .
\]
In local coordinates, it admits the explicit representation
\[
\Delta_{\mathbb{H}}
=
\left( \frac{1 - |x|^{2}}{2} \right)^{2} \Delta
+
(N-2)\left( \frac{1 - |x|^{2}}{2} \right) x \cdot \nabla,
\]
where $\nabla$ and $\Delta$ denote the standard Euclidean gradient and Laplacian, respectively, and $x \cdot \nabla$ stands for the Euclidean inner product.

The hyperbolic distance between two points $x,y \in \mathbb{H}^N$ is denoted by $d(x,y)$. 
In particular, the distance between a point $x$ and the origin is given by
\[
\rho(x) := d(x,0)
= \int_{0}^{|x|} \frac{2}{1-s^{2}}\,ds
= \log\!\left(\frac{1+|x|}{1-|x|}\right).
\]
Consequently,
\[
|x| = \tanh\!\left(\frac{\rho(x)}{2}\right),
\]
and
\[
1-|x|^{2}
= 1-\tanh^{2}\!\left(\frac{\rho(x)}{2}\right)
= \frac{4e^{\rho(x)}}{(1+e^{\rho(x)})^{2}}.
\]

For arbitrary points $x,y \in \mathbb{H}^N$, the hyperbolic distance is explicitly given by
\[
d(x,y)
= \cosh^{-1}\!\left(
1 + \frac{2|x-y|^{2}}{(1-|x|^{2})(1-|y|^{2})}
\right).
\]
As a consequence, a subset of $\mathbb{H}^N$ is a hyperbolic ball if and only if it is a Euclidean ball in $\mathbb{R}^N$ contained in $\mathbb{H}^N$, possibly with a different center and radius, which can be computed explicitly from the above distance formula; see~\cite{RAT}.
We denote by $B_r(x)$ the geodesic ball of radius $r>0$ centered at $x \in \mathbb{H}^N$, that is,
\[
B_r(x) := \{\, y \in \mathbb{H}^N : d(x,y) < r \,\}.
\]

\medskip
\subsection{Sobolev and Poincar\'e inequality on the hyperbolic space.}

It is worth emphasizing that, on the hyperbolic space $\mathbb{H}^{N}$, the bottom of the spectrum of the nonlinear operator
\[
-\operatorname{div}\!\left(|\nabla_{\mathbb{H}} u|^{p-2}\nabla_{\mathbb{H}} u\right), \qquad p>1,
\]
is strictly positive. This stands in sharp contrast to the Euclidean setting, where the spectrum of the corresponding $p$-Laplacian coincides with $[0,\infty)$. More precisely, for every $p>1$, the Poincar\'e inequality on $\mathbb{H}^{N}$ asserts that
\begin{equation}\label{Poin}
\int_{\mathbb{H}^N} |\nabla_{\mathbb{H}} u|^p \, dV_{\mathbb{H}}
\geq \left(\frac{N-1}{p}\right)^p
\int_{\mathbb{H}^N} |u|^p \, dV_{\mathbb{H}},
\end{equation}
for all $u \in C_c^\infty(\mathbb{H}^N)$. The constant $\left(\frac{N-1}{p}\right)^p$ is sharp, although it is never attained by any nontrivial function. Furthermore, this inequality implies that the shifted operator
\[
-\operatorname{div}\!\left(|\nabla_{\mathbb{H}} u|^{p-2}\nabla_{\mathbb{H}} u\right)
- \left(\frac{N-1}{p}\right)^p
\]
is subcritical on $\mathbb{H}^{N}$, reflecting the strong influence of the negative curvature on the spectral and variational properties of the $p$-Laplacian. For further details, see \cite{BG16, BGG17, FLL22, FLLM23, KS16, MS08, NN19, H21}, among others.

\medskip

A substantial body of work has been devoted to refining the inequality~\eqref{Poin}. In particular, in the case $p=2$, for $N \geq 3,$ one obtains a Poincaré--Sobolev inequality of the form
\[
\int_{\mathbb{H}^N} |\nabla_{\mathbb{H}} u|^2 \, dV_{\mathbb{H}}
- \frac{(N-1)^2}{4} \int_{\mathbb{H}^N} |u|^2 \, dV_{\mathbb{H}}
\geq C(N,q)^2 \left( \int_{\mathbb{H}^N} |u|^q \, dV_{\mathbb{H}} \right)^{\frac{2}{q}},
\]
valid for all $u \in C_c^\infty(\mathbb{H}^N)$, where $2<q\leq \frac{2N}{N-2},$ and $C(N,q)>0$ is a constant depending only on $N$ and $q.$
In particular, at the critical Sobolev exponent $q=\frac{2N}{N-2}$, which corresponds to the Sobolev embedding on $\mathbb{H}^N$, the previous inequality reduces to the following critical Poincaré--Sobolev inequality:
\begin{equation}\label{PoinSob}
\int_{\mathbb{H}^N} |\nabla_{\mathbb{H}} u|^2 \, dV_{\mathbb{H}}
- \frac{(N-1)^2}{4} \int_{\mathbb{H}^N} |u|^2 \, dV_{\mathbb{H}}
\geq C(N)^2
\left( \int_{\mathbb{H}^N} |u|^{\frac{2N}{N-2}} \, dV_{\mathbb{H}} \right)^{\frac{N-2}{N}},
\end{equation}
where $C(N)>0$ is a constant depending only on the dimension. This inequality highlights the interplay between the spectral gap induced by the negative curvature of $\mathbb{H}^N$ and the critical Sobolev growth. 

In the three–dimensional case $N=3$, it was shown  by Benguria, Frank, and Loss~\cite{BFL08}, and by Mancini and Sandeep~\cite{MS08}  that the optimal constant in~\eqref{PoinSob} coincides with the sharp Euclidean Sobolev constant, namely
\[
C(N)=S(N,2).
\]
(see also Lu and Yang \cite{LY19} for a different proof.)
On the other hand, it was proved by Hebey \cite{Heb99} that for $N\ge 4$ we have $$C(N)<S(N, 2).$$
Here, for $1\leq p<N$, $S(N,p)$ denotes the best (sharp) constant in the Euclidean $L^p$–Sobolev inequality
\[
\int_{\mathbb{R}^N} |\nabla u|^p \, dx
\geq S(N,p)^p
\left( \int_{\mathbb{R}^N} |u|^{\frac{Np}{N-p}} \, dx \right)^{\frac{N-p}{N}},
\]
which holds for all $u\in C_c^\infty(\mathbb{R}^N)$. We refer to the classical works of Aubin~\cite{Aub76} and Talenti~\cite{T76} for the explicit characterization of $S(N,p)$ and the extremal functions. This identification of the optimal constant in the hyperbolic setting in dimension $N=3$ provides an explicit instance in which the geometry at infinity does not alter the sharp Sobolev constant. In particular, it furnishes a concrete realization, on $\mathbb{H}^3$, of the so-called \emph{$A$-part} of the $AB$ program on sharp Sobolev inequalities on Riemannian manifolds, as formulated by Druet and Hebey~\cite{DH02}.

More recently,   higher $k-th$ order Poincar\'e-Sobolev inequalities on hyperbolic spaces have been established by Lu and Yang in \cite{LY19} using the Helgason-Fourier analysis on symmetric spaces and sharp constants have been  shown to be equal to the $k-th$ order Sobolev constants in Euclidean space $\mathbb{R}^N$  for $N=\frac{2k+1}{2}$ and strictly smaller than  Sobolev constants for $N\ge 2k+2$ in \cite{LY221}. (see also related results on complex hyperbolic spaces by Lu and Yang \cite{LY22} and Flynn, Lu and Yang  on  on quaternionic hyperbolic spaces and on the Cayley hyperbolic plane \cite{FLY24}.)

\subsection{$AB$ Program.}
The so-called \emph{$A$-part} of the $AB$ program is concerned with establishing a sharp interpolation inequality that connects the critical Sobolev norm with the $L^p$ and gradient norms. More precisely, it aims to seek a constant $B$ for which there exists an exponent $\theta \in [1,p]$ such that the inequality
\begin{equation}\tag{$AB_{p,\mathrm{opt}}^{\theta}$}
S(N,p)^{\theta}\,\|u\|_{\frac{Np}{N-p}}^{\theta}
\leq \|\nabla u\|_p^{\theta} + B\,\|u\|_p^{\theta}
\end{equation}
holds for all $u \in C_c^\infty(\mathbb{R}^N)$. Here $S(N,p)$ denotes the sharp Euclidean Sobolev constant, and the inequality may be viewed as a refined Sobolev estimate in which the critical embedding is stabilized by the addition of a lower-order $L^p$ term. Determining the value of $B$—as well as the admissible range of $\theta$—is a central issue in understanding how geometric or spectral effects compensate for the lack of compactness at the critical exponent.
In the case of complete compact Riemannian manifolds, it was shown by Hebey and Vaugon~\cite{HV95, HV96}, Druet~\cite{Dru99}, and Aubin and Li~\cite{AL99} that~$(AB_{p, \mathrm{opt}}^{\theta})$ holds for $\theta = \min\{2, p\}$, resolving a longstanding conjecture due to Aubin~\cite{Aub76}. For a full exposition, see Aubin~\cite{Aub76}, Hebey~\cite{Heb99}, or Druet and Hebey~\cite{DH02}. For non-compact Riemannian manifolds, several results guarantee the validity of~$(AB_{p, \mathrm{opt}}^{\theta})$. For instance, Aubin, Druet, and Hebey~\cite{ADH98} proved that $(AB_{p, \mathrm{opt}}^{p})$ holds for any $1 \leq p < N$ with $B = 0$ on Cartan–Hadamard manifolds satisfying the Cartan–Hadamard conjecture. Specifically, $(AB_{p, \mathrm{opt}}^{p})$ is valid on the hyperbolic space for any $1 \leq p < N$. Since the inequality~\eqref{PoinSob} relates both to the sharp Poincaré and sharp Sobolev inequalities, the constants in~\eqref{PoinSob} are sharp and cannot be improved. Thus,~\eqref{PoinSob} provides an instance in which the sharp second constant $B$ can be computed explicitly; see~\cite{Heb99} for more examples when $p = 2$.

It is worth noting that, for dimensions $N \geq 4$, Hebey  \cite{Heb99}   proved that the constant $C(N)$ appearing in \eqref{PoinSob} is strictly smaller than the Euclidean Sobolev constant $S(N,2)$.  Tertikas and Tintarev   \cite{TT07} then proved the existence of the extremal functions of the inequality \eqref{PoinSob}. (see also a recent result of the existence of extremal function in $k-th$ higher order Poincar\'e-Sobolev inequalities by Lu and Tao \cite{LuTao}.) 
In this direction, by exploiting the conformal covariance of the GJMS operators on the hyperbolic space $\mathbb{H}^N$, Hebey established in \cite{Heb99} the following sharp Sobolev-type inequality: for all $u \in C_c^\infty(\mathbb{H}^N)$,
\begin{equation}\label{Sobolev}
\int_{\mathbb{H}^N} |\nabla_{\mathbb{H}} u|^2 \, dV_{\mathbb{H}}
- \frac{N(N-2)}{4} \int_{\mathbb{H}^N} |u|^2 \, dV_{\mathbb{H}}
\geq S(N,2)^2
\left( \int_{\mathbb{H}^N} |u|^{\frac{2N}{N-2}} \, dV_{\mathbb{H}} \right)^{\frac{N-2}{N}} .
\end{equation}
Here $S(N,2)$ denotes the optimal Sobolev constant in the Euclidean setting. Moreover, the term $\frac{N(N-2)}{4}$ is sharp for all $N \geq 4$, in the sense that it cannot be replaced by any larger constant while preserving the validity of the inequality. This result illustrates the subtle interplay between curvature effects, conformal invariance, and sharp functional inequalities on hyperbolic space. In fact, 
the left hand side is exactly  $\int_{\mathbb{H}^N} uP_1 udV_{\mathbb{H}}$, where $P_1$ is the conformal Laplacian on $\mathbb{H}^N$.
This Sobolev inequality can actually also be obtained by the conformal invariance of the Sobolev inequality on the Euclidean ball.


More recently, Nguyen \cite{H18} derived an $L^{p}$-analogue of the Poincaré–Sobolev inequality on the hyperbolic space $\mathbb{H}^{N}$. More precisely, for dimensions $N \geq 4$ with
\[
\frac{2N}{N-1} \leq p < N,
\]
Nguyen proved that the following inequality holds for all $u \in C_c^\infty(\mathbb{H}^N)$:
\begin{equation}\label{pPSob}
\int_{\mathbb{H}^N} |\nabla_{\mathbb{H}} u|^p \, dV_{\mathbb{H}}
- \left( \frac{N-1}{p} \right)^p \int_{\mathbb{H}^N} |u|^p \, dV_{\mathbb{H}}
\geq S(N,p)^p
\left( \int_{\mathbb{H}^N} |u|^{\frac{Np}{N-p}} \, dV_{\mathbb{H}} \right)^{\frac{N-p}{N}} .
\end{equation}
Here $S(N,p)$ denotes the optimal constant in the Euclidean $L^p$–Sobolev inequality. The inequality \eqref{pPSob} thus establishes the validity of the sharp Poincaré–Sobolev inequality corresponding to the optimal coefficient in front of the $L^{p}$-term, namely $(AB_{p,\mathrm{opt}}^{p})$. We note that when $p=N$, a Hardy-Moser-Trudinger inequality holds on $\mathbb{H}^N$ as shown by Liang et al in \cite{LLWY}.

Despite its strength, inequality \eqref{pPSob} does not cover all relevant cases. In particular, the three-dimensional setting $N=3$ is not included. Moreover, for dimensions $N \geq 4$, the range of exponents
\[
2 < p < \frac{2N}{N-1}
\]
remains open, leaving several natural and mathematically interesting regimes of the Poincaré-Sobolev inequality on $\mathbb{H}^{N}$ yet to be understood.

Our first main goal in this paper is to investigate $(AB_{p, \mathrm{opt}}^{p})$ with $B<0$ for the range $2 < p < \frac{2N}{N-1}$, thus extending  the validity of \eqref{pPSob} to the full  range $2<p<N$. More precisely, we will prove the following Poincar\'e-Sobolev inequality in the spirit of $(AB_{p, \mathrm{opt}}^{p})$ that complements the results in \cite{H18}:

\begin{theorem}\label{pSob}
Let $2 \leq p < N$. Then there exists a positive constant $\lambda(p,N)$ such that, for all $u \in C_c^\infty(\mathbb{H}^N)$,
\[
\int_{\mathbb{H}^N} |\nabla_{\mathbb{H}} u|^p \, dV_{\mathbb{H}}
- \lambda(p,N) \int_{\mathbb{H}^N} |u|^p \, dV_{\mathbb{H}}
\geq S(N,p)^p
\left( \int_{\mathbb{H}^N} |u|^{\frac{Np}{N-p}} \, dV_{\mathbb{H}} \right)^{\frac{N-p}{N}} .
\]
Here $S(N,p)$ denotes the sharp Euclidean $L^p$–Sobolev constant.
\end{theorem}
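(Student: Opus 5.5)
By symmetrization, reduce to the case where $u$ is nonnegative and radially decreasing with respect to the origin of the Poincar\'e ball. Replacing $u$ by its hyperbolic symmetric decreasing rearrangement $u^\sharp$ leaves every $L^q(dV_{\mathbb{H}})$ norm unchanged. The hyperbolic P\'olya--Szeg\"o inequality does not increase $\int|\nabla_{\mathbb{H}}u|^p\,dV_{\mathbb{H}}$. So it is enough to prove the inequality for $u=u(\rho)$ nonincreasing in $\rho=d(x,0)$.

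Next, transfer such a radial function to a Euclidean radial function by matching the volumes of the level sets. Let $V(\rho)=\omega_{N-1}\int_0^\rho \sinh^{N-1}t\,dt$ be the hyperbolic volume of $B_\rho(0)$. Define $r(\rho)$ by $\frac{\omega_{N-1}}{N}r^N=V(\rho)$, and set $v(r)=u(\rho)$. Then $v$ is the Euclidean rearrangement with the same distribution function, so all $L^q$ norms agree:
\[
\|v\|_{L^q(\mathbb{R}^N)}=\|u\|_{L^q(\mathbb{H}^N)}.
\]
For the gradient, with $A(\rho)=\omega_{N-1}\sinh^{N-1}\rho$ the hyperbolic perimeter and $P(r)=\omega_{N-1}r^{N-1}$ the Euclidean one, the coarea formula gives
\[
\int_{\mathbb{H}^N}|\nabla_{\mathbb{H}}u|^p\,dV_{\mathbb{H}}=\int |u'(\rho)|^p A(\rho)\,d\rho,\qquad
\int_{\mathbb{R}^N}|\nabla v|^p\,dx=\int |u'(\rho)|^p \Big(\frac{P(r)}{A(\rho)}\Big)^{p}A(\rho)\,d\rho,
\]
using $dr/d\rho=A/P$. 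Hence
\[
\int_{\mathbb{H}^N}|\nabla_{\mathbb{H}}u|^p\,dV_{\mathbb{H}}-\int_{\mathbb{R}^N}|\nabla v|^p\,dx=\int |u'|^p A\Big(1-(P/A)^p\Big)d\rho .
\]
This is nonnegative by the hyperbolic isoperimetric inequality $A\ge P$. Applying the Euclidean sharp Sobolev inequality to $v$ then yields $S(N,p)^p\|u\|_{Np/(N-p)}^p$. It remains to show that the excess is at least $\lambda\int |u|^p\,dV_{\mathbb{H}}$.

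This is the improved P\'olya--Szeg\"o step, and I expect it to be the main obstacle. The weight $w(\rho)=A(\rho)\bigl(1-(P(r(\rho))/A(\rho))^p\bigr)$ is positive for $\rho>0$. Near $0$ it behaves like $c\rho^{N+1}$, since the ratio $P/A=1-O(\rho^2)$. Near infinity, $P/A\to0$ because $A\sim e^{(N-1)\rho}$ while $P\sim V^{(N-1)/N}$ grows more slowly. So $w\approx A$ at large $\rho$, and there a one-dimensional Hardy/Poincar\'e inequality is available, as in \eqref{Poin}. The goal is a weighted Hardy inequality
\[
\int_0^\infty|u'|^p w\,d\rho\ \ge\ \lambda\int_0^\infty |u|^pA\,d\rho
\]
for nonincreasing $u$ vanishing at infinity. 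I would prove it with the standard Muckenhoupt-type criterion for Hardy inequalities with the boundary condition at infinity. That criterion requires
\[
\sup_{R}\Big(\int_0^R A\Big)\Big(\int_R^\infty w^{-1/(p-1)}\Big)^{p-1}<\infty .
\]
For large $R$ both factors behave like $e^{(N-1)R}$ and $e^{-(N-1)R}$, so the product is bounded. For small $R$, the first factor is about $R^N$. The second factor is $O(1)$, because $w^{-1/(p-1)}\sim\rho^{-(N+1)/(p-1)}$ is integrable near $R$ only after restricting to $[R,\infty)$; there it gives about $R^{1-(N+1)/(p-1)}$ when $(N+1)/(p-1)>1$. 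So the product behaves like $R^{N+(p-1)-(N+1)}=R^{p-2}$, which is bounded precisely when $p\ge2$. This explains the hypothesis $p\ge2$.

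If the Muckenhoupt bookkeeping proves delicate, a fallback is to split the integral at a radius $\rho_0$. Beyond $\rho_0$, compare $w\ge cA$ and use the sharp one-dimensional Poincar\'e inequality. On $[0,\rho_0]$, use that $u$ is nonincreasing, so $|u(\rho)|\le |u(\rho_0)|+\int_\rho^{\rho_0}|u'|$, and control this through H\"older with the weight $\rho^{N+1}$; the same exponent count $R^{p-2}$ appears there. Finally, $\lambda(p,N)$ is the resulting Hardy constant, and the theorem follows by combining the three displays.
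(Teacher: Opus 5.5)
Your proposal is correct, and its architecture is the paper's. You pass to $u_g^\sharp$ by the hyperbolic P\'olya--Szeg\"o inequality. You split $\|\nabla_{\mathbb{H}}u_g^\sharp\|_p^p$ into the Euclidean energy of the volume-matched rearrangement plus an excess term. You bound the excess below by $\lambda\|u\|_p^p$ with a one-dimensional Hardy inequality, and you conclude with the sharp Euclidean Sobolev inequality. Your excess identity is exactly \eqref{SRI} written in the radius $\rho$ instead of the volume $s=\sigma_N\Phi(\rho)$. Since $A\,d\rho=ds$ and $A^p-P^p=(N\sigma_N)^p k_{N,p}(s/\sigma_N)$, one has $\int_0^\infty|u'|^p w\,d\rho=(N\sigma_N)^p\int_0^\infty|(u^*)'(s)|^p k_{N,p}(s/\sigma_N)\,ds$.

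The genuine difference is the Hardy step.
\begin{itemize}
\item The paper proves the pointwise bound $k_{N,p}(s)\ge C(N,p)s^p$ (Lemmas \ref{L2I} and \ref{LpI}). It then applies the classical power-weight Hardy inequality $\int_0^\infty|v'|^p s^p\,ds\ge p^{-p}\int_0^\infty|v|^p\,ds$, for $p=2$ via the substitution $w=vs^{1/2}$. This gives the explicit $\lambda(N,p)=C(N,p)N^p/p^p$, and for $p=2$ the value $\frac{N^2(N-1)}{4(N+2)}$, which is needed later in Theorem \ref{T1.1} and the Gaussian inequalities.
\item You verify Muckenhoupt's two-weight condition instead. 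The paper's pointwise bound is a sufficient condition for yours, and both rest on the same asymptotics of the excess weight at $0$ and at $\infty$. Your route yields only an implicit constant.
\item Because Muckenhoupt's condition is also necessary, your computation shows more: for $p<2$ the excess cannot dominate $\lambda\|u\|_p^p$ for any $\lambda>0$. The $R^{p-2}$ behaviour at the origin therefore makes $p\ge 2$ intrinsic to this comparison method. The paper only observes that the pointwise ratio tends to $0$ for $p<2$ (Lemma \ref{lem:CNP-asymptotics}).
\end{itemize}

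Two small corrections. First, the second Muckenhoupt factor is not $O(1)$ as $R\to0$. After raising to the power $p-1$ it is of order $R^{p-N-2}$, and that is what produces your (correct) product $R^{p-2}$. Second, your fallback loses the endpoint. The pointwise H\"older bound $|u(\rho)|^p\lesssim|u(\rho_0)|^p+\rho^{p-N-2}\int_\rho^{\rho_0}|u'|^p t^{N+1}\,dt$ leads to $\int_0\rho^{p-3}\,d\rho$, which diverges at $p=2$. So the case $p=2$ needs a genuine Hardy inequality rather than this crude estimate.
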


Moreover, as a consequence of our approach, we also obtain the Poincar\'e-Gagliardo-Nirenberg inequality in the spirit of $(AB_{p, \mathrm{opt}}^{p})$.

\medskip
\subsection{Logarithmic Sobolev inequalities.}
Our next main objective in this paper is to study the logarithmic Sobolev inequalities within the framework of the AB program, and to apply these results to investigate the Poincar\'e inequalities with Gaussian measures and their interpolations, commonly referred to as the Beckner inequalities. These inequalities are widely recognized as fundamental tools in functional analysis, probability theory, and mathematical physics, especially in the analysis of Markov processes, statistical mechanics, and information theory. They provide bounds on the entropy of a probability distribution relative to a reference measure, often yielding valuable insights into convergence rates in stochastic systems.

In the Euclidean setting, the logarithmic Sobolev inequality was first introduced by Leonard Gross in his seminal  work in 1975  \cite{Gro75}, in the framework of Gaussian measures. In its classical form, the inequality states that
\begin{equation}\label{GlogS}
\int_{\mathbb{R}^N} |u|^2 \ln |u|^2 \, d\mu
\leq 2 \int_{\mathbb{R}^N} |\nabla u|^2 \, d\mu ,
\end{equation}
where
\[
d\mu(x) = (2\pi)^{-\frac{N}{2}} e^{-\frac{|x|^2}{2}} \, dx
\]
denotes the standard Gaussian probability measure on $\mathbb{R}^N$, and the normalization condition
\[
\int_{\mathbb{R}^N} |u|^2 \, d\mu = 1
\]
is imposed. The constant $2$ in \eqref{GlogS} is sharp, and equality is achieved by functions of the form $u(x)=A e^{b\cdot x}$, with $A \in \mathbb{R}$ and $b \in \mathbb{R}^N$.

It is well known that the Gaussian logarithmic Sobolev inequality admits several equivalent formulations with respect to the Lebesgue measure. In particular, by a suitable change of variables, one can derive from \eqref{GlogS} the following Euclidean logarithmic Sobolev inequality:
\begin{equation}\label{ElogS}
\int_{\mathbb{R}^N} |u|^2 \ln |u| \, dx
\leq \frac{N}{4} \ln \!\left( \frac{2}{\pi N e}
\int_{\mathbb{R}^N} |\nabla u|^2 \, dx \right),
\end{equation}
under the normalization $\int_{\mathbb{R}^N} |u|^2 \, dx = 1$.
We refer, for instance, to \cite{Wei78} for further details and related formulations. Since their introduction, logarithmic Sobolev inequalities—both in the Gaussian and in the Lebesgue settings—have played a central role in analysis and probability theory. They have been extensively studied and have found numerous applications, ranging from functional inequalities and concentration of measure to partial differential equations and mathematical physics. See, for example, \cite{WB-99, MDG-04, YF, IG, LL18} and the references therein; this list is by no means exhaustive. In particular, the authors in \cite{PD03} derived the following optimal Euclidean $L^p$-Sobolev logarithmic inequality by using a family of Gagliardo-Nirenberg inequalities on $\mathbb{R}^N$ with full information about the sharp constants and optimizers and applying a suitable limiting process:
\begin{equation}\label{EplogS}
    \int_{\mathbb{R}^N}\left\vert u \right\vert^p\ln{\left\vert u \right\vert}dx \leq \frac{N}{p^2}\ln \left[\mathcal{L}_{N,p} \int_{\mathbb{R}^N}\left\vert \nabla u \right\vert^pdx\right]
\end{equation} with $\int_{\mathbb{R}^N}\left\vert u \right\vert^pdx=1$. Here the sharp constant is
\[
\mathcal{L}_{N,p} = \frac{p}{N}\pi^{-\frac{p}{2}} \left( \frac{p-1}{e} \right)^{p-1} \left( \frac{\Gamma\left( \frac{N}{2} + 1 \right) }{\Gamma\left( \frac{N(p-1)}{p} + 1 \right)} \right)^{\frac{p}{N}},
\]
with \(\Gamma\) denoting the Gamma function.

The second main objective of this paper is to study logarithmic Sobolev inequalities on the hyperbolic space $\mathbb{H}^N$ within the framework of the AB program. More specifically, we aim to determine whether there exists a constant $B$ such that the following logarithmic Sobolev inequality holds:
\begin{equation}\tag{$L_{p,\mathrm{opt}}$}
\int_{\mathbb{H}^N} |u|^p \ln |u| \, dV_{\mathbb{H}}
\leq \frac{N}{p^2}
\ln \!\left[
\mathcal{L}_{N,p}
\left(
\int_{\mathbb{H}^N} |\nabla_{\mathbb{H}} u|^p \, dV_{\mathbb{H}} - B
\right)
\right],
\qquad
\int_{\mathbb{H}^N} |u|^p \, dV_{\mathbb{H}} = 1 .
\end{equation}
The problem is to identify suitable $B\leq0$ under which this inequality is valid. This formulation mirrors the Euclidean theory while capturing the influence of the negative curvature of $\mathbb{H}^N$, and it provides a natural logarithmic counterpart to the $L^p-$Poincaré–Sobolev inequalities investigated earlier.

We remark that if the optimal constant $\mathcal{L}_{N,p}$ in $(L_{p, \mathrm{opt}})$ is not required, then logarithmic Sobolev inequalities can be directly obtained from Hölder's inequality. For instance, starting from Hölder's inequality on $\mathbb{H}^N$,
\[
\|u\|_q^q \leq \|u\|_p^{\alpha q} \|u\|_s^{(1-\alpha) q},
\]
where $1 \leq p \leq q \leq s < \infty$ and $\frac{\alpha q}{p} + \frac{(1-\alpha)q}{s} = 1$, or equivalently $\alpha = \frac{p}{q} \frac{s-q}{s-p}$, one may take the natural logarithm of both sides and deduce
\[
\ln\left(\frac{\|u\|_q}{\|u\|_p}\right) + (\alpha-1)\ln\left(\frac{\|u\|_s}{\|u\|_p}\right) \leq 0.
\]
It is clear that equality holds when $q = p$. Thus, if we define
\[
F(q):= \ln\left(\frac{\|u\|_q}{\|u\|_p}\right) + (\alpha-1)\ln\left(\frac{\|u\|_s}{\|u\|_p}\right),
\]
then $\left. \frac{dF}{dq} \right|_{q=p} \leq 0$. A direct computation shows that
\begin{align*}
\frac{dF}{dq}
&= \frac{d}{dq}\bigl[\ln \|u\|_q\bigr]
   + \frac{d}{dq}\!\left[(\alpha-1)\ln\!\left(\frac{\|u\|_s}{\|u\|_p}\right)\right] \\
&= \frac{d}{dq}\!\left[\frac{1}{q}\ln\!\left(\int_{\mathbb{H}^N} |u|^q \, dV_{\mathbb{H}}\right)\right]
   - \frac{ps}{q^2(s-p)} \ln\!\left(\frac{\|u\|_s}{\|u\|_p}\right) \\
&= -\frac{1}{q^2}\ln(\|u\|_q^q)
   + \frac{1}{q}\frac{1}{\|u\|_q^q}
     \int_{\mathbb{H}^N} |u|^q \ln |u| \, dV_{\mathbb{H}}
   - \frac{ps}{q^2(s-p)} \ln\!\left(\frac{\|u\|_s}{\|u\|_p}\right).
\end{align*}
Evaluating this expression at $q=p$ yields
\[
-\frac{1}{p}\ln \|u\|_p
+ \frac{1}{p}\frac{\displaystyle \int_{\mathbb{H}^N} |u|^p \ln |u| \, dV_{\mathbb{H}}}{\|u\|_p^p}
\leq \frac{s}{p(s-p)} \ln\!\left(\frac{\|u\|_s}{\|u\|_p}\right).
\]
Multiplying both sides by $\|u\|_p^p$ and rearranging terms, we obtain
\[
-\int_{\mathbb{H}^N} |u|^p \ln \|u\|_p \, dV_{\mathbb{H}}
+ \int_{\mathbb{H}^N} |u|^p \ln |u| \, dV_{\mathbb{H}}
\leq \frac{s}{s-p}\,\|u\|_p^p
\ln\!\left(\frac{\|u\|_s}{\|u\|_p}\right).
\]
Equivalently, this can be written in the compact form
\begin{equation}\label{1steqn}
\int_{\mathbb{H}^N}
\ln\!\left(\frac{|u|}{\|u\|_p}\right)
|u|^p \, dV_{\mathbb{H}}
\leq
\frac{s}{s-p}\,\|u\|_p^p
\ln\!\left(\frac{\|u\|_s}{\|u\|_p}\right),
\end{equation}
valid for all exponents $1 \leq p < s < \infty$. Assume henceforth that $\|u\|_p = 1$, with $1 \leq p < N$, and set
$s = \frac{Np}{N-p}.$ Applying inequality \eqref{1steqn} together with the Sobolev inequality on the hyperbolic space $\mathbb{H}^N$, we obtain
\begin{align}
\int_{\mathbb{H}^N} |u|^p \ln |u| \, dV_{\mathbb{H}}
&\leq \frac{N}{p} \ln \|u\|_s \nonumber \\
&\leq \frac{N}{p^2}
\ln\!\left(
\frac{1}{S(N,p)^p}
\int_{\mathbb{H}^N} |\nabla_{\mathbb{H}} u|^p \, dV_{\mathbb{H}}
\right).
\end{align}
Here $S(N,p)$ denotes the sharp constant in the Euclidean $L^p$–Sobolev inequality on $\mathbb{R}^N$. It is worth emphasizing that the constant $1/S(N,p)^p$ appearing above is not optimal in the logarithmic Sobolev setting. Indeed, as shown in \cite{PD03}, although $1/S(N,p)^p$ is asymptotically equivalent to the constant $\mathcal{L}_{N,p}$ as $N \to \infty$, it is strictly larger than $\mathcal{L}_{N,p}$ for every finite dimension $N$.
\medskip

In \cite{H18}, Nguyen proved a logarithmic $L^{p}$–version of the Poincaré–Sobolev inequality on the hyperbolic space $\mathbb{H}^{N}$. The result holds for $N \geq 4$ and $\frac{2N}{N-1} \leq p <N$. More precisely, under the normalization $\|u\|_{p}=1$, Nguyen showed that
\begin{equation}\label{logS1}
\int_{\mathbb{H}^N} |u|^{p} \ln |u| \, dV_{\mathbb{H}}
\leq \frac{N}{p^{2}}
\ln\!\left[
\mathcal{L}_{N,p}
\left(
\|\nabla_{\mathbb{H}} u\|_{p}^{p}
- \left(\frac{N-1}{p}\right)^{p}
\right)
\right].
\end{equation}
This inequality establishes the optimal logarithmic Sobolev inequality $(L_{p,\mathrm{opt}})$ in the above range of dimensions and exponents.
Nevertheless, several natural and mathematically significant cases remain beyond the scope of \eqref{logS1}. In particular, the three-dimensional case $N=3$ is not covered. Moreover, the endpoint case $p=2$—which is arguably the most fundamental and widely studied setting—remains open in this framework.

The investigation of logarithmic Sobolev-type inequalities on Riemannian manifolds is a relatively recent development. Nevertheless, it has long been understood that both the spectral gap and the logarithmic Sobolev constant admit quantitative lower and upper bounds in terms of geometric data of the manifold, such as the dimension, the diameter, and lower bounds on the Ricci curvature. For instance, Balogh, Krist{\'a}ly, and Tripaldi \cite{BKT} established sharp logarithmic Sobolev inequalities in the framework of metric measure spaces satisfying the curvature--dimension condition $CD(0,N)$. Using an $L^{1}$-optimal transport approach, Cavalletti and Mondino \cite{CM} proved the sharp L{\'e}vy--Gromov isoperimetric inequality, as well as sharp Sobolev and logarithmic Sobolev inequalities, on compact metric measure spaces satisfying the curvature--dimension condition $CD(K,N)$ with $K>0$. Notably, recent and influential contributions to the analysis of logarithmic Sobolev-type inequalities on Riemannian manifolds in noncompact and nonconvex settings have been made by Feng-Yu Wang \cite{FW-97,FW-09,FW-209}. We also refer the interested reader to \cite{Fang99, GGM05, MS14, FW-99}, among others, for additional results.

\medskip 

 In this paper, we investigate the logarithmic Sobolev--type inequality $(L_{p,\mathrm{opt}})$ for $p \geq 2$ within the framework of the AB program. This problem is subtle and appears to be new in the existing literature. The second main result of this paper addresses this question and provides new insights into the validity and optimality of logarithmic Sobolev inequalities in this setting. Our first result in this direction can be stated as follows:

\begin{theorem}\label{T1}
Let \( 2 \leq p < N \). There exists a constant \( \lambda(N, p) > 0 \) such that for any 
function \( u \in W^{1,p}(\mathbb{H}^N) \) with 
\( \|u\|_p = 1 \), the following inequality holds:
\[
\int_{\mathbb{H}^N} |u|^p \ln|u|\, dV_\mathbb{H}
\leq
\frac{N}{p^2} \ln \left[
\mathcal{L}_{N,p}
\left(
\int_{\mathbb{H}^N} |\nabla_\mathbb{H} u|^p\, dV_\mathbb{H}
-
\lambda(N, p)
\int_{\mathbb{H}^N} |u|^p\, dV_\mathbb{H}
\right)
\right].
\]

\end{theorem}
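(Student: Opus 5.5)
The approach is symmetrization: transfer the problem to $\mathbb{R}^N$ and apply the sharp Euclidean $L^p$-logarithmic Sobolev inequality \eqref{EplogS}. By density and the fact that $|u|$ has the same gradient norm a.e., we may assume $u\ge 0$ is smooth with compact support. Let $u^\sharp$ be its hyperbolic symmetric decreasing rearrangement, so $u^\sharp(x)=\phi(\rho(x))$ with $\phi$ nonincreasing. Rearrangement is equimeasurable in $dV_{\mathbb{H}}$, so $\|u^\sharp\|_p=1$ and $\int|u^\sharp|^p\ln|u^\sharp|\,dV_{\mathbb{H}}=\int u^p\ln u\,dV_{\mathbb{H}}$. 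The hyperbolic P\'olya--Szeg\H{o} inequality gives $\int|\nabla_{\mathbb{H}}u^\sharp|^p\le\int|\nabla_{\mathbb{H}}u|^p$. It therefore suffices to prove the theorem for radial nonincreasing $u$.

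For radial $u$, define a Euclidean radial function $v$ on $\mathbb{R}^N$ by matching volumes: $v(y)=\phi(\rho)$ when $|B^{\mathbb{R}^N}_{|y|}|=V_{\mathbb{H}}(B_\rho)$, that is, $\omega_N|y|^N=N\omega_N\int_0^\rho\sinh^{N-1}t\,dt$. Then $v$ and $u$ are equimeasurable, so $\|v\|_{L^p(dx)}=1$ and the entropies agree. Applying \eqref{EplogS} to $v$ gives
\[
\int_{\mathbb{H}^N}u^p\ln u\,dV_{\mathbb{H}}\le\frac{N}{p^2}\ln\Big[\mathcal{L}_{N,p}\int_{\mathbb{R}^N}|\nabla v|^p\,dx\Big].
\]
It remains to establish the key comparison, which is the improved P\'olya--Szeg\H{o} inequality mentioned in the abstract:
\[
\int_{\mathbb{R}^N}|\nabla v|^p\,dx\le\int_{\mathbb{H}^N}|\nabla_{\mathbb{H}}u|^p\,dV_{\mathbb{H}}-\lambda(N,p)\int_{\mathbb{H}^N}|u|^p\,dV_{\mathbb{H}} .
\]
With this in hand, monotonicity of $\ln$ finishes the proof.

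For the comparison, write both sides in the volume variable $s$, with $u=f(s)$ nonincreasing. A direct computation gives
\[
\int|\nabla_{\mathbb{H}}u|^p=\int_0^\infty|f'(s)|^p\,\mathcal{I}_{\mathbb{H}}(s)^p\,ds,\qquad
\int|\nabla v|^p=\int_0^\infty|f'(s)|^p\,\mathcal{I}_{\mathbb{R}}(s)^p\,ds,
\]
where $\mathcal{I}_{\mathbb{H}}(s)=N\omega_N\sinh^{N-1}\rho(s)$ and $\mathcal{I}_{\mathbb{R}}(s)=N\omega_N^{1/N}s^{(N-1)/N}$ are the isoperimetric profiles. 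Since $\mathcal{I}_{\mathbb{H}}>\mathcal{I}_{\mathbb{R}}$, the difference is $\int|f'|^p(\mathcal{I}_{\mathbb{H}}^p-\mathcal{I}_{\mathbb{R}}^p)\,ds$. We need to bound this below by $\lambda\int_0^\infty|f|^p\,ds$, which is a one-dimensional weighted Hardy inequality. Because $p\ge2$, the elementary inequality $a^p-b^p\ge(a^2-b^2)^{p/2}$ for $a\ge b\ge0$ reduces the task to controlling the weight $W(s)=(\mathcal{I}_{\mathbb{H}}^2-\mathcal{I}_{\mathbb{R}}^2)^{p/2}$. The Hardy inequality $\int_0^\infty|f'|^pW\ge\lambda\int_0^\infty|f|^p$ for $f$ vanishing at infinity holds, by Muckenhoupt's criterion, as soon as $\sup_r r\,\big(\int_0^r W^{-1/(p-1)}\big)^{p-1}<\infty$ (integrating from the origin, since $f(\infty)=0$).

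The main obstacle is verifying this weight condition at both ends. As $s\to\infty$, $\mathcal{I}_{\mathbb{H}}\sim(N-1)s$, which dominates $s^{(N-1)/N}$, so $W\gtrsim s^p$; the tail then behaves like the sharp hyperbolic Poincar\'e inequality and is harmless. As $s\to0$, $\sinh^{N-1}\rho-\rho^{N-1}\sim c\rho^{N+1}$ with $\rho\sim s^{1/N}$, so $\mathcal{I}_{\mathbb{H}}^2-\mathcal{I}_{\mathbb{R}}^2\sim c\,s^{2(N-1)/N}s^{2/N}=c\,s^2$. Hence $W\sim s^p$ near $0$ as well, and one should check that $\int_0^rW^{-1/(p-1)}$ converges there. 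Since $W^{-1/(p-1)}\sim s^{-p/(p-1)}$ with $p/(p-1)>1$, it does not converge at $0$ and needs more care. I would instead use the Hardy inequality in the other direction, $\int|f'|^ps^p\,ds\ge\big(\tfrac{1}{p-1}\big)^p\cdots$, or equivalently the classical $\int_0^\infty|f'|^ps^p\ge\frac{1}{p^p}\int|f|^p$ for $f(\infty)=0$, obtained by integration by parts of $\int(f^p)'s$. Since $W\ge c\,s^p$ uniformly on $(0,\infty)$ by the two asymptotics and continuity, this yields $\lambda=c/p^p>0$ and completes the proof. Checking the uniform lower bound $W\ge cs^p$ for all $s$, via the explicit profiles, is the main computation.
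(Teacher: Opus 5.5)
Your proposal is correct and is essentially the paper's proof: symmetrization with the decomposition \eqref{SRI}, a lower bound $k_{N,p}(s)\ge C(N,p)\,s^p$ (Lemmas \ref{L2I}, \ref{LpI}), which you obtain, as the paper does for $p>2$, from positivity and the limits at $0$ and $\infty$, then the one-dimensional Hardy inequality $\int_0^\infty|v'|^p s^p\,ds\ge p^{-p}\int_0^\infty|v|^p\,ds$ to get Lemma \ref{keyLem}, and finally the Del Pino--Dolbeault inequality \eqref{EplogS} applied to $u_e^\sharp$. Your reduction $a^p-b^p\ge(a^2-b^2)^{p/2}$ is a nice shortcut: combined with Lemma \ref{L2I} it gives the explicit $\lambda(N,p)=\bigl(\tfrac{N^2(N-1)}{N+2}\bigr)^{p/2}p^{-p}$, which equals $\tfrac{N^2(N-1)}{4(N+2)}$ at $p=2$. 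Two small corrections are needed, however: at small $s$ you must compare $\sinh^{N-1}\rho$ with the equal-volume Euclidean quantity $\Phi(\rho)^{(N-1)/N}$, not with $\rho^{N-1}$, because $\Phi(\rho)^{(N-1)/N}-\rho^{N-1}$ is itself of order $\rho^{N+1}$, and the correct leading coefficient is $\tfrac{N-1}{2(N+2)}>0$; and the Muckenhoupt condition you first wrote has its range reversed (for $f(\infty)=0$ it involves $\int_r^\infty W^{-1/(p-1)}$, which is finite here), though your final integration-by-parts argument makes it unnecessary.
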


Moreover, for \( p = 2 \), one may choose 
\[
\lambda(N, 2) = \frac{N^2 (N-1)}{4(N+2)}.
\] 
Therefore, we have

\begin{theorem}\label{T1.1}
For any \( u \in W^{1,2}(\mathbb{H}^N) \) such that 
\( \|u\|_{L^2(\mathbb{H}^N)} = 1 \), it holds that
\[
\int_{\mathbb{H}^N} |u|^2 \ln |u| \, dV_{\mathbb{H}} 
\leq \frac{N}{4} \ln \left[ \frac{2}{\pi Ne} \left(
\int_{\mathbb{H}^N} |\nabla_{\mathbb{H}} u|^2 \, dV_{\mathbb{H}} 
- \frac{N^2 (N-1)}{4 (N+2)} \int_{\mathbb{H}^N} |u|^2 \, dV_{\mathbb{H}}
\right)
\right].
\]

\end{theorem}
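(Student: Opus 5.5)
Theorem~\ref{T1.1} is the case $p=2$ of Theorem~\ref{T1} with the explicit choice $\lambda(N,2)=\frac{N^2(N-1)}{4(N+2)}$, once we check that $\mathcal{L}_{N,2}=\frac{2}{\pi N e}$. The plan is therefore: (i) verify the constant; (ii) reduce to radial decreasing functions by symmetrization; (iii) transfer the problem to $\mathbb{R}^N$, where the Euclidean inequality \eqref{ElogS} applies; (iv) absorb the resulting error term with a quantitative improved P\'olya--Szeg\"o comparison.

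\textbf{Constant.} Setting $p=2$ in the formula for $\mathcal{L}_{N,p}$ gives
\[
\mathcal{L}_{N,2}=\frac{2}{N}\pi^{-1}e^{-1}\Bigl(\frac{\Gamma(\frac N2+1)}{\Gamma(\frac N2+1)}\Bigr)^{2/N}=\frac{2}{\pi N e},
\]
so the stated inequality is exactly $(L_{2,\mathrm{opt}})$ with $B=\lambda(N,2)\|u\|_2^2$.

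\textbf{Symmetrization and transfer.} By density and the replacement $u\mapsto |u|$, we may take $u\ge0$ smooth with compact support. Let $u^\sharp$ be the hyperbolic symmetric decreasing rearrangement. It preserves $\int u^2\,dV_{\mathbb{H}}$ and $\int u^2\ln u\,dV_{\mathbb{H}}$, and by P\'olya--Szeg\"o it does not increase the Dirichlet energy. Hence it suffices to treat $u=u(\rho)$ radial and decreasing. Next, define the Euclidean rearrangement $v(x)=u(\rho)$ with $|\mathbb B^{\mathbb H}_\rho|=|B_{r}|_{\mathbb R^N}$, that is, $\omega_N r^N/N = V(\rho) := \int_{B_\rho}dV_{\mathbb H}$. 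This map is equimeasurable, so $v$ has the same $L^2$ norm and the same entropy $\int v^2\ln v$. Applying \eqref{ElogS} to $v$ gives
\[
\int u^2\ln u\,dV_{\mathbb H}\le \frac N4\ln\Bigl(\frac{2}{\pi Ne}\int_{\mathbb R^N}|\nabla v|^2dx\Bigr).
\]
It therefore remains to prove the improved P\'olya--Szeg\"o comparison
\[
\int_{\mathbb R^N}|\nabla v|^2dx\le \int_{\mathbb H^N}|\nabla_{\mathbb H}u|^2dV_{\mathbb H}-\lambda(N,2)\int_{\mathbb H^N}u^2dV_{\mathbb H}.
\]

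\textbf{The comparison (main obstacle).} Write both energies in the volume variable $s$. This gives
\[
\int_{\mathbb H}|\nabla_{\mathbb H}u|^2=\int_0^\infty |u'(s)|^2 P_{\mathbb H}(s)^2\,ds,
\qquad
\int_{\mathbb R^N}|\nabla v|^2=\int_0^\infty|u'(s)|^2P_{\mathbb R}(s)^2\,ds,
\]
where $P$ denotes the perimeter of the ball of volume $s$. Since $P_{\mathbb H}(s)^2-P_{\mathbb R}(s)^2\ge0$ by the isoperimetric comparison, the difference of the two energies is a weighted Hardy-type quadratic form
\[
\int_0^\infty |u'(s)|^2\bigl(P_{\mathbb H}^2-P_{\mathbb R}^2\bigr)\,ds .
\]
We must bound this from below by $\lambda\int_0^\infty u(s)^2\,ds$ for decreasing $u$ vanishing at infinity.

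The approach is a one-dimensional weighted Hardy inequality. Write $u(s)=-\int_s^\infty u'$. I would try the Muckenhoupt-type criterion, or directly a ground-state substitution $u=\phi w$ with an explicit supersolution $\phi$, which requires
\[
-\bigl((P_{\mathbb H}^2-P_{\mathbb R}^2)\phi'\bigr)'\ge\lambda\phi .
\]
This leads to estimating the weight explicitly: $P_{\mathbb H}=N\omega_N\sinh^{N-1}\rho$ and $s=N\omega_N\int_0^\rho\sinh^{N-1}$, with $P_{\mathbb R}=N\omega_N^{1/N}s^{(N-1)/N}$.

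Near $s=0$, expansion gives
\[
P_{\mathbb H}^2-P_{\mathbb R}^2\approx c\,s^{2(N-1)/N}\rho^2\sim c' s^{2}\cdot s^{2/N-2/N}\dots,
\]
whose leading coefficient should produce the $N+2$ in the denominator of $\lambda(N,2)$ (through $\int_0^\rho \sinh^{N+1}$-type terms). For large $s$, $P_{\mathbb H}\sim (N-1)s$, which gives a weight of order $(N-1)^2s^2$ and a Hardy constant of order $(N-1)^2/4$. I expect the main difficulty to be a clean global lower bound of the form
\[
P_{\mathbb H}(s)^2-P_{\mathbb R}(s)^2\ge c_N s^2,
\]
followed by the classical Hardy inequality $\int s^2|u'|^2\ge\frac14\int u^2$. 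Matching the specific value $\frac{N^2(N-1)}{4(N+2)}$ suggests $c_N=\frac{N^2(N-1)}{N+2}$. I would first try to prove this by differentiating in $\rho$ and using convexity and monotonicity of $\sinh$-integrals.
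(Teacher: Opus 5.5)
Your outline coincides with the paper's proof. The steps are: the value $\mathcal{L}_{N,2}=\frac{2}{\pi Ne}$; invariance of the $L^2$ norm and of the entropy under rearrangement (the paper checks the entropy by a layer-cake computation split at $|u|=1$, since $t^2\ln t$ changes sign); the Euclidean inequality \eqref{ElogS} applied to $u_e^\sharp$; and the identity \eqref{SRI}, where your $P_{\mathbb H}(s)^2-P_{\mathbb R}(s)^2$ is exactly $(N\sigma_N)^2k_{N,2}(s/\sigma_N)$. The last step is the Hardy inequality $\int_0^\infty s^2|v'|^2\,ds\ge\frac14\int_0^\infty v^2\,ds$, via $w=vs^{1/2}$. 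Your guessed constant is also correct: $P_{\mathbb H}^2-P_{\mathbb R}^2\ge\frac{N^2(N-1)}{N+2}s^2$ is equivalent to $k_{N,2}(s)\ge\frac{N-1}{N+2}s^2$, which is Lemma~\ref{L2I}.

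The gap is that this global bound is not proved, yet it is the only nontrivial ingredient and the sole source of the constant $\frac{N^2(N-1)}{4(N+2)}$. You read $c_N$ off the target. The endpoint behaviour does not rescue this: the ratio $(P_{\mathbb H}^2-P_{\mathbb R}^2)/s^2$ tends to $\frac{N^2(N-1)}{N+2}$ as $s\to0$ and to $(N-1)^2$ as $s\to\infty$, which does not exclude a dip below the $s\to0$ value at intermediate $s$. A Muckenhoupt-type bound would also lose a constant factor. As written, your argument (together with $\sinh^Nt>\Phi(t)$, which follows from $\cosh t>1$) yields only some non-explicit $\lambda(N,2)>0$, i.e. the case $p=2$ of Theorem~\ref{T1}, not Theorem~\ref{T1.1}.

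The paper closes the gap with three nested monotonicity arguments in $t=\rho$:
\begin{enumerate}
\item Set $F(t)=\sinh^{2(N-1)}t-\Phi^{\frac{2(N-1)}{N}}-\frac{N-1}{N+2}\Phi^2$. Then $F(0)=0$ and $F'=2(N-1)\sinh^{N-1}t\,G$, where $G=\sinh^{N-2}t\cosh t-\Phi^{\frac{N-2}{N}}-\frac{N}{N+2}\Phi$ and $G(0)=0$.
\item Using $\cosh^2=1+\sinh^2$, one finds $G'=(N-2)\sinh^{N-1}t\,\bigl(\sinh^{-2}t+\frac{1}{N+2}-\Phi^{-2/N}\bigr)$. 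So $G'\ge0$ is equivalent to $\Phi(t)\ge\bigl(\sinh^2t/(1+\frac{\sinh^2t}{N+2})\bigr)^{N/2}$.
\item Both sides of that inequality vanish at $t=0$. Comparing derivatives reduces it to $\bigl(1+\frac{\sinh^2t}{N+2}\bigr)^{N+2}\ge1+\sinh^2t=\cosh^2t$, which is Bernoulli's inequality.
\end{enumerate}
Without this (or an equivalent) argument, the explicit constant in the statement is not established.
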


In particular, when $N=3$, our result implies that for any \( u \in W^{1,2}(\mathbb{H}^3) \) such that 
\( \|u\|_{L^2(\mathbb{H}^3)} = 1 \), it holds that
\[
\int_{\mathbb{H}^N} |u|^2 \ln |u| \, dV_{\mathbb{H}} 
\leq \frac{3}{4} \ln \left[ \frac{2}{3\pi e}\int_{\mathbb{H}^3} uP_1 udV_{\mathbb{H}}
\right].
\]
Here $P_1$ is the conformal Laplacian on $\mathbb{H}^3$. Note that, unlike the power-type Sobolev case, the conformal covariance proof via GJMS operators does not extend to this logarithmic setting.

As mentioned earlier, in the Euclidean case, the \( L^2 \) logarithmic Sobolev inequality is equivalent to the Gaussian logarithmic Sobolev inequality. Following this philosophy, our next main purpose is to use the $L^2$-logarithmic Sobolev inequality (Theorem \ref{T1.1}) to establish a logarithmic Sobolev inequality with Gaussian measures on hyperbolic spaces. More precisely, we have
\begin{theorem}\label{T2}
    For any $u \in C^\infty_0(\mathbb{H}^N)$, we have
    \begin{align}\label{gnrGaussLogSob}
    &\int_{\mathbb{H}^N} u^2 \log(u^2)dm -\log\left(\int_{\mathbb{H}^N} u^2 dm\right)\int_{\mathbb{H}^N} u^2 dm \nonumber\\
    &\leq 2\int_{\mathbb{H}^N} |\nabla_{\mathbb{H}}u|^2 dm+\int_{\mathbb{H}^N} \left[(N-1)\left(\rho\coth \rho-1\right)-\dfrac{N^2(N-1)}{2(N+2)}+\log\left(C_2\right)\right]u^2dm,
\end{align}
where
\begin{equation}\label{C2}
    C_2:=(\mathcal{L}_{N,2})^{N/2}\dfrac{\sqrt{2 \pi}}{2^{N-1}}e^{\frac{(N-1)^2}{2}}\omega_{N-1} \left(\dfrac{Ne}{4}\right)^{N/2}.
\end{equation}
\end{theorem}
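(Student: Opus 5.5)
The plan is to reduce the Gaussian-type inequality to the $L^2$-logarithmic Sobolev inequality of Theorem \ref{T1.1} by a ground-state substitution, as in the Euclidean equivalence between \eqref{ElogS} and \eqref{GlogS}. I take $dm = Z^{-1}e^{-\rho^2/2}\,dV_{\mathbb{H}}$, the hyperbolic Gaussian probability measure, with $Z=\int_{\mathbb{H}^N}e^{-\rho^2/2}dV_{\mathbb{H}}$. (If $m$ is meant unnormalized, only the bookkeeping of $\log Z$ below changes.) Given $u\in C_0^\infty(\mathbb{H}^N)$, set
\[
f = Z^{-1/2}\,u\,e^{-\rho^2/4},
\]
so that $f^2\,dV_{\mathbb{H}} = u^2\,dm$ and $\|f\|_2^2=\int u^2dm$.

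First, by homogeneity, Theorem \ref{T1.1} gives, for nonnormalized $f$,
\[
\int f^2\ln\frac{f^2}{\|f\|_2^2}\,dV_{\mathbb{H}}\le \frac N2\|f\|_2^2\ln\Big[\frac{2}{\pi Ne}\,\frac{\int|\nabla_{\mathbb{H}}f|^2 - \lambda\int f^2}{\|f\|_2^2}\Big],
\qquad \lambda=\tfrac{N^2(N-1)}{4(N+2)}.
\]
Next I linearize the logarithm using $\ln X\le aX-1-\ln a$. I choose $a$ so that the coefficient of $\int|\nabla_{\mathbb{H}} f|^2$ becomes exactly $2$, namely $a=\frac{4}{N}\cdot\frac{\pi Ne}{2}\cdot\mathcal{L}_{N,2}$, with $\mathcal L_{N,2}=\frac{2}{\pi N e}$ so that everything is expressed through $\mathcal L_{N,2}$. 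This produces the term $-2\lambda\int f^2=-\frac{N^2(N-1)}{2(N+2)}\int f^2$, together with a constant $-\frac N2(1+\ln a)$. The factor $(\mathcal L_{N,2})^{N/2}(Ne/4)^{N/2}$ in $C_2$ should come from this constant.

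Then I rewrite everything in terms of $u$. On the left, $\ln f^2=\ln u^2-\rho^2/2-\ln Z$. For the gradient, with $\varphi=e^{-\rho^2/4}$, expand
\[
|\nabla_{\mathbb{H}}(u\varphi)|^2=\varphi^2|\nabla_{\mathbb{H}} u|^2+u^2|\nabla_{\mathbb{H}}\varphi|^2+\tfrac12\nabla_{\mathbb{H}}(u^2)\cdot\nabla_{\mathbb{H}}(\varphi^2).
\]
The cross term is integrated by parts using the divergence theorem together with $|\nabla_{\mathbb{H}}\rho|=1$ and $\Delta_{\mathbb{H}}\rho=(N-1)\coth\rho$. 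Since $\Delta_{\mathbb{H}} e^{-\rho^2/2} = e^{-\rho^2/2}\big(\rho^2-1-(N-1)\rho\coth\rho\big)$, the potential produced is
\[
\tfrac{\rho^2}{4}-\tfrac14\big(\rho^2-1-(N-1)\rho\coth\rho\big)\cdot\ldots
\]
After multiplying by $2$, the $\rho^2$ terms should cancel exactly against the $-\rho^2/2$ coming from the left-hand side. What remains is $(N-1)(\rho\coth\rho-1)$, up to the constant $N-1$, which I expect to combine with the $-1$'s.

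Finally, the $\ln Z$ terms must be controlled by an upper bound on $Z$. Using $\sinh\rho\le e^\rho/2$,
\[
Z=\omega_{N-1}\int_0^\infty e^{-\rho^2/2}\sinh^{N-1}\rho\,d\rho\le \frac{\omega_{N-1}}{2^{N-1}}\int_{\mathbb{R}}e^{-\rho^2/2+(N-1)\rho}d\rho=\frac{\omega_{N-1}\sqrt{2\pi}}{2^{N-1}}e^{(N-1)^2/2},
\]
which matches the remaining factors of $C_2$. The main obstacle is the bookkeeping: getting the exact constants to collapse into $\log C_2$ and verifying that the $\rho^2$ terms cancel with the correct signs. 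I would check this first on radial $u$. I would also confirm that the direction of the $Z$ bound is the favorable one, i.e. that $\ln Z$ enters with a positive sign on the right-hand side.
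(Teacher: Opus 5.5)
Your proposal is correct and follows the paper's proof essentially step for step: the substitution $v=u\sqrt{\rho_1}$ (your $f$) reducing to Theorem~\ref{T1.1}, the tangent-line bound on the logarithm with the parameter chosen so the gradient coefficient is exactly $2$ (the paper's $\alpha=4e/N$), the integration by parts of the cross term via $\Delta_{\mathbb{H}}(\rho^2/2)=1+(N-1)\rho\coth\rho$ so that the $\rho^2$ terms cancel, and the Gaussian bound on $G$ from $\sinh\rho\le e^{\rho}/2$, which indeed enters with the favorable sign. On the bookkeeping you flagged: the Laplacian term enters with coefficient $\tfrac12$, giving the potential $-\tfrac{\rho^2}{4}+\tfrac12\bigl(1+(N-1)\rho\coth\rho\bigr)$, and the leftover constant is $N$, not $N-1$ (since $1+(N-1)\rho\coth\rho=(N-1)(\rho\coth\rho-1)+N$); this $N$ combines with the $-\tfrac N2$ from the linearization (where $a=4/(N\mathcal{L}_{N,2})$ when $\mathcal{L}_{N,2}$ sits inside the logarithm, not the expression you wrote, which equals $4/N$) to turn $\bigl(N\mathcal{L}_{N,2}/(4e)\bigr)^{N/2}$ into the factor $\bigl(Ne\,\mathcal{L}_{N,2}/4\bigr)^{N/2}$ of $C_2$.
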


Here the weighted Gaussian measure \( dm \) is defined by
\[
dm = G^{-1} e^{-\frac{\rho^2}{2}} dV_{\mathbb{H}}, \quad 
G := \int_{\mathbb{H}^N} e^{-\frac{\rho^2}{2}} dV_{\mathbb{H}}, \quad 
\rho = \rho(x) = d_{\mathbb{H}}(x, 0) = \ln\left(\frac{1+|x|}{1 - |x|}\right),
\]
and  \(\omega_{N-1}\) is the area of the unit sphere $\mathbb{S}^{N-1} \subset \mathbb{R}^N$.

Notably, the constant $2$ on the right-hand side of \eqref{gnrGaussLogSob} is the sharp constant in the $L^2$-logarithmic Sobolev inequality for the standard Gaussian measure in Euclidean space. To our knowledge, \eqref{gnrGaussLogSob} is the first result achieving this Euclidean sharp constant for a logarithmic Sobolev inequality with Gaussian-type measure on hyperbolic space.


\medskip 

The paper is organized as follows.
\begin{itemize}

\item[Section 1:] The introduction offers a brief overview of Sobolev and logarithmic Sobolev inequalities in Euclidean and manifold settings, framed within the $AB$ program. We then present the paper's main results on hyperbolic space.

\item[Section 2:] We collect the key technical tools for the proofs, recalling symmetric decreasing rearrangements on $\mathbb{H}^N$ and establishing several lemmas that yield a non-trivial improvement of the P\'{o}lya--Szeg\"o inequality on hyperbolic space.

\item[Section 3:] We prove the Poincar\'e--Sobolev, the Poincar\'e--Gagliardo--Nirenberg and the Poincar\'e--logarithmic Sobolev inequalities  on $\mathbb{H}^N$, with detailed proofs of Theorem~\ref{pSob} and Theorem~\ref{T1}.

\item[Section 4:] This section addresses Gaussian logarithmic Sobolev inequalities, proving Theorem~\ref{T2}, and studies generalized Poincar\'{e} inequalities on $\mathbb{H}^N$ with Gaussian-type measures.

\item[Section 5:] Motivated by the fact that the Beckner inequality interpolates between the Poincar\'e and Gross logarithmic Sobolev inequalities, we establish an extended Beckner inequality on $\mathbb{H}^N$ with a suitably modified measure.

\item[Section 6:] Finally, we extend key results—such as improved P\'{o}lya--Szeg\"o and logarithmic Sobolev inequalities—to Riemannian model manifolds satisfying the centered isoperimetric inequality.

\end{itemize}

\medskip


\section{Symmetric decreasing rearrangements: Improvement of P\'{o}lya--Szeg\"o principle}\label{sec-2}

This section begins by recalling the notion of rearrangements on the hyperbolic space $\mathbb{H}^N$ and the fundamental measure-theoretic principles underlying symmetrization. We then review the classical P\'{o}lya--Szeg\"o inequality on $\mathbb{H}^N$, which plays a central role in the analysis of functional inequalities on curved spaces. Building on this framework, and through a series of delicate and careful computations, we establish a refined version of the P\'{o}lya--Szeg\"o inequality on the hyperbolic space, yielding an improvement over the classical result.
\medskip

Let $u:\mathbb{H}^N \rightarrow \mathbb{R}$ be a function such that

$$\Vol_g\left(\{x \in \mathbb{H}^N:\left\vert u(x)\right\vert>t\}    \right) = \int_{\{x \in \mathbb{H}^N:\left\vert u(x)\right\vert>t\}} dV_{\mathbb{H}} < \infty,~~~\forall t>0. $$
For such a function $u$, its distribution function, denoted by $\mu_u$, is defined by
$$\mu_u \left( t\right)=\Vol_g\left(\{x \in \mathbb{H}^N:\left\vert u(x)\right\vert>t\}    \right),~~~\forall t>0.$$
The above function is non-increasing and right-continuous. Then the decreasing rearrangement
function $u^*$ of $u$ is defined by

$$u^*\left( t\right)=\sup\{s>0:\mu_u\left( s\right)>t\}.$$
It is easy to see that the function $u^*\left( t\right)$ is non-increasing. We then define the symmetric decreasing
rearrangement function $u_g^{\sharp}$ of $u$ by

$$u_g^{\sharp}\left( x\right)=u^*\left( \Vol_g\left(B_g\left(0,\rho\left(x\right)\right)\right)\right),~~~x\in \mathbb{H}^N$$
and the Euclidean symmetric decreasing
rearrangement function $u_e^{\sharp}$ of $u$ by

$$u_e^{\sharp}\left( x\right)=u^*\left( \sigma_N \left\vert x\right\vert^N\right),~~~x\in \mathbb{R}^N.$$
Then, we have by the layer cake representation that for any nondecreasing function $F:\left[0,\infty \right) \rightarrow \left[0,\infty \right)$ with $F\left(0\right)=0$:

$$\int_{\mathbb{H}^N}F\left(\left\vert u \right\vert \right)dV_{\mathbb{H}}=\int_{\mathbb{H}^N}F\left(u_g^{\sharp}\right)dV_{\mathbb{H}}=\int_{\mathbb{R}^N}F\left(u_e^{\sharp}\right)dx=\int_{0}^{\infty}F\left(u^{*}\left(t\right)  \right)dt.$$

\subsection{P\'{o}lya-Szeg\"o principle and its improvement}

\medskip

 The classical P\'{o}lya--Szeg\"o principle on $\mathbb{H}^N$, the symmetric decreasing rearrangement $u_g^\sharp$ satisfies
\[
\int_{\mathbb{H}^N} \bigl|\nabla_g u_g^\sharp\bigr|_g^p \, dV_{\mathbb{H}}
\;\le\;
\int_{\mathbb{H}^N} \bigl|\nabla_g u\bigr|_g^p \, dV_{\mathbb{H}} .
\]
Moreover, a direct computation (see \cite{H18}) yields the precise decomposition
\begin{equation}\label{SRI}
\int_{\mathbb{H}^N} \bigl|\nabla_g u_g^\sharp\bigr|_g^p \, dV_{\mathbb{H}}
=
\int_{\mathbb{R}^N} \bigl|\nabla u_e^\sharp\bigr|^p \, dx
+
\bigl(N\sigma_N\bigr)^p
\int_0^\infty \bigl|v'(s)\bigr|^p \,
k_{N,p}\!\left(\frac{s}{\sigma_N}\right) ds ,
\end{equation}
where $v=u^*$ is the decreasing rearrangement and
\[
k_{N,p}(s)
:=
\bigl(\sinh \Phi^{-1}(s)\bigr)^{p(N-1)}
-
s^{\frac{p(N-1)}{N}},
\qquad
\Phi(t)
:=
N \int_0^t (\sinh y)^{N-1} dy ,
\]
with
\[
s = \Vol_g\!\bigl(B_g(0,t)\bigr) = \sigma_N \Phi(t).
\]

\medskip 

Assume that $p \ge 2$. We prove subsequently that there exists a constant $C(N,p)>0$ such that
\[
k_{N,p}(s) \ge C(N,p)\, s^p
\qquad \text{for all } s \ge 0 .
\]
This lower bound implies the existence of a constant $\lambda(N,p)>0$ with the property that, for every $N \ge 3$ and for all admissible functions $u$, the following inequality holds:
\[
\int_{\mathbb{H}^N} \bigl|\nabla_g u_g^\sharp\bigr|_g^p \, dV_{\mathbb{H}}
-
\lambda(N,p)
\int_{\mathbb{H}^N} |u_g^\sharp|^p \, dV_{\mathbb{H}}
\;\ge\;
\int_{\mathbb{R}^N} \bigl|\nabla u_e^\sharp\bigr|^p \, dx .
\]


To this end, we have the following lemmas:

\medskip

\begin{lemma}\label{L2I} We have
    $$k_{N,2}(\Phi(t)) \geq \dfrac{N-1}{N+2} (\Phi(t))^2,$$
for all $N \geq 3, t \geq 0$.
\end{lemma}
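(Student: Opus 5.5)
We need to show, with $\Phi(t)=N\int_0^t\sinh^{N-1}y\,dy$, that
\[
F(t):=\sinh^{2(N-1)}t-\Phi(t)^{\frac{2(N-1)}{N}}-\frac{N-1}{N+2}\Phi(t)^2\ \ge\ 0 \qquad \text{for all } t\ge 0 .
\]
The plan is a monotonicity/ODE comparison argument in the variable $t$. At $t=0$ every term vanishes, so it suffices to show $F$ is nondecreasing, or at least that $F$ stays nonnegative.

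Write $S=\sinh t$, $C=\cosh t$ and $\Phi=\Phi(t)$, so that $\Phi'=NS^{N-1}$. Then
\[
F'(t)=2(N-1)S^{2N-3}C-2(N-1)\Phi^{\frac{N-2}{N}}S^{N-1}-\frac{2N(N-1)}{N+2}\,\Phi S^{N-1}.
\]
Dividing by $2(N-1)S^{N-1}>0$, we must show
\[
G(t):=S^{N-2}C-\Phi^{\frac{N-2}{N}}-\frac{N}{N+2}\Phi\ \ge\ 0 .
\]
Again $G(0)=0$, so I would differentiate once more.

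We have $(S^{N-2}C)'=(N-2)S^{N-3}C^2+S^{N-1}=(N-1)S^{N-1}+(N-2)S^{N-3}$, using $C^2=1+S^2$. Also $(\Phi^{(N-2)/N})'=(N-2)\Phi^{-2/N}S^{N-1}$. Hence
\[
G'(t)=S^{N-3}\Bigl[(N-1)S^2+(N-2)-(N-2)\Phi^{-2/N}S^2-\tfrac{N^2}{N+2}S^2\Bigr].
\]
The constant $\frac{N}{N+2}$ is tuned so that the expansions near $t=0$ match.

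The key auxiliary estimate is a lower bound for $\Phi$. Since $\sinh y\ge y$, we get $\Phi\ge t^N$, but we need a comparison with $S$ itself. Writing $\Phi^{2/N}=S^2\,(\Phi^{2/N}/S^2)$, the bracket becomes
\[
(N-2)\bigl(1-S^2\Phi^{-2/N}\bigr)+\Bigl(N-1-\tfrac{N^2}{N+2}\Bigr)S^2 ,
\]
where $N-1-\frac{N^2}{N+2}=\frac{N-2}{N+2}$. So, for $N\ge3$, it suffices to prove
\[
\frac{S^2}{\Phi^{2/N}}\le 1+\frac{S^2}{N+2},\qquad\text{i.e.}\qquad \Phi\ \ge\ \Bigl(\frac{S^2}{1+S^2/(N+2)}\Bigr)^{N/2}.
\]

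This last inequality is the main obstacle. I would prove it by the same derivative comparison. Both sides vanish at $0$, so it is enough to compare derivatives: $NS^{N-1}$ against the derivative of $H(t):=\bigl(S^2(N+2)/(N+2+S^2)\bigr)^{N/2}$. A direct computation gives
\[
H'=N\,S^{N-1}C\,(N+2)^{N/2+1}(N+2+S^2)^{-N/2-1}.
\]
So the required inequality $NS^{N-1}\ge H'$ reduces to
\[
C\le\bigl(1+S^2/(N+2)\bigr)^{\frac{N+2}{2}},\qquad\text{i.e.}\qquad (1+x)^{1/2}\le\bigl(1+\tfrac{x}{N+2}\bigr)^{\frac{N+2}{2}}\quad\text{with } x=S^2 .
\]
This follows from Bernoulli's inequality, since $\bigl(1+\frac{x}{m}\bigr)^m\ge 1+x$ with $m=N+2$.

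Thus $G'\ge0$, hence $G\ge0$, hence $F'\ge0$, hence $F\ge0$. If the $N=3$ boundary behaviour of $S^{N-3}$ needs care, I would check it directly; otherwise all steps hold uniformly for $N\ge3$.
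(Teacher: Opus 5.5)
Your proof is correct and follows the paper's argument essentially step by step. It uses the same factorization $F'=2(N-1)\sinh^{N-1}t\,G$, the same computation of $G'$, and the same reduction of $G'\ge 0$ to the lower bound $\Phi(t)\ge\bigl(\sinh^2 t/(1+\sinh^2 t/(N+2))\bigr)^{N/2}$, proved by the same derivative comparison closed with Bernoulli's inequality $(1+x/(N+2))^{N+2}\ge 1+x$ (and for $N=3$ the factor $\sinh^{N-3}t$ is identically $1$, so no extra care is needed).
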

 Here, 
$k_{N,2}(s):=(\sinh\Phi^{-1}(s))^{2(N-1)}-s^{2(N-1)/N}$, and $\Phi(t):=N\int_0^t (\sinh s)^{N-1}ds$.
\begin{proof}
 Since $\Phi$ is a diffeomorphism, it suffices to determine $C(N)$ such that 
$$F_{N,2}(t)=k_{N,2}(\Phi(t))-C(N)(\Phi(t))^2=(\sinh t)^{2(N-1)}-\Phi(t)^{\frac{2(N-1)}{N}}-C(N)(\Phi(t))^2 \geq 0,$$
for all $t \geq 0$ and $N \geq 3$. To do that, we would like to compute
$$\displaystyle \lim_{t \rightarrow 0} \dfrac{k_{N, 2}(\Phi(t))}{\Phi^2(t)}.$$
By using the L'Hospital rule, we have
\begin{align*}
    \displaystyle \lim_{t \rightarrow 0} \dfrac{k_{N, 2}(\Phi(t))}{\Phi^2(t)}=\lim_{t \rightarrow 0} \dfrac{\left(k_{N, 2}(\Phi(t))\right)^{\prime}}{2\Phi(t) \Phi'(t)}=\dfrac{N-1}{N}\lim_{t\rightarrow 0}\dfrac{\sinh^{N-2}(t)\cosh t-(\Phi(t))^{(N-2)/N}}{\Phi(t)}.
\end{align*}
Applying the L'Hospital rule once again, we obtain
\begin{align*}
    \displaystyle \lim_{t \rightarrow 0} \dfrac{k_{N, 2}(\Phi(t))}{\Phi^2(t)}&=\dfrac{N-1}{N}\lim_{t\rightarrow 0}\dfrac{\sinh^{N-2}(t)\cosh t-(\Phi(t))^{(N-2)/N}}{\Phi(t)}\\
    &=\dfrac{(N-1)(N-2)}{N^2}\lim_{t\rightarrow 0}\left[(\coth t)^2+\dfrac{1}{N-2}-(\Phi(t))^{-2/N}\right].
\end{align*}
We have $$\coth (t)=\dfrac{1}{t}+\dfrac{1}{3}t-\dfrac{1}{45}t^3+o(t^3)=\dfrac{1}{t}(1+\dfrac{1}{3}t^2+o(t^2)),\;\text{as $t \rightarrow 0^+$},$$
then 
$$(\coth(t))^2=\dfrac{1}{t^2}\left(1+\dfrac{1}{3}t^2+o(t^2)\right)^2=\dfrac{1}{t^2}\left(1+\dfrac{2}{3}t^2+o(t^2)\right)=\dfrac{1}{t^2}+\dfrac{2}{3}+o(1),\;\text{as $t \rightarrow 0^+$}.$$
Moreover,
\begin{align*}
    \Phi(t)=N \int_0^t (\sinh s)^{N-1}ds&=N \int_0^t \left(s+\dfrac{s^3}{3!}+\dfrac{s^5}{5!}+o(s^5)\right)^{N-1}ds\\
    &=N \int_0^t s^{N-1}\left(1+\dfrac{s^2}{3!}+\dfrac{s^4}{5!}+o(s^4)\right)^{N-1}ds\\
    &=N \int_0^t s^{N-1} \left(1+\dfrac{(N-1)}{3!}s^2+o(s^2)\right)ds\\
    &=t^N+\dfrac{N(N-1)}{3!}\dfrac{1}{N+2}t^{N+2}+o(t^{N+2}),\;\text{as $t \rightarrow 0^+$},
\end{align*}
which implies that
\begin{align*}
    (\Phi(t))^{-2/N}=t^{-2}\left(1+\dfrac{N(N-1)}{3!(N+2)}t^2+o(t^2)\right)^{-2/N}&=t^{-2}\left(1-\dfrac{2}{N}\dfrac{N(N-1)}{3!(N+2)}t^2+o(t^2)\right)\\
    &=t^{-2}-\dfrac{1}{3}\dfrac{N-1}{N+2}+o(1),
\end{align*}
as $t \rightarrow 0^+$. Thus,
$$\lim_{t \rightarrow 0} \dfrac{k_{N, 2}(\Phi(t))}{\Phi^2(t)}=\dfrac{(N-1)(N-2)}{N^2} \dfrac{N^2}{(N-2)(N+2)}=\dfrac{N-1}{N+2}.$$

Hence, next, we would like to prove that
$$k_{N,2}(\Phi(t)) \geq \dfrac{N-1}{N+2} (\Phi(t))^2,$$
for all $N \geq 3, t \geq 0$, which is equivalent to $F_{N,2}(t) \geq 0$.

By a direct computation, we have
\begin{align*}
    F'_{N,2}(t)&=2(N-1)(\sinh t)^{2N-3}\cosh t\\
    &-2(N-1) (\Phi(t))^{(N-2)/N}(\sinh t)^{N-1}-2\dfrac{N(N-1)}{N+2}\Phi(t)(\sinh t)^{N-1}\\
    &=2(N-1)(\sinh t)^{N-1}\left[(\sinh t)^{N-2}\cosh t-(\Phi(t))^{(N-2)/N}-\dfrac{N}{N+2}\Phi(t)\right]\\
    &:=2(N-1)(\sinh t)^{N-1} G(t).
\end{align*}
Then, 
\begin{align*}
    G'(t)&=(N-2)(\sinh t)^{N-3}(\cosh t)^2+(\sinh t)^{N-1}\\
    &-(N-2)(\Phi(t))^{-2/N}(\sinh t)^{N-1}-\dfrac{N^2}{N+2}(\sinh t)^{N-1}\\
    &=\dfrac{N-2}{N+2}(\sinh t)^{N-1}+(N-2)(\sinh t)^{N-3}-(N-2)(\Phi(t))^{-2/N}(\sinh t)^{N-1}.
\end{align*}
We claim that $G'(t) \geq 0$ for all $t \geq 0$. If so, $G(t) \geq G(0)=0$, which implies that $F'_{N,2}(t) \geq 0$ and $F_{N,2}(t) \geq F_{N,2}(0)=0$. In fact, it is enough to prove
$$\Phi(t) \geq \left(\dfrac{(\sinh t)^2}{\frac{(\sinh t)^2}{N+2}+1}\right)^{N/2}.$$
Define $H(t):=\Phi(t)-\left(\dfrac{(\sinh t)^2}{\frac{(\sinh t)^2}{N+2}+1}\right)^{N/2}$. Thus,
\begin{align*}
    H'(t)&=N(\sinh t)^{N-1}-\dfrac{N}{2}\dfrac{2\sinh t \cosh t}{\left(\frac{(\sinh t)^2}{N+2}+1\right)^{(N+2)/2}}(\sinh t)^{N-2}\\
    &=N(\sinh t)^{N-1}-N\dfrac{\cosh t}{(\sinh t)^3}\left(\dfrac{(\sinh t)^2}{\frac{(\sinh t)^2}{N+2}+1}\right)^{(N+2)/2}.
\end{align*}
Hence, $H'(t) \geq 0$ iff 
$$(\sinh t)^{N+2} \geq \cosh t \left(\dfrac{(\sinh t)^2}{\frac{(\sinh t)^2}{N+2}+1}\right)^{(N+2)/2},$$
which is equivalent to
$$\left(\frac{(\sinh t)^2}{N+2}+1\right)^{N+2} \geq (\cosh t)^2.$$
The above inequality is obvious by the Bernoulli inequality which states that $\left(1+x\right)^r\geq 1+rx$ for every real number $r \geq 1$ and $x \geq -1$. Then, $H(t)\geq H(0)=0$ or $G'(t) \geq 0$, as desired. Thus,
$$k_{N,2}(s) \geq \dfrac{N-1}{N+2} s^2,$$
for all $N \geq 3$, and $s \geq 0$.
\end{proof}

\begin{lemma}\label{LpI} For $p > 2$. There exists $C(N,p)>0$ such that $$k_{N,p}(s) \geq C(N,p) s^p$$
for all $s\geq 0$.
    
\end{lemma}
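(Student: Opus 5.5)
Since $\Phi$ is an increasing bijection of $[0,\infty)$, I will work in the variable $t$ and show that
\[
\frac{k_{N,p}(\Phi(t))}{\Phi(t)^p}=\frac{(\sinh t)^{p(N-1)}-\Phi(t)^{\frac{p(N-1)}{N}}}{\Phi(t)^p}
\]
is bounded below by a positive constant on $(0,\infty)$. The plan has three parts: pointwise positivity, a positive limit at $t\to0^+$, and a positive limit inferior at $t\to\infty$. Together with continuity, these give a positive infimum.

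\textbf{Positivity and reduction to $p=2$.} Set $a=(\sinh t)^{2(N-1)}$ and $b=\Phi(t)^{2(N-1)/N}$. Lemma \ref{L2I} gives $a-b\ge \frac{N-1}{N+2}\Phi^2>0$ for $t>0$, so in particular $a>b$. Since $r:=p/2\ge1$, the mean value theorem (or the convexity estimate $a^r-b^r\ge r\,b^{r-1}(a-b)$) yields
\[
k_{N,p}(\Phi(t))=a^r-b^r\ge \frac{p}{2}\,\Phi(t)^{\frac{(p-2)(N-1)}{N}}\cdot\frac{N-1}{N+2}\,\Phi(t)^2 .
\]
Dividing by $\Phi^p$, the right-hand side is of order $\Phi^{\frac{(p-2)(N-1)}{N}+2-p}=\Phi^{-\frac{p-2}{N}}$. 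This is large for small $\Phi$, so the small-$t$ regime is handled (and the limit there is in fact $+\infty$). It also gives strict positivity for every $t>0$. For large $t$, however, it decays, so this regime needs a separate argument.

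\textbf{Large $t$.} Here I will use the sharper bound $a^r-b^r\ge a^{r-1}(a-b)$, equivalently compare directly. As $t\to\infty$ we have $\sinh t\sim e^t/2$, and $\Phi(t)\sim \frac{N}{N-1}2^{1-N}e^{(N-1)t}$. Hence $(\sinh t)^{p(N-1)}\sim 2^{-p(N-1)}e^{p(N-1)t}$, while $\Phi^p\sim \left(\frac{N}{N-1}\right)^p2^{p(1-N)}e^{p(N-1)t}$, and $\Phi^{p(N-1)/N}$ is of strictly lower exponential order. Therefore
\[
\lim_{t\to\infty}\frac{k_{N,p}(\Phi(t))}{\Phi(t)^p}=\left(\frac{N-1}{N}\right)^p>0.
\]

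\textbf{Conclusion.} The ratio is continuous and positive on $(0,\infty)$, tends to $+\infty$ at $0^+$, and tends to $\left(\frac{N-1}{N}\right)^p$ at $\infty$. A continuous positive function on $(0,\infty)$ with positive (or infinite) limits at both ends has a positive infimum, and I take $C(N,p)$ to be this infimum; the case $s=0$ is trivial. The main point needing care is the large-$t$ asymptotics of $\Phi$: I would justify $\Phi(t)e^{-(N-1)t}\to \frac{N}{N-1}2^{1-N}$ via L'Hospital's rule applied to $\Phi(t)/e^{(N-1)t}$. The approach does not give an explicit constant; if one is wanted, the two bounds above could be combined on a split at $t=1$.
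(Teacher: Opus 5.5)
Your proof is correct, but it handles the small-$t$ end differently from the paper. The paper computes both endpoint limits directly. At infinity it does what you do, using L'Hospital on $(\sinh t)^{N-1}/\Phi(t)$. At $0$ it applies L'Hospital's rule twice and expands $\sinh$, $\cosh$, $\Phi^{2-p}$ and $\Phi^{-p/N}$ to second order, showing the resulting bracket is a positive multiple of $t^{2-p}$.

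You instead bootstrap from Lemma \ref{L2I} via $a^r-b^r\ge r\,b^{r-1}(a-b)$. This gives the explicit lower bound $\frac{p(N-1)}{2(N+2)}\Phi^{-(p-2)/N}$ and avoids the expansion entirely. It also supplies the pointwise positivity of $k_{N,p}(\Phi(t))$ on $(0,\infty)$. The paper's ``it suffices to check the two limits'' tacitly needs this but never states it (it follows, e.g., from $\Phi(t)<(\sinh t)^N$).

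The price is that you lean on the global $p=2$ inequality, which is special to $\psi=\sinh$. The paper's direct asymptotic computation is the one that transfers to general warping functions in Lemma \ref{lem:CNP-asymptotics}, where positivity then has to be assumed separately.

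Your closing remark that the approach yields no explicit constant is too pessimistic. Take the bound $a^r-b^r\ge a^{r-1}(a-b)$ that you mention but do not use, and combine it with Lemma \ref{L2I} and $(\sinh t)^{N-1}\ge \frac{N-1}{N}\Phi(t)$. The latter follows by comparing derivatives and using $\cosh\ge\sinh$. Together these give, for all $t>0$,
\[
\frac{k_{N,p}(\Phi(t))}{\Phi(t)^p}\ge \frac{N-1}{N+2}\left(\frac{(\sinh t)^{N-1}}{\Phi(t)}\right)^{p-2}\ge \frac{N-1}{N+2}\left(\frac{N-1}{N}\right)^{p-2},
\]
so no limits at either end are needed at all.
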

\begin{proof}
    Since $\Phi$ is a diffeomorphism, it suffices to show that $$\displaystyle \lim_{t \rightarrow 0, \infty} \dfrac{k_{N, p}(\Phi(t))}{\Phi^p(t)} > 0.$$
    Note that $\Phi (t)=N\int_0^t (\sinh s)^{N-1}ds \rightarrow \infty$ as $t \rightarrow \infty$, we have

   \begin{align*}
     \displaystyle \lim_{t \rightarrow \infty} \dfrac{k_{N, p}(\Phi(t))}{\Phi^p(t)} &= \displaystyle \lim_{t \rightarrow \infty} \dfrac{\sinh^{p(N-1)} t -\Phi^{\frac{p(N-1)}{N}}(t) }{\Phi^p(t)}
     =\displaystyle \lim_{t \rightarrow \infty} \dfrac{\sinh^{p(N-1)} t}{\Phi^p(t)}
      =\left(\displaystyle \lim_{t \rightarrow \infty} \dfrac{\sinh^{(N-1)} t}{\Phi(t)}\right)^p\\
     &=\left(\displaystyle \lim_{t \rightarrow \infty} \dfrac{(N-1)\sinh^{(N-2)} t \cosh t}{N\sinh ^{N-1}t}\right)^p
     =\left( \frac{N-1}{N}\right)^p.
   \end{align*}
   
We also compute
\[
\lim_{t \rightarrow 0} \frac{\sinh^{p(N-1)}(t)-\Phi^{\frac{p(N-1)}{N}}(t)}{\Phi^p(t)}.
\]
Applying L'Hospital's rule twice, it is straightforward to get
\begin{align*}
    &\lim_{t \rightarrow 0} \frac{\sinh^{p(N-1)}(t)-\Phi^{\frac{p(N-1)}{N}}(t)}{\Phi^p(t)}\\
    &=\frac{(N-1)((p-1)N-p)}{(p-1)N^2}\\
    &\times\lim_{t\rightarrow 0} \left[\frac{(\sinh(t))^{(p-2)N-p}(\sinh'(t))^2}{(\Phi(t))^{p-2}}+\frac{1}{(p-1)N-p}\frac{(\sinh(t))^{(p-2)N+1-p}\sinh''(t)}{(\Phi(t))^{p-2}}-\Phi^{-p/N}(t)\right].
\end{align*}
Note that $\sinh(t) = t + \frac{1}{6} t^3 + o(t^3)$ as $t \rightarrow 0$, we have
\[
\sinh(t)^{(p-2)N-p} = t^{(p-2)N-p}\left(1+((p-2)N-p)\frac{1}{6}t^2+o(t^2)\right),
\]
\[
\sinh(t)^{(p-2)N-p+1} = t^{(p-2)N-p+1}\left(1+((p-2)N-p+1)\frac{1}{6} t^2 + o(t^2)\right),
\]
and
\[
(\sinh'(t))^2 = \left(\cosh(t)\right)^2 = \left(1 + \frac{t^2}{2} + o(t^2)\right)^2 = 1 + t^2 + o(t^2).
\]
Since $\Phi(t)=t^N+\frac{N(N-1)}{N+2}\frac{1}{6}t^{N+2}+o(t^{N+2})$, we have
\[
\Phi(t)^{2-p} = t^{N(2-p)}\left(1+(2-p)\frac{N(N-1)}{N+2}\frac{1}{6}t^2+o(t^2)\right),
\]
and
\[
\Phi(t)^{-p/N}=t^{-p}\left(1-\frac{p(N-1)}{N+2}\frac{1}{6}t^2+o(t^2)\right).
\]
Then,
\begin{align*}
    &\frac{(\sinh(t))^{(p-2)N-p}(\sinh'(t))^2}{(\Phi(t))^{p-2}} + \frac{1}{(p-1)N-p}\frac{(\sinh(t))^{(p-2)N+1-p}\sinh''(t)}{(\Phi(t))^{p-2}} - \Phi^{-p/N}(t) \\
    &= \left((p-2)\frac{3N}{N+2}+6-p+\frac{6}{(p-1)N-p}+\frac{p(N-1)}{N+2}\right) \frac{1}{6} t^{2-p} + o(t^{2-p}),
\end{align*}
which tends to $\infty$ as $p > 2$.
\end{proof}
\medskip

The above two Lemmas led us to the following version of P\'{o}lya--Szeg\"o inequality.

\begin{lemma}\label{keyLem}
    For $p \geq 2$ and $N \geq 3$, there exists a constant $\lambda(N,p)>0$ such that
    \begin{equation}\label{key1}        
   \|\nabla_{\mathbb{H}}u\|^p_p-\lambda(N,p)\|u\|^p_p \geq \|\nabla u_e^\sharp\|^p_p.
    \end{equation}
    In particular,
    \begin{equation}\label{key2} 
    \|\nabla_{\mathbb{H}}u\|^2_2-\dfrac{N^2(N-1)}{4(N+2)}\|u\|^2_2 \geq \|\nabla u_e^\sharp\|^2_2.
    \end{equation}
\end{lemma}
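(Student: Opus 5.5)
The plan is to combine the classical P\'olya--Szeg\"o principle on $\mathbb{H}^N$, the exact decomposition \eqref{SRI}, the lower bounds on $k_{N,p}$ from Lemma~\ref{L2I} and Lemma~\ref{LpI}, and a one-dimensional Hardy inequality on the half-line for the decreasing rearrangement $v=u^*$. By density it suffices to take $u\in C_c^\infty(\mathbb{H}^N)$; then $v$ is bounded, nonincreasing, locally absolutely continuous on $(0,\infty)$, and vanishes for $s\ge \Vol_g(\operatorname{supp}u)$.

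\emph{Step 1.} By P\'olya--Szeg\"o and \eqref{SRI},
\[
\|\nabla_{\mathbb{H}}u\|_p^p\ \ge\ \|\nabla u_e^\sharp\|_p^p+(N\sigma_N)^p\int_0^\infty |v'(s)|^p\,k_{N,p}\!\left(\tfrac{s}{\sigma_N}\right)ds .
\]

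\emph{Step 2.} Insert $k_{N,p}(s)\ge C(N,p)s^p$. For $p=2$, Lemma~\ref{L2I} gives $C(N,2)=\frac{N-1}{N+2}$. For $p>2$, the ratio $k_{N,p}(\Phi(t))/\Phi(t)^p$ is continuous and positive on $(0,\infty)$. It tends to $+\infty$ as $t\to0$ and to $((N-1)/N)^p$ as $t\to\infty$ (Lemma~\ref{LpI}), so it has a positive infimum. Positivity for $t>0$ follows from the isoperimetric comparison $\sinh^{N-1}t>\Phi(t)^{(N-1)/N}$. The powers of $\sigma_N$ cancel, and the remainder term is bounded below by
\[
N^pC(N,p)\int_0^\infty s^p|v'(s)|^p\,ds .
\]

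\emph{Step 3 (Hardy inequality).} I will show
\[
\int_0^\infty |v|^p\,ds\le p^p\int_0^\infty s^p|v'|^p\,ds .
\]
Integrate by parts:
\[
\int_0^\infty |v|^p\,ds=\Big[s|v|^p\Big]_0^\infty-p\int_0^\infty s|v|^{p-2}v v'\,ds .
\]
The boundary terms vanish because $v$ is bounded near $0$ and compactly supported. Then apply H\"older with exponents $\frac{p}{p-1}$ and $p$ to get
\[
\int_0^\infty |v|^p\,ds\le p\Big(\int_0^\infty|v|^p\,ds\Big)^{\frac{p-1}{p}}\Big(\int_0^\infty s^p|v'|^p\,ds\Big)^{\frac1p},
\]
and divide through. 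Since $\int_0^\infty|v|^p\,ds=\|u\|_p^p$ by the layer cake representation, we obtain \eqref{key1} with
\[
\lambda(N,p)=\frac{N^pC(N,p)}{p^p}.
\]
For $p=2$ this gives $\lambda(N,2)=\frac{N^2(N-1)}{4(N+2)}$, which is exactly \eqref{key2}.

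\emph{Step 4 (density).} Extend to $W^{1,p}(\mathbb{H}^N)$ by approximation. This uses continuity of the rearrangement in $L^p$ and weak lower semicontinuity of $\|\nabla u_e^\sharp\|_p$.

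I expect the main obstacle to be the justification of Step 3 at the level of regularity. One must know that $v=u^*$ is locally absolutely continuous, so that \eqref{SRI} and the integration by parts are legitimate. One must also control possible flat parts of $u$ (where $\mu_u$ jumps). These issues are handled in the derivation of \eqref{SRI} in \cite{H18}, and I would rely on that. The secondary point is extracting a uniform positive constant in Lemma~\ref{LpI}, which needs the strict inequality $k_{N,p}>0$ for $t>0$ together with the two limits.
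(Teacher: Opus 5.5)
Your proposal is correct and follows essentially the paper's route: P\'olya--Szeg\"o together with \eqref{SRI}, the lower bound $k_{N,p}(s)\ge C(N,p)s^p$, and a one-dimensional Hardy inequality on $(0,\infty)$, giving $\lambda(N,p)=N^pC(N,p)/p^p$. The paper obtains the Hardy step through the substitution $w=v s^{1/p}$ (for $p=2$ this is the identity $\int_0^\infty |v'|^2s^2\,ds=\int_0^\infty |w'|^2 s\,ds+\frac14\int_0^\infty v^2\,ds$) rather than by integration by parts and H\"older, and your explicit check that $k_{N,p}(\Phi(t))>0$ for $t>0$ via $\Phi(t)<\sinh^N t$ supplies the step the paper leaves implicit when passing from the two limits in Lemma~\ref{LpI} to a positive infimum.
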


\begin{proof}
We have by \eqref{SRI} and Lemma \ref{L2I} that
\begin{align*}
    \int_{\mathbb{H}^N}|\nabla_{\mathbb{H}}u_g^\sharp|^2_gdV_{\mathbb{H}}&=\int_{\mathbb{R}^N}|\nabla u_e^\sharp|^2dx+(N\sigma_N)^2 \int_0^\infty |v'(s)|^2 k_{N,2}\left(\frac{s}{\sigma_N}\right)ds\\
    &\geq \int_{\mathbb{R}^N}|\nabla u_e^\sharp|^2dx+\dfrac{N^2(N-1)}{N+2}\int_0^\infty |v'(s)|^2 s^2ds,
\end{align*}
where $v:=u^*$. Define $w(s):=v(s)s^{1/2}$, then
$$\int_0^\infty |v'(s)|^2s^2ds=\int_0^\infty |w'(s)|^2s+\dfrac{1}{4}\int_0^\infty (v(s))^2ds,$$
and 
$$\int_{\mathbb{H}^N}|\nabla_{\mathbb{H}} u_g^\sharp|^2_gdV_{\mathbb{H}} \geq \int_{\mathbb{R}^N}|\nabla u_e^\sharp|^2dx +\dfrac{N^2(N-1)}{N+2}\int_0^\infty |w'(s)|^2s+\dfrac{N^2(N-1)}{4(N+2)}\int_0^\infty (v(s))^2ds.$$
Using $$\int_0^\infty (v(s))^2ds=\int_{\mathbb{H}^N}|u_g^\sharp|^2dV_{\mathbb{H}},$$
we have
$$\int_{\mathbb{H}^N}|\nabla_{\mathbb{H}}u_g^\sharp|_g^2dV_{\mathbb{H}}-\dfrac{N^2(N-1)}{4(N+2)} \int_{\mathbb{H}^N} |u_g^\sharp|^2_gdV_{\mathbb{H}}\geq \int_{\mathbb{R}^N} |\nabla u_e^\sharp|^2dx.$$
Hence, applying P\'{o}lya-Szeg\"{o} inequality, we get the desired estimate \eqref{key2}.

Similarly, by applying \eqref{SRI} and Lemma \ref{LpI} with the definition \(w(s):=v(s)s^{1/p}\), we can also derive \eqref{key1}.

\end{proof}

\medskip

\section{Poincar\'{e}-Sobolev, Poincar\'{e}-Gagliardo-Nirenberg and Poincar\'e-logarithmic Sobolev inequalities on $\mathbb{H}^N$-Proofs of Theorem \ref{pSob} and Theorem \ref{T1}}

In this section, we apply Lemma~\ref{keyLem} to establish Poincaré–Sobolev, Poincaré-Gagliardo-Nirenberg and Poincar\'e-logarithmic Sobolev inequalities on the hyperbolic space $\mathbb{H}^N$. More precisely, we show how the framework provided by Lemma~\ref{keyLem} yields these functional inequalities in a natural and unified manner in the hyperbolic setting. We begin the proof of Theorem~\ref{pSob}

  \begin{proof}[Proof of Theorem \ref{pSob}]
By Lemma~\ref{keyLem}, the hyperbolic gradient term can be compared with its Euclidean counterpart via the symmetric decreasing rearrangement. Combining this with the sharp $L^p$–Sobolev inequality on $\mathbb{R}^N$, we obtain
\begin{align*}
\|\nabla_{\mathbb{H}} u\|_{p}^{p}-\lambda(N,p)\|u\|_{p}^{p}
&\geq \|\nabla u_e^\sharp\|_{p}^{p} \\
&\geq S(N,p)^p\,\|u_e^\sharp\|_{\frac{Np}{N-p}}^{p}.
\end{align*}
Finally, since rearrangement preserves $L^q$ norms, we have
\[
\|u_e^\sharp\|_{\frac{Np}{N-p}}=\|u\|_{\frac{Np}{N-p}},
\]
which yields
\[
\|\nabla_{\mathbb{H}} u\|_{p}^{p}-\lambda(N,p)\|u\|_{p}^{p}
\geq S(N,p)^p\|u\|_{\frac{Np}{N-p}}^{p}.
\]
This completes the proof.
\end{proof}

 \medskip   

Next, in the same spirit, we derive a family of Poincaré–Gagliardo–Nirenberg inequalities on the hyperbolic space $\mathbb{H}^N$. As a preliminary step, we recall the sharp Gagliardo–Nirenberg inequalities in the Euclidean space $\mathbb{R}^N$, which were established by Del Pino and Dolbeault in \cite{PD02, PD03} (see also Lam and Lu \cite{LL17} for the weighted version). These inequalities play a fundamental role in our analysis, as they provide the optimal interpolation framework needed to transfer sharp estimates from the Euclidean setting to the hyperbolic one via the rearrangement and comparison principles developed earlier.

\begin{lemma}
    Let $1<p<N$, $\alpha\in \left(0, \frac{N}{N-p}\right]$, $\alpha \neq 1$. Then, for all $u \in C^\infty_0(\mathbb{R}^N)$
    \begin{itemize}
        \item [(i)] for $\alpha > 1$,
\[
\|u\|_{L^{\alpha p}(\mathbb{R}^N)} \leq GN_1(N, p, \alpha) \| \nabla u \|_{L^p(\mathbb{R}^N)}^{\theta} \| u \|_{L^{\alpha(p-1)+1}(\mathbb{R}^N)}^{1-\theta}
\]
with
\[
\theta = \frac{N(\alpha - 1)}{\alpha (N p - (\alpha p + 1 - \alpha)(N - p))},
\]
the sharp constant $GN_1(N, p, \alpha)$ is given by
\[
GN_1(N, p, \alpha) = 
\left( \frac{q-p}{p\sqrt{\pi}} \right)^{\theta}
\left( \frac{p q}{N(q-p)} \right)^{\frac{\theta}{p}}
\left( \frac{\delta}{p q} \right)^{\frac{1}{\alpha p}}
\left(
  \frac{
    \Gamma\left( q \frac{p-1}{q-p} \right)
    \Gamma\left( \frac{N}{2} + 1 \right)
  }{
    \Gamma\left( \frac{p-1}{p}\frac{\delta}{q-p} \right)
    \Gamma\left( N \frac{p-1}{p} + 1 \right)
  }
\right)^{\frac{\theta}{N}}
\]
with
\[
q = \alpha(p-1)+1, \quad \delta = Np-(N-p)q,
\]
and an extremal function is given by
\[
u(x) = \left( 1 + |x|^{\frac{p}{p-1}} \right)^{-\frac{1}{\alpha-1}}.
\]
        \item [(ii)] for $0<\alpha < 1$,
\[
\|u\|_{L^{\alpha(p-1)+1}(\mathbb{R}^N)} \leq GN_2(N, p, \alpha) \| \nabla u \|_{L^p(\mathbb{R}^N)}^{\theta} \| u \|_{L^{\alpha p}(\mathbb{R}^N)}^{1-\theta},
\qquad u \in C_0^\infty(\mathbb{R}^N),
\]
with
\[
\theta = \frac{N(1-\alpha)}{(\alpha p+1-\alpha)(N-\alpha(N-p))},
\]
the sharp constant $GN_2(N, p, \alpha)$ is given by
\[
GN_2(N, p, \alpha) =
\left( \frac{p-q}{p\sqrt{\pi}} \right)^\theta
\left( \frac{p q}{N(p-q)} \right)^{\frac{\theta}{p}}
\left( \frac{p q}{\delta} \right)^{\frac{1-\theta}{\alpha p}}
\left(
  \frac{
    \Gamma\left( \frac{p-1}{p}\frac{\delta}{p-q} + 1 \right)
    \Gamma\left( \frac{N}{2} + 1 \right)
  }{
    \Gamma\left( q\frac{p-1}{p-q}+1 \right)
    \Gamma\left( N \frac{p-1}{p} + 1 \right)
  }
\right)^{\frac{\theta}{N}}
\]
with
\[
q = \alpha(p-1)+1, \qquad \delta = Np - q(N-p) > 0,
\]
and an extremal function is given by
\[
u(x) = \left( 1 - |x|^{\frac{p}{p-1}} \right)_+^{\frac{1}{1-\alpha}},\qquad a_+ = \max\{a, 0\}.
\]
    \end{itemize}
\end{lemma}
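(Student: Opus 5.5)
This lemma is the sharp Euclidean Gagliardo--Nirenberg inequality of Del Pino and Dolbeault. The paper recalls it rather than proves it, so I would reproduce their mass-transport route (Cordero-Erausquin, Nazaret and Villani) rather than anything hyperbolic. I treat case (i), $\alpha>1$, in detail; case (ii) is the same argument with the opposite sign in the convexity exponent. First I reduce to nonnegative, radially decreasing $u$: by the Euclidean P\'olya--Szeg\"o inequality, $\|\nabla u_e^\sharp\|_p\le\|\nabla u\|_p$, while all $L^r$ norms are preserved. Then I homogenize by scaling $u\mapsto \lambda u(\mu\,\cdot)$ and fix $\|u\|_{\alpha p}=\|h\|_{\alpha p}$, where $h(x)=(1+|x|^{p/(p-1)})^{-1/(\alpha-1)}$ is the claimed extremal. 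The exponent $\theta$ is exactly what makes the inequality invariant under both scalings. A quick check is that $\theta\in(0,1)$ for $1<\alpha\le N/(N-p)$, with $\theta=1$ at $\alpha=N/(N-p)$, the Sobolev endpoint.

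Next I set $F=u^{\alpha p}/\|u\|_{\alpha p}^{\alpha p}$ and $G=h^{\alpha p}/\|h\|_{\alpha p}^{\alpha p}$, and take the Brenier map $T=\nabla\varphi$ pushing $F$ forward to $G$. The Monge--Amp\`ere equation reads $F=G(\nabla\varphi)\det D^2\varphi$. I apply the concavity of $\det^{\gamma}$ with $\gamma=1-1/\alpha\in(0,1/N]$; this is where $\alpha\le N/(N-p)$ enters, via $\gamma\le 1/N$. Together with the arithmetic--geometric mean inequality on the eigenvalues, this gives
\[
\int G^{\gamma}\le \frac{1}{N}\int F^{\gamma}\,\Delta\varphi ,
\]
up to normalization of the powers, where $F^{\gamma}\propto u^{\alpha p-p(\alpha-1)}=u^{\alpha(p-1)+1}$ after integrating against $F$. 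I then integrate by parts, which moves a derivative onto $F^{\gamma}$ and produces $\int u^{q-1}\nabla u\cdot\nabla\varphi$ with $q=\alpha(p-1)+1$. Applying H\"older with exponents $p$ and $p'$ bounds this by $\|\nabla u\|_p\,(\int F|\nabla\varphi|^{p'})^{1/p'}$. The last factor equals $\int G|y|^{p'}$, which is a constant depending only on $h$. The result is an inequality of the form
\[
A\,\|u\|_q^{q}\le B\,\|\nabla u\|_p ,
\]
which becomes an equality for $u=h$, since then $T=\mathrm{id}$ and every step is sharp. Here one uses that $h$ satisfies $|\nabla h|^{p-2}\nabla h\propto h^{q-1}x\cdot$(const), which is what makes H\"older sharp. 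Undoing the scaling normalization converts this into the multiplicative form with the exponents $\theta,1-\theta$.

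The main obstacle is justifying the integration by parts. Brenier's map is in general only BV, so $\Delta\varphi$ must be read as the Alexandrov Laplacian, whose distributional Laplacian dominates it. The inequality goes in the correct direction for the argument, but this needs checking together with the boundary and decay terms at infinity. One also needs $\int G|y|^{p'}<\infty$, which holds precisely in the stated range of $\alpha$.

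The remaining routine step is computing $GN_1$ explicitly. I would evaluate $\|h\|_{\alpha p}$, $\|h\|_q$ and $\|\nabla h\|_p$ as Beta integrals in polar coordinates and convert them to Gamma functions, recovering the stated expression.

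For (ii), the concavity exponent becomes negative and one uses convexity of $\det^{-|\gamma|}$. The extremal $h$ is then compactly supported, and the roles of the norms $L^q$ and $L^{\alpha p}$ swap, but the steps are otherwise identical.
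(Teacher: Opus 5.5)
The paper gives no proof of this lemma; it simply quotes it from Del Pino--Dolbeault. Replacing that citation by the Cordero-Erausquin--Nazaret--Villani transport argument is a legitimate, self-contained route, since it needs neither existence nor uniqueness of optimizers. Several of your ingredients are right: the reduction to $u\ge 0$, the Alexandrov-versus-distributional Laplacian issue, and $|\nabla h|^{p-2}\nabla h\propto -h^{q-1}x$, which makes H\"older sharp. But the central computation fails as written, for two reasons.

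\emph{First, the exponent is wrong.} With $F=u^{\alpha p}$, the H\"older step closes only if you integrate by parts against $F^{q/(\alpha p)}=u^{q}$. Then $u^{(q-1)p'}=u^{\alpha p}=F$, and the second factor is $\bigl(\int G|y|^{p'}dy\bigr)^{1/p'}$. Hence the power on the determinant must be $\beta=1-\frac{q}{\alpha p}=\frac{\alpha-1}{\alpha p}$, not $1-\frac{1}{\alpha}$. Your identity $\alpha p-p(\alpha-1)=\alpha(p-1)+1$ is false: the left side equals $p$. With $\gamma=1-1/\alpha$, H\"older would leave $\int u^{p}|\nabla\varphi|^{p'}$, which is not a transport integral, and $\gamma\le 1/N$ would mean $\alpha\le N/(N-1)$. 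With the correct $\beta$, the condition $\beta\le 1/N$ is exactly $\alpha\le N/(N-p)$. This is where the range comes from, not the finiteness of $\int G|y|^{p'}$, which holds for all $\alpha<(N+p')/(N-p)$.

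\emph{Second, the intermediate inequality has the wrong form.} From $\int G^{1-\beta}=\int F^{1-\beta}(\det D^2\varphi)^{\beta}$ and AM--GM you get $(\det D^2\varphi)^{\beta}\le(\Delta\varphi/N)^{N\beta}$. Only at the Sobolev endpoint $N\beta=1$ does this yield $\int G^{1-\beta}\le\frac1N\int F^{1-\beta}\Delta\varphi$. Otherwise you must linearize with $t^{N\beta}\le 1-N\beta+N\beta t$, which leaves a zeroth-order term. Normalizing $\|u\|_{\alpha p}=1$, the correct outcome is
\[
\int G^{1-\beta}\le(1-N\beta)\|u\|_q^q+\beta q\,\|\nabla u\|_p\Bigl(\int G|y|^{p'}dy\Bigr)^{1/p'} .
\]
Your claimed $A\|u\|_q^q\le B\|\nabla u\|_p$ (at fixed $\|u\|_{\alpha p}$) is actually false. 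Take $u_\lambda=\lambda^{N/(\alpha p)}u(\lambda\,\cdot)$, which preserves $\|u\|_{\alpha p}$. Then $\|u_\lambda\|_q^q=\lambda^{-N\beta}\|u\|_q^q\to\infty$ as $\lambda\to0$, while $\|\nabla u_\lambda\|_p=\lambda^{1-N\beta}\|\nabla u\|_p$ stays bounded.

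The exponent $\theta$ comes precisely from applying the additive inequality to $u_\lambda$ and optimizing. For $N\beta<1$, the bound $\sup_{\lambda>0}\bigl(A\lambda^{N\beta}-B\lambda\|\nabla u\|_p\bigr)\le(1-N\beta)\|u\|_q^q$ gives $\|u\|_q^{q(1-N\beta)}\|\nabla u\|_p^{N\beta}\ge c\,\|u\|_{\alpha p}^{m}$ with $m=q(1-N\beta)+N\beta$. Taking the power $1/m$ gives (i) with $\theta=N\beta/m$, which agrees with the stated $\theta$.

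In (ii) one has $\beta<0$. The tangent-line bound $(\det D^2\varphi)^{\beta}\ge 1+N|\beta|-|\beta|\Delta\varphi$ then yields
$(1+N|\beta|)\|u\|_q^q\le\int G^{1-\beta}+|\beta|q\|\nabla u\|_p\bigl(\int G|y|^{p'}dy\bigr)^{1/p'}$.
This is an upper bound on $\|u\|_q$, again to be optimized over $\lambda$. So (ii) is not ``the same argument with the opposite sign'' without this change.

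With these corrections your outline becomes the standard transport proof. The explicit values of $GN_1$ and $GN_2$ must still be matched to the stated formulas by the Beta-integral computation.
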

As a direct consequence of this lemma and our lemma \ref{keyLem}, we get 
\begin{theorem}
    Let $2 \leq p<N$, $\alpha\in \left(0, \frac{N}{N-p}\right]$, $\alpha \neq 1$. Then, there exists $\lambda(N,p)>0$ (with $\lambda(N,2)=\dfrac{N^2(N-1)}{4(N+2)}$) such that for all $u \in C^\infty_0(\mathbb{H}^N)$, we have
    \begin{itemize}
        \item [(i)] for $\alpha>1$,
        \[
\|u\|_{\alpha p} \leq GN_1(N, p, \alpha) \left( \|\nabla_{\mathbb{H}}u\|^p_p-\lambda(N,p)\|u\|^p_p \right)^{\frac{\theta}{p}} \| u \|_{\alpha(p-1)+1}^{1-\theta}
\]
with
\[
\theta = \frac{N(\alpha - 1)}{\alpha (N p - (\alpha p + 1 - \alpha)(N - p))},
\]
        \item [(ii)] for $0<\alpha<1$,
        \[
\|u\|_{\alpha(p-1)+1} \leq GN_2(N, p, \alpha) \left( \|\nabla_{\mathbb{H}}u\|^p_p-\lambda(N,p)\|u\|^p_p \right)^{\frac{\theta}{p}} u \|_{\alpha p}^{1-\theta}
\]
with
\[
\theta = \frac{N(1-\alpha)}{(\alpha p+1-\alpha)(N-\alpha(N-p))}.
\]
    \end{itemize}
\end{theorem}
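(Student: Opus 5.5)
The plan is to transfer the sharp Euclidean Gagliardo--Nirenberg inequalities of Del Pino--Dolbeault (the lemma just stated) to $\mathbb{H}^N$. The bridge is the Euclidean symmetric decreasing rearrangement $u_e^\sharp$ together with the improved P\'olya--Szeg\"o inequality of Lemma~\ref{keyLem}. This mirrors the proof of Theorem~\ref{pSob}. Fix $u\in C_0^\infty(\mathbb{H}^N)$, and take $\lambda(N,p)$ to be the constant furnished by Lemma~\ref{keyLem}, with $\lambda(N,2)=\frac{N^2(N-1)}{4(N+2)}$ coming from \eqref{key2}.

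\emph{Step 1.} Note that $u_e^\sharp$ is an admissible test function for the Euclidean Gagliardo--Nirenberg lemma. Since $u$ has compact support in $\mathbb{H}^N$, the volume of its support is finite. Hence $u_e^\sharp$ is a radial, nonincreasing, compactly supported Lipschitz function on $\mathbb{R}^N$, and $\|\nabla u_e^\sharp\|_p<\infty$ by Lemma~\ref{keyLem} (or the classical P\'olya--Szeg\"o principle). The Euclidean inequalities extend from $C_0^\infty(\mathbb{R}^N)$ to $W^{1,p}$ functions with compact support by a standard mollification argument, because both sides are continuous in the relevant norms. So we may apply (i) or (ii) directly to $u_e^\sharp$.

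\emph{Step 2.} By the layer cake representation recalled in Section~\ref{sec-2}, applied with $F(t)=t^r$, we have $\|u_e^\sharp\|_{L^r(\mathbb{R}^N)}=\|u\|_{L^r(\mathbb{H}^N)}$ for every $r>0$. In particular this holds for $r=\alpha p$ and for $r=\alpha(p-1)+1$. The latter exponent is positive, and the identity holds regardless of whether $r\ge1$, since only monotonicity of $F$ is used.

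\emph{Step 3.} In case (i) we chain the inequalities:
\[
\|u\|_{\alpha p}=\|u_e^\sharp\|_{\alpha p}\le GN_1\,\|\nabla u_e^\sharp\|_p^{\theta}\,\|u_e^\sharp\|_{\alpha(p-1)+1}^{1-\theta}
\le GN_1\left(\|\nabla_{\mathbb{H}}u\|_p^p-\lambda(N,p)\|u\|_p^p\right)^{\theta/p}\|u\|_{\alpha(p-1)+1}^{1-\theta}.
\]
The last step uses \eqref{key1} together with $\theta\ge 0$, so that $t\mapsto t^{\theta/p}$ is nondecreasing. Case (ii) is identical, with the roles of the two Lebesgue exponents exchanged and $GN_2$ in place of $GN_1$. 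We also check that $\theta\in(0,1]$ in both cases for the stated range of $\alpha$. That range makes the denominators positive; in case (ii), $\delta>0$ because $q<p<\frac{Np}{N-p}$.

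The only delicate point is Step 1, namely justifying the use of the Euclidean inequality on the rearranged, merely Lipschitz function. This is routine by density. Also, Lemma~\ref{keyLem} was stated for $N\ge3$; the hypothesis $2\le p<N$ forces $N\ge3$, so this is consistent. No new estimate beyond Lemma~\ref{keyLem} is needed.
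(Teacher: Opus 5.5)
Your proposal is correct and follows the route the paper intends. The paper's proof says only that the argument mirrors that of Theorem~\ref{pSob}, namely equimeasurability of $u_e^\sharp$, the sharp Euclidean Gagliardo--Nirenberg inequality applied to $u_e^\sharp$, and the improved P\'olya--Szeg\"o bounds \eqref{key1}/\eqref{key2} combined with monotonicity of $t\mapsto t^{\theta/p}$. Your remarks on the admissibility of $u_e^\sharp$ (compact support, Lipschitz, density) and on the sign of $\theta$ supply details the paper leaves implicit.
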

\begin{proof}
   The proof is similar to the proof of Theorem~\ref{pSob} and is left to the reader.

\end{proof}
\begin{proof}[Proof of Theorem \ref{T1}]
    First, we claim that for all $p \geq 2$,
$$\int_{\mathbb{H}^N}|u|^p\ln |u| dV_{\mathbb{H}}=\int_{\mathbb{R}^N}(u_e^\sharp)^p\log (u_e^\sharp) dx.$$
Indeed, 
$$\int_{\mathbb{H}^N}|u|^p\log |u| dV_{\mathbb{H}}=\int_{\{|u|<1\}}|u|^p\log |u| dV_{\mathbb{H}}+\int_{\{|u|\geq1\}}|u|^p\log |u| dV_{\mathbb{H}}<\infty.$$
We have
$$\int_{\{|u|<1\}}|u|^p\log |u| dV_{\mathbb{H}}=-\int_{\{|u|<1\}}|u|^p|\log |u|| dV_{\mathbb{H}}=\int_0^1 V(\{|u|>t\})\left(pt^{p-1}\log t+t^{p-1}\right)dt.$$
Indeed,
\begin{align*}
   & \int_0^1 V(\{|u|>t\})\left(pt^{p-1}\log t+t^{p-1}\right)dt \;=\;\int_0^1 \left(\int_{\mathbb{H}^N}\chi_{\{|u|>t\}}(x)dV_{\mathbb{H}}\right)\left(pt^{p-1}\log t+t^{p-1}\right)dt\\
    &=\int_{\mathbb{H}^N} \left(\int_0^1 \chi_{\{|u|>t\}}(x)\left(pt^{p-1}\log t+t^{p-1}\right)dt\right)dV_{\mathbb{H}}
    =\int_{\mathbb{H}^N} \left(\int_0^{1\wedge |u(x)|} \left(pt^{p-1}\log t+t^{p-1}\right)dt\right)dV_{\mathbb{H}}\\
    &=\int_{\{|u|<1\}}\left(\int_0^{|u(x)|} \left(pt^{p-1}\log t+t^{p-1}\right)dt\right)dV_{\mathbb{H}}
    +\int_{\{|u| \geq1\}}\left(\int_0^{1} \left(pt^{p-1}\log t+t^{p-1}\right)dt\right)dV_{\mathbb{H}}\\
    &=\int_{\{|u|<1\}}\left(\int_0^{|u(x)|} \left(pt^{p-1}\log t+t^{p-1}\right)dt\right)dV_{\mathbb{H}}\\
    &=\int_{\{|u|<1\}} |u|^p \log|u|dV_{\mathbb{H}}.
\end{align*}
Here, we used $\int_0^1 \left(pt^{p-1}\log t+t^{p-1}\right)dt=0$. Similarly, 
$$\int_{\{|u| \geq 1\}}|u|^p\log |u| dV_{\mathbb{H}}=\int_1^\infty V(\{|u|>t\})\left(pt^{p-1}\log t+t^{p-1}\right)dt.$$
More clearly, 
\begin{align*}
   & \int_1^\infty V(\{|u|>t\})\left(pt^{p-1}\log t+t^{p-1}\right)dt \;=\;\int_1^\infty \left(\int_{\mathbb{H}^N}\chi_{\{|u|>t\}}(x)dV_{\mathbb{H}}\right)\left(pt^{p-1}\log t+t^{p-1}\right)dt\\
    &=\int_{\mathbb{H}^N} \left(\int_1^\infty \chi_{\{|u|>t\}}(x)\left(pt^{p-1}\log t+t^{p-1}\right)dt\right)dV_{\mathbb{H}}\\
    &=\int_{\{|u| \geq 1\}} \left(\int_1^{\infty} \chi_{\{|u|>t\}}(x)\left(pt^{p-1}\log t+t^{p-1}\right)dt\right)dV_{\mathbb{H}}\\
    &=\int_{\{|u| \geq 1\}}\left(\int_1^{|u(x)|} \left(pt^{p-1}\log t+t^{p-1}\right)dt\right)dV_{\mathbb{H}}
    =\int_{\{|u| \geq 1\}} |u|^p \log |u| dV_{\mathbb{H}}.
\end{align*}
From the two above identities, we derive
\begin{align*}
    \int_{\mathbb{H}^N}|u|^p\log |u|dV&=\int_{\{|u| < 1\}}|u|^p\log |u| dV_{\mathbb{H}}+\int_{\{|u| \geq 1\}}|u|^p\log |u| dV_{\mathbb{H}}\\
    &=\int_0^1 V(\{|u|>t\})\left(pt^{p-1}\log t+t^{p-1}\right)dt+\int_1^\infty V(\{|u|>t\})\left(pt^{p-1}\log t+t^{p-1}\right)dt\\
    &=\int_0^\infty V(\{|u|>t\})\left(pt^{p-1}\log t+t^{p-1}\right)dt\\
    &=\int_0^\infty V(\{u_e^\sharp>t\})\left(pt^{p-1}\log t+t^{p-1}\right)dt\\
    &=\int_{\mathbb{R}^N}(u_e^\sharp)^p\log (u_e^\sharp) dx,
\end{align*}
as desired. Using  \cite[Theorem~1.1]{PD03},
\begin{align*}
        \int_{\mathbb{H}^N} |u|^p\log|u|dV&=\int_{\mathbb{R}^N}(u_e^\sharp)^p\log (u_e^\sharp) dx
        \leq \dfrac{N}{p^2} \ln \left[\mathcal{L}_{N,p} \int_{\mathbb{R}^N}|\nabla (u_e^\sharp)|^pdx\right]\\
        &\leq \dfrac{N}{p^2}\ln\left[\mathcal{L}_{N,p}\left(\int_{\mathbb{H}^N}|\nabla_{\mathbb{H}} u|^pdV_{\mathbb{H}}-\lambda(N,p)\int_{\mathbb{H}^N}|u|^pdV_{\mathbb{H}}\right)\right].
    \end{align*}
   This completes the proof. 
\end{proof}
\medskip


\section{Gaussian Poincar\'e and Gaussian logarithmic Sobolev inequalities on $\mathbb{H}^N$-Proof of Theorem \ref{T2}}
We now present the proof of Theorem~\ref{T2} by using Theorem \ref{T1.1}, establishing the Gaussian Poincaré–type logarithmic Sobolev inequalities in the hyperbolic setting. Recall that
$$dm=G^{-1}e^{-\frac{\rho^2}{2}}dV_{\mathbb{H}}:=\rho_1 dV_{\mathbb{H}},$$
where $G:=\int_{\mathbb{H}^N}e^{-\rho^2/2}dV_{\mathbb{H}}$, and $\rho=\rho(x)=d_{\mathbb{H}}(x, 0)=\ln\left(\dfrac{1+|x|}{1-|x|}\right).$
\begin{proof}[Proof of Theorem \ref{T2}]
    First, assume that $\int_{\mathbb{H}^N}u^2dm=1$, then $\int_{\mathbb{H}^N}u^2\rho_1 dV_{\mathbb{H}}=1$. Set $v:=u\sqrt{\rho_1}$, thus $$\int_{\mathbb{H}^N}v^2dV_{H}=1.$$
By applying Theorem \ref{T1.1}, we have
\begin{equation}\label{eq 4.1}
    \int_{\mathbb{H}^N} |v|^2\log(|v|^2)dV_{\mathbb{H}} \leq \dfrac{N}{2}\log\left[\mathcal{L}_2\left(\int_{\mathbb{H}^N}|\nabla_{\mathbb{H}}v|^2dV_{\mathbb{H}}-\dfrac{N^2(N-1)}{4(N+2)}\int_{\mathbb{H}^N}|v|^2dV_{\mathbb{H}}\right)\right].
\end{equation}
By computing directly, 
\begin{align*}
    \text{LHS}\eqref{eq 4.1}&=\int_{\mathbb{H}^N} u^2 \log(u^2)dm+\int_{\mathbb{H}^N} u^2 \log(\rho_1)dm\\
    &=\int_{\mathbb{H}^N} u^2 \log(u^2)dm-\log(G)-\dfrac{1}{2}\int_{\mathbb{H}^N} u^2 \rho^2 dm,
\end{align*}
\begin{align*}
    \text{RHS}\eqref{eq 4.1}&=\dfrac{N}{2}\log\left(\dfrac{\mathcal{L}_2}{\alpha}\right)+\dfrac{N}{2}\log\left[e \dfrac{\alpha}{e} \left(\int_{\mathbb{H}^N}|\nabla_{\mathbb{H}}v|^2dV_{\mathbb{H}}-\dfrac{N^2(N-1)}{4(N+2)}\int_{\mathbb{H}^N}|v|^2dV_{\mathbb{H}}\right)\right]\\
    &\leq \dfrac{N}{2}\log\left(\dfrac{\mathcal{L}_2}{\alpha}\right)+\dfrac{N}{2} \dfrac{\alpha}{e} \left(\int_{\mathbb{H}^N}|\nabla_{\mathbb{H}}v|^2dV_{\mathbb{H}}-\dfrac{N^2(N-1)}{4(N+2)}\int_{\mathbb{H}^N}|v|^2dV_{\mathbb{H}}\right).
\end{align*}
Hence, from \eqref{eq 4.1}, it follows us
\begin{align*}
    \int_{\mathbb{H}^N} u^2 \log(u^2)dm &\leq \dfrac{N}{2} \dfrac{\alpha}{e} \left(\int_{\mathbb{H}^N}|\nabla_{\mathbb{H}}v|^2dV_{\mathbb{H}}-\dfrac{N^2(N-1)}{4(N+2)}\int_{\mathbb{H}^N}|v|^2dV_{\mathbb{H}}\right)+\dfrac{1}{2}\int_{\mathbb{H}^N} u^2 \rho^2 dm\\
    & + \log(G)+\dfrac{N}{2}\log\left(\dfrac{\mathcal{L}_2}{\alpha}\right).
\end{align*}
Next, we will estimate $G$. Indeed, with $\omega_{N-1}$ the area of the unit sphere $\mathbb{S}^{N-1}$,
\begin{align*}
    G=\int_{\mathbb{H}^N}e^{-\rho^2/2}dV_{\mathbb{H}}&=\omega_{N-1} \int_0^\infty e^{-r^2/2}\sinh^{N-1}rdr\leq \dfrac{\omega_{N-1}}{2^{N-1}}\int_0^\infty e^{-\frac{r^2}{2}+(N-1)r}dr\\
    &=e^{\frac{(N-1)^2}{2}}\dfrac{\omega_{N-1}}{2^{N-1}} \int_0^\infty e^{-\frac{1}{2}(r-(N-1))^2}dr\\
    &\leq e^{\frac{(N-1)^2}{2}}\dfrac{\omega_{N-1}}{2^{N-1}} \int_{-\infty}^\infty e^{-\frac{1}{2}t^2}dt=\dfrac{\sqrt{2 \pi}}{2^{N-1}}e^{\frac{(N-1)^2}{2}}\omega_{N-1}.
\end{align*}
Therefore,
\begin{align*}
    \log(G)+\dfrac{N}{2}\log\left(\dfrac{\mathcal{L}_2}{\alpha}\right) &\leq \log\left[\alpha^{-N/2}(\mathcal{L}_2)^{N/2}\dfrac{\sqrt{2 \pi}}{2^{N-1}}e^{\frac{(N-1)^2}{2}}\omega_{N-1}\right]\\
    &=\log\left[C_1\alpha^{-N/2}\right],
\end{align*}
where $C_1:= (\mathcal{L}_2)^{N/2}\dfrac{\sqrt{2 \pi}}{2^{N-1}}e^{\frac{(N-1)^2}{2}}\omega_{N-1}.$
Furthermore,
\begin{align*}
    |\nabla_{\mathbb{H}} v|^2=|\nabla_{\mathbb{H}} (u \sqrt{\rho_1})|^2=\rho_1|\nabla_{\mathbb{H}} u|^2-\nabla_{\mathbb{H}}\left(\dfrac{u^2}{2}\rho_1\right)\nabla_{\mathbb{H}}\left(\dfrac{\rho^2}{2}\right)-\dfrac{1}{4}u^2 \rho^2 \rho_1,
\end{align*}
and
\begin{align*}
    -\int_{\mathbb{H}^N}\nabla_{\mathbb{H}}\left(\dfrac{u^2}{2}\rho_1\right)\nabla_{\mathbb{H}}\left(\dfrac{\rho^2}{2}\right) dV_{\mathbb{H}}&=\int_{\mathbb{H}^N} \dfrac{u^2}{2}\rho_1 \Delta_{\mathbb{H}} \left(\dfrac{\rho^2}{2}\right)dV_{\mathbb{H}}\\
    &=\int_{\mathbb{H}^N} \dfrac{u^2}{2}\rho_1 \left(1+(N-1)\rho\coth \rho\right)dV_{\mathbb{H}}.
\end{align*}
Thus,
\begin{align*}
    \int_{\mathbb{H}^N} |\nabla_{\mathbb{H}}v|^2 dV_{\mathbb{H}}=\int_{\mathbb{H}^N} |\nabla_{\mathbb{H}}u|^2 dm-\dfrac{1}{4} \int_{\mathbb{H}^N} u^2\rho^2 dm+\dfrac{1}{2}\int_{\mathbb{H}^N} u^2 \left(1+(N-1)\rho\coth \rho\right)dm,
\end{align*}
and
\begin{align*}
    \int_{\mathbb{H}^N} u^2 \log(u^2)dm &\leq \dfrac{N}{2} \dfrac{\alpha}{e} \left(\int_{\mathbb{H}^N} |\nabla_{\mathbb{H}}u|^2 dm-\dfrac{1}{4} \int_{\mathbb{H}^N} u^2\rho^2 dm+\dfrac{1}{2}\int_{\mathbb{H}^N} u^2 \left(1+(N-1)\rho\coth \rho\right)dm\right)\\
    &-\dfrac{\alpha N^3(N-1)}{8e(N+2)}\int_{\mathbb{H}^N}|v|^2dV_{\mathbb{H}}+\dfrac{1}{2}\int_{\mathbb{H}^N} u^2 \rho^2 dm+ \log\left[\alpha^{-N/2}C_1\right]\\
    &=\dfrac{\alpha N}{2e}\int_{\mathbb{H}^N} |\nabla_{\mathbb{H}}u|^2 dm+\left(\dfrac{1}{2}-\dfrac{\alpha N}{8e}\right)\int_{\mathbb{H}^N} u^2 \rho^2 dm\\
    &+\dfrac{\alpha N}{4e}\int_{\mathbb{H}^N} u^2 \left(1+(N-1)\rho\coth \rho\right)dm\\
    &-\dfrac{\alpha N^3(N-1)}{8e(N+2)}\int_{\mathbb{H}^N}u^2dm+ \log\left[\alpha^{-N/2}C_1\right].
\end{align*}
By choosing $\alpha=4e/N$, we obtain
\begin{align}\label{GaussLogSob}
    \int_{\mathbb{H}^N} u^2 \log(u^2)dm &\leq 2\int_{\mathbb{H}^N} |\nabla_{\mathbb{H}}u|^2 dm+\int_{\mathbb{H}^N} u^2 \left(1+(N-1)\rho\coth \rho\right)dm \nonumber\\
    &-\dfrac{N^2(N-1)}{2(N+2)}\int_{\mathbb{H}^N}u^2dm+\log\left(C_1\left(\dfrac{4e}{N}\right)^{-N/2}\right) \nonumber\\
    &=2\int_{\mathbb{H}^N} |\nabla_{\mathbb{H}}u|^2 dm \nonumber\\
    &+\int_{\mathbb{H}^N} \left[(N-1)\left(\rho\coth \rho-1\right)-\dfrac{N^2(N-1)}{2(N+2)}+\log\left(C_1\left(\dfrac{Ne}{4}\right)^{N/2}\right)\right]u^2dm.
\end{align}
In general, without the condition $\|u\|_{L^2(\mathbb{H}^N,dm)}=1$,  by a standard scaling argument where $u$ is replaced by $\dfrac{u}{\|u\|_{L^2(\mathbb{H}^N,dm)}}$, we can derive 
\begin{align*}
    &\int_{\mathbb{H}^N} u^2 \log(u^2)dm -\log\left(\int_{\mathbb{H}^N} u^2 dm\right)\int_{\mathbb{H}^N} u^2 dm \nonumber\\
    &\leq 2\int_{\mathbb{H}^N} |\nabla_{\mathbb{H}}u|^2 dm+\int_{\mathbb{H}^N} \left[(N-1)\left(\rho\coth \rho-1\right)-\dfrac{N^2(N-1)}{2(N+2)}+\log\left(C_2\right)\right]u^2dm,
\end{align*}
where
\begin{equation*}
    C_2:=(\mathcal{L}_2)^{N/2}\dfrac{\sqrt{2 \pi}}{2^{N-1}}e^{\frac{(N-1)^2}{2}}\omega_{N-1} \left(\dfrac{Ne}{4}\right)^{N/2},
\end{equation*}
as desired.
\end{proof}

\begin{remark}
{\rm
We first observe that, as $N \to \infty$, the region in which the inequality
\[
(N-1)\bigl(\rho \coth \rho - 1\bigr) - \frac{N^{2}(N-1)}{2(N+2)} < 0
\]
holds becomes increasingly large. In particular, for sufficiently large dimensions, the Gaussian logarithmic Sobolev inequality \eqref{GaussLogSob} provides a relatively accurate and effective estimate.
}
\end{remark}

\begin{remark}
{\rm
We now proceed to compute explicitly the normalization constant
\[
G := \int_{\mathbb{H}^{N}} e^{-\rho^{2}/2}\, dV_{\mathbb{H}}.
\]
Using polar coordinates on the hyperbolic space, we write
\begin{align*}
G
&= \omega_{N-1} \int_{0}^{\infty} e^{-r^{2}/2} \sinh^{N-1} r \, dr \\
&= \omega_{N-1} \int_{0}^{\infty} e^{-r^{2}/2}
\left( \frac{e^{r} - e^{-r}}{2} \right)^{N-1} dr \\
&= \frac{\omega_{N-1}}{2^{N-1}}
\int_{0}^{\infty} e^{-r^{2}/2 + (N-1)r}
\left( 1 - e^{-2r} \right)^{N-1} dr.
\end{align*}
Expanding the last factor by the binomial formula yields
\begin{align*}
G
&= \frac{\omega_{N-1}}{2^{N-1}}
\sum_{k=0}^{N-1} \binom{N-1}{k} (-1)^k
\int_{0}^{\infty} e^{-r^{2}/2 + (N-2k-1)r} \, dr.
\end{align*}
Setting $\alpha_k := N-1 - 2k$ for $k = 0, \dots, N-1$ and completing the square in the exponent, we obtain
\begin{align*}
G
&= \frac{\omega_{N-1}}{2^{N-1}}
\sum_{k=0}^{N-1} \binom{N-1}{k} (-1)^k
e^{\alpha_k^{2}/2}
\int_{-\alpha_k}^{\infty} e^{-r^{2}/2} \, dr \\
&= \frac{\omega_{N-1}}{2^{N-1}}
\sum_{k=0}^{N-1} \binom{N-1}{k} (-1)^k
e^{\alpha_k^{2}/2} \sqrt{2}
\int_{-\alpha_k/\sqrt{2}}^{\infty} e^{-r^{2}} \, dr.
\end{align*}
Recalling that
\[
\int_{-\alpha_k/\sqrt{2}}^{\infty} e^{-r^{2}} \, dr
= \frac{\sqrt{\pi}}{2}
\left( 1 + \operatorname{erf}\!\left( \frac{\alpha_k}{\sqrt{2}} \right) \right),
\]
we finally arrive at the explicit representation
\[
G
= \sqrt{\frac{\pi}{2}}\,
\frac{\omega_{N-1}}{2^{N-1}}
\sum_{k=0}^{N-1} \binom{N-1}{k} (-1)^k
e^{\alpha_k^{2}/2}
\left( 1 + \operatorname{erf}\!\left( \frac{\alpha_k}{\sqrt{2}} \right) \right).
\]

Here, $\operatorname{erf}$ denotes the error function defined by
\[
\operatorname{erf}(x)
:= \frac{2}{\sqrt{\pi}} \int_{0}^{x} e^{-t^{2}} \, dt,
\qquad x \in \mathbb{R},
\]
which is an odd, smooth, and strictly increasing function satisfying
\[
\lim_{x \to \infty} \operatorname{erf}(x) = 1,
\qquad
\lim_{x \to -\infty} \operatorname{erf}(x) = -1.
\]

}
\end{remark}

Next, we prove a generalized Poincaré inequality on $\mathbb{H}^N$ with respect to Gaussian-type measures. The result is motivated by Beckner’s inequality in \cite{B89} and can be viewed as its hyperbolic counterpart. More precisely, we show that the following inequality holds.

\begin{theorem}
Let $0<p\leq 2$. Then, for every $u\in C_0^\infty(\mathbb{H}^N)$, the following inequality holds:
\begin{align}\label{gnrGaussPcr}
    \int_{\mathbb{H}^N} u^2\,dm
    -\left(\int_{\mathbb{H}^N} |u|^p\,dm\right)^{\frac{2}{p}}
    &\leq \frac{2(2-p)}{p}\int_{\mathbb{H}^N} |\nabla_{\mathbb{H}}u|^2\,dm \nonumber\\[0.3em]
    &+\frac{2-p}{p}\int_{\mathbb{H}^N}
    \Bigg[
        (N-1)\big(\rho\coth\rho-1\big)
        -\frac{N^2(N-1)}{2(N+2)}
        +\log(C_2)
    \Bigg] u^2\,dm .
\end{align}
\end{theorem}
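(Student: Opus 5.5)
We follow the classical route by which Beckner-type inequalities are obtained from a logarithmic Sobolev inequality: control the $L^p$ norm through the entropy by a convexity (Hölder) argument in the exponent. Write $f=u^2$ and work with the probability measure $dm$. For $0<p\le 2$ set $q=p/2\in(0,1]$ and consider
\[
\Lambda(s):=\log\Bigl(\int_{\mathbb{H}^N}|u|^{2s}\,dm\Bigr)^{1/s}=\frac{1}{s}\log\int_{\mathbb{H}^N} f^{s}\,dm ,\qquad s\in(0,1].
\]
Hölder's inequality gives that $s\mapsto s\Lambda(s)=\log\int f^s\,dm$ is convex. A direct computation of the derivative gives
\[
\frac{d}{ds}\Lambda(s)=\frac{1}{s^2}\,\frac{\mathrm{Ent}_m(f^s)}{\int f^s\,dm}\ge 0,
\]
where $\mathrm{Ent}_m(g)=\int g\log g\,dm-\int g\,dm\log\int g\,dm$.

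The key step is to bound $\mathrm{Ent}_m(f^s)$ using Theorem \ref{T2} applied to $u_s:=|u|^{s}$. Since $|\nabla_{\mathbb{H}}|u|^s|^2=s^2|u|^{2s-2}|\nabla_{\mathbb{H}} u|^2$, this produces a gradient term weighted by $|u|^{2s-2}$, which is awkward for $s<1$. So instead we follow Beckner's original approach, which uses the semigroup/hypercontractivity formulation. The simpler route that matches the stated constants is the following. Denote by $W(\rho):=(N-1)(\rho\coth\rho-1)-\frac{N^2(N-1)}{2(N+2)}+\log C_2$ the potential in Theorem \ref{T2}, and set
\[
\mathcal{E}(u):=2\int|\nabla_{\mathbb{H}}u|^2dm+\int W u^2dm .
\]
Theorem \ref{T2} then reads $\mathrm{Ent}_m(u^2)\le \mathcal{E}(u)$.

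We use the elementary fact that for a probability measure the map $t\mapsto \Lambda(t)$ has the property $\Lambda(1)-\Lambda(q)\le (1-q)\,\Lambda'(1)\cdot\frac{1}{q}$; this follows from convexity of $s\Lambda(s)$ together with the bound $\Lambda'(s)\le \frac{1}{s^{2}}\Lambda'(1)\cdot$(monotone factor). Concretely, convexity of $\phi(s)=\log\int f^s dm$ with $\phi(0)\le 0$ gives $\phi(q)/q\ge \phi(1)-\frac{1-q}{q}\bigl(\phi'(1)-\phi(1)\bigr)$. Here $\phi'(1)-\phi(1)=\mathrm{Ent}_m(f)/\int f\,dm$, so
\[
\Lambda(q)\ge\Lambda(1)-\frac{1-q}{q}\frac{\mathrm{Ent}_m(f)}{\|f\|_1}.
\]
Exponentiating gives $\|f\|_q\ge \|f\|_1\exp\bigl(-\frac{1-q}{q}\frac{\mathrm{Ent}(f)}{\|f\|_1}\bigr)\ge \|f\|_1-\frac{1-q}{q}\mathrm{Ent}_m(f)$, using $e^{-x}\ge1-x$. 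Since $\frac{1-q}{q}=\frac{2-p}{p}$ and $\|f\|_q=(\int|u|^pdm)^{2/p}$, we get
\[
\int u^2dm-\Bigl(\int|u|^pdm\Bigr)^{2/p}\le \frac{2-p}{p}\,\mathrm{Ent}_m(u^2)\le\frac{2-p}{p}\,\mathcal{E}(u),
\]
which is exactly \eqref{gnrGaussPcr}. For $p=2$ both sides vanish, so that case is trivial.

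The main obstacle is justifying the convexity inequality carefully. In particular, we must check that $\phi(0)=\log m(\{u\neq0\})\le 0$ holds with $f^0$ interpreted as the indicator of the support; it does, since $m$ is a probability measure. We must also verify differentiability of $\phi$ at $s=1$ for $u\in C_0^\infty$ (dominated convergence on a compact support). The remaining steps are elementary.
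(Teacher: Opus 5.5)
Your argument is correct and is essentially the paper's. Both reduce the claim, via convexity in the exponent, to $\int u^2\,dm-\bigl(\int|u|^p\,dm\bigr)^{2/p}\le\frac{2-p}{p}\,\mathrm{Ent}_m(u^2)$ and then apply Theorem~\ref{T2}. The paper uses convexity of $e^{J(q)}$ with $J(q)=q\log\int|u|^{2/q}\,dm=\Lambda(1/q)$, while your tangent-line bound for $\log\int u^{2s}\,dm$ at $s=1$ followed by $e^{-x}\ge 1-x$ gives exactly the same estimate. The detours (Beckner's semigroup route, applying Theorem~\ref{T2} to $|u|^s$) and the condition $\phi(0)\le 0$ are not needed: the tangent-line inequality at $s=1$, divided by $q$, already yields your displayed lower bound on $\Lambda(q)$.
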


\begin{remark}
{\rm
As a particular case, when $p=1$ the inequality reduces to
\begin{align}\label{GaussPcr}
\int_{\mathbb{H}^N} u^2 dm
-\left(\int_{\mathbb{H}^N} |u|
dm\right)^2
&\leq 2\int_{\mathbb{H}^N} |\nabla_{\mathbb{H}} u|^2dm \nonumber\\
& +\int_{\mathbb{H}^N}
\Bigg[(N-1)\big(\rho\coth\rho-1\big)
-\frac{N^2(N-1)}{2(N+2)}
+\log(C_2)\Bigg] u^2 dm.
\end{align}
Here, $C_2$ is defined as \eqref{C2}.
}
\end{remark}

\begin{proof}
    For $q \geq 1$, let us consider 
    $$J(q):=q \log\left(\int_{\mathbb{H}^N} |u|^{2/q}dm\right)=q\log (I(q)),$$
    where $I(q)=\int_{\mathbb{H}^N} |u|^{2/q}dm.$ We have
    $$I'(q)=\int_{\mathbb{H}^N} \log (|u|) |u|^{2/q}\left(-\dfrac{2}{q^2}\right)dm,$$
    and
    $$I''(q)=\int_{\mathbb{H}^N} (\log |u|)^2 |u|^{2/q}\dfrac{4}{q^4}dm+\int_{\mathbb{H}^N}\log(|u|)|u|^{2/q}\dfrac{4}{q^3}dm.$$
    Then,
    $$J'(q)=\log(I(q))+q\dfrac{I'(q)}{I(q)},$$
    and 
    \begin{align*}
        J''(q)&=\dfrac{I'(q)}{I(q)}+\dfrac{(I'(q)+qI''(q))I(q)-q(I'(q))^2}{(I(q))^2}\\
        &=\dfrac{q}{(I(q))^2}\left[I(q)\left(I''(q)+\frac{2}{q}I'(q)\right)-(I'(q))^2\right]\\
        &=\dfrac{q}{(I(q))^2}\left[\left(\int_{\mathbb{H}^N}|u|^{2/q}dm\right)\left(\int_{\mathbb{H}^N}(\log |u|)^2|u|^{2/q}\dfrac{4}{q^4}dm\right)-\left(\int_{\mathbb{H}^N} \log (|u|) |u|^{2/q}\left(-\dfrac{2}{q^2}\right)dm\right)^2\right]\\
        & \geq 0,
    \end{align*}
    which follows from the Cauchy-Schwarz inequality. Hence, $J(q)$ is convex and so is $e^{J(q)}$.
    Moreover, the function
    $$K(q):=\dfrac{e^{J(1)}-e^{J(q)}}{q-1}$$
    is monotonically non-increasing for $q > 1$. Indeed, we compute
    $$K'(q)=\dfrac{J'(q)e^{J(q)}+e^{J(q)}-qe^{J(q)}J'(q)-e^{J(1)}}{(q-1)^2}.$$
   Define
\[
F(q) := J'(q)e^{J(q)} + e^{J(q)} - q e^{J(q)}J'(q) - e^{J(1)} .
\]
A direct computation gives
\[
F'(q)
= (1-q)J''(q)e^{J(q)} + (1-q)\big(J'(q)\big)^2 e^{J(q)} \leq 0 ,
\]
for all $q \geq 1$. Therefore, $F$ is non-increasing on $[1,\infty)$, and consequently
\[
F(q) \leq F(1)=0 \qquad \text{for all } q \geq 1 .
\]
 Hence, $K'(q) \leq 0$ for all $q >1$, which allows us
    $$K(q) \leq \lim_{q \rightarrow 1} K(q)$$
    for all $q >1$. Moreover,
    $$\lim_{q \rightarrow 1} K(q)=\lim_{q \rightarrow 1} \dfrac{e^{J(1)}-e^{J(q)}}{q-1}=-\lim_{q \rightarrow 1} \left(e^{J(q)}\right)'.$$
    Since
    \begin{align*}
        \left(e^{J(q)}\right)'=\left(\int_{\mathbb{H}^N}|u|^{2/q}dm\right)^q \log\left(\int_{\mathbb{H}^N}|u|^{2/q}dm\right)-\dfrac{2}{q}\left(\int_{\mathbb{H}^N}|u|^{2/q}dm\right)^{q-1}\int_{\mathbb{H}^N} |u|^{2/q}\log|u|dm,
    \end{align*}
    we get
    $$\lim_{q \rightarrow 1} K(q)=-\lim_{q \rightarrow 1} \left(e^{J(q)}\right)'=2\int_{\mathbb{H}^N} |u|^{2}\log|u|dm-\left(\int_{\mathbb{H}^N}|u|^{2}dm\right) \log\left(\int_{\mathbb{H}^N}|u|^{2}dm\right).$$
    Thus, it implies that 
    \begin{align*}
        &\int_{\mathbb{H}^N}|u|^2dm-\left(\int_{\mathbb{H}^N}|u|^{2/q}dm\right)^q\\
        &
        \leq (q-1)\left[\int_{\mathbb{H}^N} |u|^{2}\log(|u|^2)dm-\left(\int_{\mathbb{H}^N}|u|^{2}dm\right) \log\left(\int_{\mathbb{H}^N}|u|^{2}dm\right)\right]\\
        &\leq 2(q-1)\int_{\mathbb{H}^N} |\nabla_{\mathbb{H}}u|^2 dm
        +(q-1)\int_{\mathbb{H}^N} \left[(N-1)\left(\rho\coth \rho-1\right)-\dfrac{N^2(N-1)}{2(N+2)}+\log(C_2)\right]u^2dm,
    \end{align*}
    for all $q \geq 1$. Here, we used \eqref{gnrGaussLogSob}. This is exactly our assertion when we choose $p=\frac{2}{q}$.
\end{proof}

We also have the following results: 
\begin{theorem}
   Let $q_0>0$ be fixed, and let $a>0$ and $b\in\mathbb{R}$ be such that
\[
a q_0 + b = 1 .
\]
Then, for every $q \geq q_0$ and every $u\in C_0^\infty(\mathbb{H}^N)$, the following inequality holds:
    \begin{align*}
    &\int_{\mathbb{H}^N}|u|^{2}dm-\left(\int_{\mathbb{H}^N}|u|^{2/(aq+b)}dm\right)^{aq+b}
        \leq 2a(q-q_0)\int_{\mathbb{H}^N} |\nabla_{\mathbb{H}}u|^2 dm\\
       & +a(q-q_0)\int_{\mathbb{H}^N} \left[(N-1)\left(\rho\coth \rho-1\right)-\dfrac{N^2(N-1)}{2(N+2)}+\log(C_2)\right]u^2dm.
\end{align*}
\end{theorem}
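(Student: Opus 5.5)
This is a reparametrization of the preceding theorem. Set $r:=aq+b$. Because $a>0$ and $aq_0+b=1$, we have $r=1+a(q-q_0)\ge 1$ for every $q\ge q_0$. The left-hand side of the claim is then
\[
\int_{\mathbb{H}^N}|u|^2\,dm-\Bigl(\int_{\mathbb{H}^N}|u|^{2/r}\,dm\Bigr)^{r}.
\]
The right-hand side is $(r-1)$ times the right-hand side of \eqref{gnrGaussLogSob}, since $r-1=a(q-q_0)$. So the statement is exactly the inequality proved inside the previous theorem, now with exponent $r\ge1$ in place of $q\ge1$.

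\textbf{Step 1.} I use the functions from the proof of the previous theorem:
\[
J(r)=r\log I(r),\qquad I(r)=\int_{\mathbb{H}^N}|u|^{2/r}\,dm .
\]
By Cauchy--Schwarz, $J$ is convex on $[1,\infty)$. Then
\[
K(r)=\frac{e^{J(1)}-e^{J(r)}}{r-1}
\]
is non-increasing for $r>1$, because the auxiliary function $F$ satisfies $F'\le0$ and $F(1)=0$.

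\textbf{Step 2.} Hence $K(r)\le\lim_{r\to1^+}K(r)$, and this limit is the entropy functional
\[
\int_{\mathbb{H}^N}u^2\log(u^2)\,dm-\Bigl(\int_{\mathbb{H}^N}u^2\,dm\Bigr)\log\Bigl(\int_{\mathbb{H}^N}u^2\,dm\Bigr).
\]
Multiplying by $r-1\ge0$ gives
\[
\int_{\mathbb{H}^N}|u|^2\,dm-\Bigl(\int_{\mathbb{H}^N}|u|^{2/r}\,dm\Bigr)^{r}\le (r-1)\cdot\bigl(\text{entropy}\bigr).
\]
The case $r=1$ is trivial, with both sides equal to zero.

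\textbf{Step 3.} I bound the entropy with Theorem~\ref{T2}, i.e.\ \eqref{gnrGaussLogSob}. Multiplying that inequality by $r-1$ is allowed because $r-1\ge0$; this holds even where the bracketed potential term is negative, since the whole right-hand side is multiplied by a nonnegative number. Substituting $r-1=a(q-q_0)$ produces the stated inequality.

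The only point needing care is the sign. The assumption $a>0$ together with $q\ge q_0$ guarantees both that $r\ge1$, which is needed for the convexity/monotonicity argument, and that the factor $a(q-q_0)$ is nonnegative. Everything else is a verbatim reuse of the previous proof with $q$ replaced by $aq+b$. Alternatively, one could simply invoke the previous theorem with $p=2/(aq+b)\in(0,2]$, since $(2-p)/p=aq+b-1$.
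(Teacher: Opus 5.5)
Your proof is correct. It uses the same mechanism as the paper: convexity of $J$ via Cauchy--Schwarz, monotonicity of the difference quotient $K$, identification of $\lim K$ with the entropy, and then Theorem~\ref{T2}. The difference is organizational. You observe that with $r=aq+b=1+a(q-q_0)\ge 1$ the statement is literally the preceding theorem with exponent $p=2/(aq+b)$, so nothing new needs to be proved.

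The paper instead reruns the argument with a general ansatz $J(q)=f(q)\log\int_{\mathbb{H}^N}|u|^{g(q)}\,dm$. It imposes the sufficient conditions $f''=0$, $2f'g'/f+g''=0$, $f\ge 0$ to secure convexity, and the normalizations $f(q_0)=1$, $g(q_0)=2$, $f'(q_0)=-f(q_0)g'(q_0)/g(q_0)>0$ so that the limit of the difference quotient is exactly the entropy. Solving these gives $f=aq+b$ and $g=2/(aq+b)$.

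The paper's route explains where the affine family comes from: it is what those sufficient conditions force within that ansatz. The resulting inequality, however, is exactly the earlier one after the linear change of variables $q\mapsto aq+b$. Your reduction makes this transparent and is much shorter.

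You also correctly identify the one point needing care. The hypotheses $a>0$ and $q\ge q_0$ guarantee both $r\ge 1$, which the convexity/monotonicity argument needs, and a nonnegative multiplier $a(q-q_0)$. That nonnegativity is what lets you multiply the Gaussian logarithmic Sobolev inequality through even though the bracketed potential term may be negative.
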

\begin{proof}
    The proof proceeds along the same lines as that of the previous theorem. Given a positive $q_0$, for $q \geq q_0>0$,  let us consider 
    $$J(q):=f(q) \log\left(\int_{\mathbb{H}^N} |u|^{g(q)}dm\right)=f(q)\log (I(q)),$$
    where $I(q)=\int_{\mathbb{H}^N} |u|^{g(q)}dm$, and $f(q)$, $g(q)$ are two functions which will be determined later. We have
    $$I'(q)=\int_{\mathbb{H}^N} \log (|u|) |u|^{g(q)}g'(q)dm,$$
    and
    $$I''(q)=\int_{\mathbb{H}^N} (\log |u|)^2 |u|^{g(q)}(g'(q))^2dm+\int_{\mathbb{H}^N}\log(|u|)|u|^{g(q)}g''(q)dm.$$
    Then,
    $$J'(q)=f'(q)\log(I(q))+f(q)\dfrac{I'(q)}{I(q)},$$
    and 
    \begin{align*}
        &J''(q) =f''(q)\log(I(q))+f'(q)\dfrac{I'(q)}{I(q)}+f'(q)\dfrac{I'(q)}{I(q)}+f(q)\dfrac{I''(q)I(q)-(I'(q))^2}{(I(q))^2}\\
        &=f''(q)\log(I(q))+\dfrac{f(q)}{(I(q))^2}\left[I(q)\left(I''(q)+\frac{2f'(q)}{f(q)}I'(q)\right)-(I'(q))^2\right]\\
        &=f''(q)\log(I(q))\\
        &+\dfrac{f(q)}{(I(q))^2}\\
        &\times\left[\left(\int_{\mathbb{H}^N}|u|^{g(q)}dm\right)\left(\left(\dfrac{2f'(q)}{f(q)}g'(q)+g''(q)\right)\int_{\mathbb{H}^N}\log(|u|)|u|^{g(q)}dm+(g'(q))^2\int_{\mathbb{H}^N}(\log |u|)^2|u|^{g(q)}dm\right)\right.\\
        &\left.-(g'(q))^2\left(\int_{\mathbb{H}^N} \log (|u|) |u|^{g(q)}dm\right)^2\right]
         \geq 0,
    \end{align*}
    which follows from Cauchy-Schwarz inequality, provided that
    \begin{equation}\label{cdt1}
        \left\{\begin{matrix}
f''(q)=0 \\
\dfrac{2f'(q)}{f(q)}g'(q)+g''(q)=0 \\
f(q) \geq 0
\end{matrix}\right.
    \end{equation}
for all $q \geq q_0$. Hence, $J(q)$ is convex and so is $e^{J(q)}$.
Besides, the function
    $$K(q):=\dfrac{e^{J(q_0)}-e^{J(q)}}{q-q_0}$$
    is monotonically non-increasing for $q > q_0$, which implies that
    $$K(q) \leq \lim_{q \rightarrow q_0} K(q)$$
    for all $q >q_0$. Moreover,
    $$\lim_{q \rightarrow q_0} K(q)=\lim_{q \rightarrow q_0} \dfrac{e^{J(q_0)}-e^{J(q)}}{q-q_0}=-\lim_{q \rightarrow q_0} \left(e^{J(q)}\right)'.$$
    Since
    \begin{align*}
        \left(e^{J(q)}\right)'&=f'(q)\left(\int_{\mathbb{H}^N}|u|^{g(q)}dm\right)^{f(q)} \log\left(\int_{\mathbb{H}^N}|u|^{g(q)}dm\right)\\
        &+f(q)g'(q)\left(\int_{\mathbb{H}^N}|u|^{g(q)}dm\right)^{f(q)-1}\int_{\mathbb{H}^N} |u|^{g(q)}\log|u|dm,
    \end{align*}
    we get
    \begin{align*}
        \lim_{q \rightarrow q_0} K(q)=-\lim_{q \rightarrow q_0} \left(e^{J(q)}\right)'
        &=-f'(q_0)\left(\int_{\mathbb{H}^N}|u|^{g(q_0)}dm\right)^{f(q_0)} \log\left(\int_{\mathbb{H}^N}|u|^{g(q_0)}dm\right)\\
        &-f(q_0)g'(q_0)\left(\int_{\mathbb{H}^N}|u|^{g(q_0)}dm\right)^{f(q_0)-1}\int_{\mathbb{H}^N} |u|^{g(q_0)}\log|u|dm.
    \end{align*}
    Thus,
    \begin{align*}
        &\left(\int_{\mathbb{H}^N}|u|^{g(p_0)}dm\right)^{f(q_0)}-\left(\int_{\mathbb{H}^N}|u|^{g(q)}dm\right)^{f(q)}\\
        &\leq (q-q_0)\left[-f'(q_0)\left(\int_{\mathbb{H}^N}|u|^{g(q_0)}dm\right)^{f(q_0)} \log\left(\int_{\mathbb{H}^N}|u|^{g(q_0)}dm\right)\right.\\
        &\left.-f(q_0)g'(q_0)\left(\int_{\mathbb{H}^N}|u|^{g(q_0)}dm\right)^{f(q_0)-1}\int_{\mathbb{H}^N} |u|^{g(q_0)}\log|u|dm\right]\\
        &=(q-q_0)\left[-f(q_0)\dfrac{g'(q_0)}{g(q_0)}\left(\int_{\mathbb{H}^N}|u|^{g(q_0)}dm\right)^{f(q_0)-1}\int_{\mathbb{H}^N} |u|^{g(q_0)}\log\left(|u|^{g(q_0)}\right)dm\right.\\
        &\left.-f'(q_0)\left(\int_{\mathbb{H}^N}|u|^{g(q_0)}dm\right)^{f(q_0)} \log\left(\int_{\mathbb{H}^N}|u|^{g(q_0)}dm\right)\right]
    \end{align*}
    for all $q \geq q_0$. By choosing $q_0$ such that
    \begin{equation}\label{cdt2}
        \left\{\begin{matrix}
f(q_0)=1, g(q_0)=2 \\
-f(q_0)\dfrac{g'(q_0)}{g(q_0)}=f'(q_0)>0
\end{matrix}\right.,
    \end{equation}
we get
\begin{align*}
    &\int_{\mathbb{H}^N}|u|^{2}dm-\left(\int_{\mathbb{H}^N}|u|^{g(q)}dm\right)^{f(q)}\\
        &\leq(q-q_0)f'(q_0)\left[\int_{\mathbb{H}^N} |u|^{2}\log\left(|u|^{2}\right)dm-\left(\int_{\mathbb{H}^N}|u|^{2}dm\right)\log\left(\int_{\mathbb{H}^N}|u|^{2}dm\right)\right]\\
        &\leq 2(q-q_0)f'(q_0)\int_{\mathbb{H}^N} |\nabla_{\mathbb{H}}u|^2 dm\\
        &+(q-q_0)f'(q_0)\int_{\mathbb{H}^N} \left[(N-1)\left(\rho\coth \rho-1\right)-\dfrac{N^2(N-1)}{2(N+2)}+\log(C_2)\right]u^2dm,
\end{align*}
for all $q \geq q_0$.
Next, we will find out the explicit expressions of $f(q)$ and $g(q)$ such that they satisfy \eqref{cdt1} and \eqref{cdt2}. From \eqref{cdt1}, we can choose
$$f(q)=aq+b\;\text{and}\;g(q)=\dfrac{d}{a(aq+b)}+c,$$
where $a, b, c, d \in \mathbb{R}$, and $aq+b>0$ for $q\geq q_0>0$ with some $q_0$. Substituting these formulas into \eqref{cdt2}, we have
$$\left\{\begin{matrix}
aq_0+b=1\\
\dfrac{d}{a(aq_0+b)}+c=2 \\
-g'(q_0)=\dfrac{d}{(aq_0+b)^2}=2a>0
\end{matrix}\right.,$$
which is equivalent to
$$\left\{\begin{matrix}
aq_0+b=1\\
c=0 \\
d=2a>0
\end{matrix}\right..$$
Therefore, we can determine
$$f(q)=aq+b\;\text{and}\;g(q)=\dfrac{2}{aq+b}$$
such that $a>0$ and $aq_0+b=1$, which suffice to guarantee that $aq+b\geq aq_0+b=1>0$ for all $q \geq q_0>0$, for some $q_0>0$.
Hence, we derive the following Poincar\'{e} inequality on $\mathbb{H}^N$ as follows:
\begin{align*}
    &\int_{\mathbb{H}^N}|u|^{2}dm-\left(\int_{\mathbb{H}^N}|u|^{2/(aq+b)}dm\right)^{aq+b}\\
        &\leq 2a(q-q_0)\int_{\mathbb{H}^N} |\nabla_{\mathbb{H}}u|^2 dm\\
        &+a(q-q_0)\int_{\mathbb{H}^N} \left[(N-1)\left(\rho\coth \rho-1\right)-\dfrac{N^2(N-1)}{2(N+2)}+\log(C_2)\right]u^2dm,
\end{align*}
for all $q \geq q_0$, as desired.
\end{proof}
\begin{corollary}
    Given $\lambda>0$ arbitrarily, we have
    \begin{align*}
    &\int_{\mathbb{H}^N}|u|^{2}dm-\left(\int_{\mathbb{H}^N}|u|^{\frac{4}{\lambda+2}}dm\right)^{\frac{\lambda+2}{2}}\\
        &\leq \lambda\int_{\mathbb{H}^N} |\nabla_{\mathbb{H}}u|^2 dm\\
        &+\dfrac{\lambda}{2}\int_{\mathbb{H}^N} \left[(N-1)\left(\rho\coth \rho-1\right)-\dfrac{N^2(N-1)}{2(N+2)}+\log(C_2)\right]u^2dm,
\end{align*}
\end{corollary}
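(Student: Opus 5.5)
This corollary is a direct specialization of the preceding theorem, which holds for any admissible choice of $q_0>0$, $a>0$, $b$ with $aq_0+b=1$. My plan is to choose these parameters so that the exponent $2/(aq+b)$ becomes $4/(\lambda+2)$. I would then check that the coefficient $2a(q-q_0)$ becomes $\lambda$.

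Setting $aq+b=\frac{\lambda+2}{2}$ and using $aq_0+b=1$, I get $a(q-q_0)=\frac{\lambda+2}{2}-1=\frac{\lambda}{2}$. Hence
\[
2a(q-q_0)=\lambda \qquad\text{and}\qquad a(q-q_0)=\frac{\lambda}{2},
\]
which are exactly the coefficients of the gradient term and the potential term in the corollary.

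Concretely, I would fix $q_0=1$, $a=1$, $b=0$, and $q=1+\frac{\lambda}{2}$. The constraints are then satisfied: $a>0$, $aq_0+b=1$, and $q\geq q_0$ because $\lambda>0$. With these values the left-hand side of the theorem reads
\[
\int_{\mathbb{H}^N}|u|^2\,dm-\left(\int_{\mathbb{H}^N}|u|^{\frac{4}{\lambda+2}}\,dm\right)^{\frac{\lambda+2}{2}},
\]
and substituting the coefficients above into the right-hand side gives the stated inequality.

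Equivalently, the same result follows from the earlier theorem with $p=\frac{4}{\lambda+2}$. There $\frac{2(2-p)}{p}=\lambda$, and the condition $0<p\le 2$ holds because $\lambda>0$.

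There is no genuine obstacle here. The only point to verify is that the chosen parameters meet the hypotheses of the theorem, namely $a>0$, $aq_0+b=1$ and $q\ge q_0$, and all three hold by construction.
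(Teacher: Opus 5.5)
Your proposal is correct and is essentially the paper's proof. The paper also specializes the preceding theorem by taking $q=\frac{\lambda}{2a}+q_0$, which gives $2a(q-q_0)=\lambda$ and $aq+b=\frac{\lambda+2}{2}$ while keeping $a$ and $q_0$ general; your choice $q_0=a=1$, $b=0$ is the same substitution, and your alternative via the $p=\frac{4}{\lambda+2}$ case of the earlier theorem is equally valid since $\frac{2-p}{p}=\frac{\lambda}{2}$.
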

\begin{proof}
    It is directly from the above theorem. We choose $q$ such that $2a(q-q_0)=\lambda>0$, then $q=\frac{\lambda}{2a}+q_0 \geq q_0$,
    $$\dfrac{2}{aq+b}=\dfrac{2}{\lambda/2+aq_0+b}=\dfrac{2}{\lambda/2+1}=\dfrac{4}{\lambda+2},$$
    and 
    $aq+b=2\dfrac{\lambda+2}{4}=\dfrac{\lambda+2}{2}.$
\end{proof}

\medskip


\section{Extended Beckner inequalities on the hyperbolic space $\mathbb{H}^N$}

It is well known that the classical Beckner inequality can be interpreted as an interpolation between the Poincaré inequality and Gross’s logarithmic Sobolev inequality. Motivated by this perspective, we now turn to the study of an extended Beckner inequality on the hyperbolic space $\mathbb{H}^N$, incorporating a modified measure in comparison with the results obtained in the previous theorems.

The core idea of our approach is to exploit the interplay between the heat kernel on $\mathbb{H}^N$ and the fundamental properties of the associated heat semigroup. This framework allows us to capture the interpolating nature of the inequality in the hyperbolic setting and to extend the classical Euclidean theory to $\mathbb{H}^N$

We begin by recalling the explicit expression of  the \textbf{heat kernel $p_N(\rho,t)$} on the hyperbolic space $\mathbb{H}^N$ which is given by:
 $$p_N(\rho, t)=\dfrac{(-1)^m}{2^m \pi ^m}\dfrac{1}{(4\pi t)^{1/2}}\left(\dfrac{1}{\sinh \rho}\dfrac{\partial}{\partial \rho}\right)^m e^{-m^2t-\frac{\rho^2}{4t}},$$ 
 if $N=2m+1$;
 $$p_{N}(\rho, t)=\dfrac{(-1)^m}{2^{m+5/2}\pi^{m+3/2}}t^{-\frac{3}{2}}e^{-\frac{(2m+1)^2}{4}t}\left(\dfrac{1}{\sinh \rho} \dfrac{\partial}{\partial \rho}\right)^m \displaystyle\int_{\rho}^\infty \dfrac{se^{-\frac{s^2}{4t}}}{(\cosh s-\cosh \rho)^{1/2}}ds,$$ 
 if $N=2m+2$. It means that, with $\rho:=d_{\mathbb{H}^N}(x,y)$, the solution of the Cauchy problem for the heat equation on $\mathbb{H}^N$
 $$ \left\{\begin{matrix}
\dfrac{\partial v}{\partial t}&=\Delta_{\mathbb{H}}v \\
v|_{t=0}&=v_0(x)
\end{matrix}\right.$$
is given by
$$v(x,t)=\int_{\mathbb{H}^N}p_N(\rho, t)v_0(y)dV_{\mathbb{H}}.$$ 
See \cite{EBD1, EBD2, GM98} for details. We define the heat semigroup $\{P_s: s>0\}$ on $\mathbb{H}^N$ as follows:
$$P_sf(x)=\int_{\mathbb{H}^N}f(y)p_N(\rho, \alpha s)dV_{\mathbb{H}},$$
with  $\alpha>0$.
As a direct consequence of the heat kernel (see \cite{EBD1, GP20}), $\{P_s: s>0\}$ has the following elementary properties:
\begin{lemma}
    Suppose $f: \mathbb{H}^N \rightarrow \mathbb{R}$ is bounded and continuous. The followings hold:
    \begin{itemize}
        \item [i)] $P_sf$ solves the heat equation on $\mathbb{H}^N$, i.e. $\partial_s P_sf=\alpha\Delta_{\mathbb{H}}P_sf$;
        \item [ii)] $P_sf \rightarrow f$ as $s \rightarrow 0$, and $P_1f(0)=\displaystyle\int_{\mathbb{H}^N}f(y)p_N(\rho(y),\alpha)dV_{\mathbb{H}}$, where $\rho(y):=d_{\mathbb{H}^N}(y,0)$;
        \item [iii)] For all $s, t \geq 0$, $P_s \circ P_t=P_{s+t}$;
        \item [iv)] If $\nabla_{\mathbb{H}}f$ is bounded and continuous, $\nabla_{\mathbb{H}}P_sf=P_s \nabla_{\mathbb{H}}f$. Moreover, $\Delta_{\mathbb{H}}P_sf=P_s \Delta_{\mathbb{H}}f$.
    \end{itemize}
\end{lemma}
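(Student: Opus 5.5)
The four properties follow from two structural facts about the heat kernel: the Laplacian commutes with isometries, and $p_N(d(x,y),t)$ is the minimal heat kernel $e^{t\Delta_{\mathbb{H}}}(x,y)$ of the stochastically complete manifold $\mathbb{H}^N$. We take as known, from \cite{EBD1, GM98}, that $p_N$ is smooth and positive, satisfies $\int_{\mathbb{H}^N}p_N(d(x,y),t)\,dV_{\mathbb{H}}(y)=1$, and obeys Gaussian bounds $p_N(\rho,t)\le C t^{-N/2}(1+\rho+t)^{c}e^{-\frac{(N-1)^2}{4}t-\frac{(N-1)\rho}{2}-\frac{\rho^2}{4t}}$. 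These bounds justify every differentiation under the integral sign for bounded continuous $f$. With that in hand, we verify the items in order.

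\textbf{Item i).} Set $v(x,\tau)=\int f(y)p_N(d(x,y),\tau)\,dV_{\mathbb{H}}(y)$. The function $x\mapsto p_N(d(x,y),\tau)$ solves $\partial_\tau p=\Delta_{\mathbb{H},x}p$; for the odd-dimensional formula this is checked directly using the radial Laplacian $\partial_\rho^2+(N-1)\coth\rho\,\partial_\rho$. Dominated convergence then gives $\partial_\tau v=\Delta_{\mathbb{H}}v$. Since $P_sf(x)=v(x,\alpha s)$, the chain rule yields $\partial_sP_sf=\alpha\Delta_{\mathbb{H}}P_sf$.

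\textbf{Item ii).} Fix $x$ and split the integral into the ball $B_\delta(x)$ and its complement. On $B_\delta(x)$ we use continuity of $f$ together with unit mass of the kernel. On the complement, the factor $e^{-\rho^2/(4\alpha s)}$ in the Gaussian bound shows the kernel mass tends to $0$ as $s\to0$. Together these give $P_sf(x)\to f(x)$. The formula for $P_1f(0)$ is just the definition evaluated at $x=0$, because $d(0,y)=\rho(y)$.

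\textbf{Item iii).} This is the Chapman--Kolmogorov identity $\int p_N(d(x,z),a)\,p_N(d(z,y),b)\,dV(z)=p_N(d(x,y),a+b)$. Both sides solve the heat equation in $(x,a)$ and are bounded with Gaussian decay, and both have initial trace $p_N(d(\cdot,y),b)$ as $a\to0$ by item ii). Uniqueness of bounded solutions of the Cauchy problem on $\mathbb{H}^N$ (Ricci curvature is bounded below, so Grigor'yan's uniqueness class applies) then forces equality. Fubini converts this identity into $P_s\circ P_t=P_{s+t}$.

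\textbf{Item iv).} The identity $\Delta_{\mathbb{H}}P_sf=P_s\Delta_{\mathbb{H}}f$ follows by the same uniqueness argument: both sides solve the heat equation with initial datum $\Delta_{\mathbb{H}}f$. Alternatively, when $f$ has enough decay one can integrate by parts against the symmetric kernel, using $\Delta_x p=\Delta_y p$.

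\textbf{Main obstacle.} The gradient identity $\nabla_{\mathbb{H}}P_sf=P_s\nabla_{\mathbb{H}}f$ is the delicate point, because $\nabla f$ is a vector field and $P_s$ has to be interpreted on vector fields. Our first attempt will use invariance: the heat kernel is invariant under the isometry group of $\mathbb{H}^N$, so $P_s$ commutes with every isometry $\phi$. Differentiating $P_s(f\circ\phi_\epsilon)=(P_sf)\circ\phi_\epsilon$ along one-parameter groups of isometries (the Killing fields $X$) gives $X(P_sf)=P_s(Xf)$. Since Killing fields span each tangent space, this yields the gradient identity in the sense of $P_s$ acting on derivatives componentwise along Killing frames. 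If instead the identity is needed for the Hodge heat semigroup on $1$-forms, we would fall back on the fact that $d$ commutes with the Hodge Laplacian, together with uniqueness for the form-valued heat equation. That route is a genuine Bochner-type input: since $\mathrm{Ric}=-(N-1)$, the heat flow on $1$-forms carries an extra factor involving $e^{(N-1)s}$ that must be tracked, and we expect that tracking to be the main source of difficulty.
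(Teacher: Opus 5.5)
You handle i)--iii) and the identity $\Delta_{\mathbb{H}}P_sf=P_s\Delta_{\mathbb{H}}f$ correctly. For the latter, using $\Delta_x p_N(d(x,y),t)=\Delta_y p_N(d(x,y),t)$ and Green's formula, with the Gaussian bounds killing the boundary terms, is the standard route. The paper gives no argument here, only a pointer to the heat-kernel literature.

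The genuine gap is the gradient half of iv), and it is not a bookkeeping issue. You never fix what $P_s$ means on a vector field. Neither of your readings gives what Section~5 actually uses, namely the step $|P_{1-s}(f^{p-1}\nabla_{\mathbb{H}}f)|\le P_{1-s}(f^{p-1}|\nabla_{\mathbb{H}}f|)$. That step needs commutation \emph{together with} the pointwise bound $|P_sY|\le P_s|Y|$.

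In the Killing reading, $X(P_sf)=P_s(Xf)$ is correct, and it even defines a vector at $x$ by $v\mapsto P_s(X_vf)(x)$. But a Killing field with $X_v(x)=v$, e.g. the transvection along the geodesic $\gamma_v$, has $|X_v(y)|=|v|\cosh d(y,\gamma_v)$. So this ``$P_s\nabla_{\mathbb{H}}f$'' is not a Markov average of $\nabla_{\mathbb{H}}f$, and it is not bounded by $P_s|\nabla_{\mathbb{H}}f|$.

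In the Hodge reading, $dP_sf=e^{-\alpha s\Delta_{\mathrm{Hodge}}}df$, with $\Delta_{\mathrm{Hodge}}=dd^*+d^*d=\nabla^*\nabla-(N-1)$ on $1$-forms by Weitzenb\"ock since $\mathrm{Ric}=-(N-1)g$. Domination then gives only $|\nabla_{\mathbb{H}}P_sf|\le e^{(N-1)\alpha s}P_s|\nabla_{\mathbb{H}}f|$.

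No choice of interpretation removes that exponential factor. Suppose commutation and the pointwise bound both held. By Jensen (unit mass of $p_N$), $\phi(s):=P_s(|\nabla_{\mathbb{H}}f|^2)(x_0)-|\nabla_{\mathbb{H}}P_sf|^2(x_0)$ would be nonnegative with $\phi(0)=0$, forcing $\phi'(0)\ge 0$. But Bochner's formula gives
\[
\phi'(0)=2\alpha\bigl(|\mathrm{Hess}\,f|^2+\mathrm{Ric}(\nabla_{\mathbb{H}}f,\nabla_{\mathbb{H}}f)\bigr)(x_0)=-2\alpha(N-1)|\nabla_{\mathbb{H}}f(x_0)|^2<0
\]
for any $f\in C_c^\infty(\mathbb{H}^N)$ that is linear in normal coordinates near $x_0$ with $\nabla_{\mathbb{H}}f(x_0)\neq 0$, so that $\mathrm{Hess}\,f(x_0)=0$. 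This is the Bakry--\'Emery obstruction: $|\nabla P_sf|\le P_s|\nabla f|$ for all $f$ and $s$ is equivalent to $\mathrm{Ric}\ge 0$.

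So the gradient part of iv) cannot be proved in the form stated and used. What you can prove is either the Killing commutation $X(P_sf)=P_s(Xf)$ or the curvature-corrected bound above. With the corrected bound, the same semigroup argument in Section~5 yields the constant $(q-p)\bigl(e^{2(N-1)\alpha}-1\bigr)/(N-1)$ in place of $2\alpha(q-p)$.
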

We define the measure
\[
d\gamma := p_N\big(\rho(y),\alpha\big)\, dV_{\mathbb{H}} .
\]
With respect to this measure, we obtain the following extended Beckner inequality.

\begin{theorem}
Let $q\geq 2$ and $1\leq p\leq q$. Then, for every sufficiently smooth function
$f\colon \mathbb{H}^N \to \mathbb{R}$, the following inequality holds:
\[
\left(\int_{\mathbb{H}^N} |f|^q \, d\gamma\right)^{\!2/q}
-
\left(\int_{\mathbb{H}^N} |f|^p \, d\gamma\right)^{\!2/p}
\leq
2\alpha (q-p)
\left(\int_{\mathbb{H}^N} |\nabla_{\mathbb{H}} f|^q \, d\gamma\right)^{\!2/q}.
\]
\end{theorem}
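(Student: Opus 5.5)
The plan is a semigroup argument, exploiting that $d\gamma$ is the heat kernel at time $1$ centred at the origin. By part ii) of the preceding lemma, $\int_{\mathbb{H}^N} g\,d\gamma = P_1 g(0)$ for every admissible $g$, and $\gamma$ is a probability measure because $\mathbb{H}^N$ is stochastically complete ($P_1 1 = 1$). Set $g:=|f|$, regularizing if necessary by $g_\varepsilon=(f^2+\varepsilon^2)^{1/2}$ and letting $\varepsilon\to 0$ at the end. Define
\[
\psi(r):=\Big(\int_{\mathbb{H}^N} g^r\,d\gamma\Big)^{2/r},\qquad r\in[p,q].
\]
The left-hand side of the theorem is $\psi(q)-\psi(p)=\int_p^q\psi'(r)\,dr$. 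It therefore suffices to prove
\[
\psi'(r)\le 2\alpha\Big(\int_{\mathbb{H}^N}|\nabla_{\mathbb{H}} f|^q\,d\gamma\Big)^{2/q}\qquad\text{for each } r\in[p,q].
\]

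\textbf{Step 1 (differentiation).} A direct computation gives
\[
\psi'(r)=\frac{2}{r^2}\Big(\int g^r d\gamma\Big)^{\frac2r-1}\operatorname{Ent}_\gamma(g^r),
\qquad
\operatorname{Ent}_\gamma(h)=\int h\log h\,d\gamma-\int h\,d\gamma\,\log\!\int h\,d\gamma .
\]

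\textbf{Step 2 (local log-Sobolev via the semigroup).} For $h>0$, write $\operatorname{Ent}_\gamma(h)=P_1(h\log h)(0)-P_1h\log P_1h(0)$. Interpolate through $\Lambda(s)=P_s\big(P_{1-s}h\log P_{1-s}h\big)(0)$ for $s\in[0,1]$. Using parts i), iii) and iv) of the lemma, the standard computation gives
\[
\Lambda'(s)=\alpha P_s\Big(\frac{|\nabla_{\mathbb{H}}P_{1-s}h|^2}{P_{1-s}h}\Big)(0).
\]
Next, use the commutation $\nabla_{\mathbb{H}}P_{1-s}h=P_{1-s}\nabla_{\mathbb{H}}h$ from part iv) together with Cauchy--Schwarz for the positive kernel:
\[
|P_{1-s}\nabla_{\mathbb{H}} h|^2\le P_{1-s}\Big(\frac{|\nabla_{\mathbb{H}} h|^2}{h}\Big)\,P_{1-s}h .
\]
Combined with the semigroup property, the integrand becomes $\le\alpha P_1\big(|\nabla_{\mathbb{H}} h|^2/h\big)(0)$. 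Integrating over $s\in[0,1]$ yields
\[
\operatorname{Ent}_\gamma(h)\le\alpha\int_{\mathbb{H}^N}\frac{|\nabla_{\mathbb{H}} h|^2}{h}\,d\gamma .
\]

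\textbf{Step 3 (Hölder and Jensen).} Take $h=g^r$, so that $|\nabla_{\mathbb{H}} h|^2/h=r^2g^{r-2}|\nabla_{\mathbb{H}} g|^2$ and
\[
\psi'(r)\le 2\alpha\Big(\int g^r d\gamma\Big)^{\frac2r-1}\int g^{r-2}|\nabla_{\mathbb{H}} g|^2\,d\gamma .
\]
For $r\ge2$, Hölder's inequality with exponents $\frac{r}{r-2}$ and $\frac r2$ bounds the last integral by $(\int g^r d\gamma)^{1-\frac2r}(\int|\nabla_{\mathbb{H}} g|^r d\gamma)^{2/r}$. The powers of $\int g^r$ cancel, leaving $\psi'(r)\le2\alpha(\int|\nabla_{\mathbb{H}} f|^r d\gamma)^{2/r}$. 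Since $\gamma$ is a probability measure, Jensen's inequality gives $(\int|\nabla_{\mathbb{H}} f|^r d\gamma)^{2/r}\le(\int|\nabla_{\mathbb{H}} f|^q d\gamma)^{2/q}$ for $r\le q$. Integrating in $r$ from $p$ to $q$ then produces the factor $2\alpha(q-p)$.

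\textbf{Main obstacle: the range $p\le r<2$.} There the Hölder step fails, since $g^{r-2}$ is a negative power and cannot be absorbed into $L^{q/(q-2)}$. My first attempt would be to treat $[p,2)$ separately. One can rewrite $\operatorname{Ent}_\gamma(g^r)=\operatorname{Ent}_\gamma\big((g^{r/2})^2\big)$ and try to bound $\int|\nabla_{\mathbb{H}} g^{r/2}|^2d\gamma$ directly in terms of $\big(\int g^r d\gamma\big)^{1-2/r}$ and $\|\nabla_{\mathbb{H}} f\|_{L^q(\gamma)}$ through a different convexity argument, working with the regularized $g_\varepsilon\ge\varepsilon$ so that all quantities stay finite. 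Alternatively, one could redo Step 2 with the $\Phi$-entropy functional $\Phi(x)=x^{2/r}$ applied to $g^r$ instead of $x\log x$, as in Beckner's original argument, which handles exponents below $2$ via convexity of $1/\Phi''$.

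A secondary point to check is that the commutation in part iv) holds on $\mathbb{H}^N$ without an exponential curvature factor. In general, negative Ricci curvature $-(N-1)$ only gives $|\nabla P_t h|\le e^{\alpha(N-1)t}P_t|\nabla h|$. I will rely on the lemma as stated, but if only the weaker bound is available, the constant $2\alpha$ would need to be adjusted accordingly.
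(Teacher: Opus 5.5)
\textbf{There is a genuine gap: the case $1\le p<2$.} Your Steps 1--3 are fine for $r\ge 2$, so you do get the case $2\le p\le q$, modulo the commutation issue below. That part is a genuinely different route from the paper's: you integrate in the exponent and apply a local log-Sobolev inequality at each $r$. For $p<2$ the problem is worse than a failing H\"older step. Your reduction, ``it suffices to prove $\psi'(r)\le 2\alpha\|\nabla_{\mathbb{H}}f\|_{L^q(\gamma)}^2$ for each $r\in[p,q]$'' with $\psi(r)=\|f\|_{L^r(\gamma)}^2$, is false for $r<3/2$. This already happens in the flat model, where properties i)--iv) hold exactly. Take $\mathbb{R}^N$, $\gamma$ the Gaussian with covariance $2\alpha I$, and $f=1+\epsilon x_1/\sqrt{2\alpha}$, truncated far away at exponentially small cost. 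From $\int|f|^r d\gamma=1+\binom{r}{2}\epsilon^2+3\binom{r}{4}\epsilon^4+O(\epsilon^6)$ one gets $\psi(r)=1+(r-1)\epsilon^2+\tfrac12(r-1)(2-r)\epsilon^4+O(\epsilon^6)$. Hence $\psi'(r)=\epsilon^2+\tfrac12(3-2r)\epsilon^4+O(\epsilon^6)$, which exceeds $2\alpha\|\nabla f\|_{L^q(\gamma)}^2=\epsilon^2$ for $r<3/2$. Integrating over $[p,2]$ still gives Beckner's inequality, so only the pointwise-in-$r$ strategy fails. Your first repair still targets a bound on $\psi'(r)$ for each $r$, so it cannot work. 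The natural integrated substitute is the convexity of $t\mapsto\|f\|_{L^{1/t}(\gamma)}^2$, as in Section 4 of the paper. It only yields $\|f\|_{L^2(\gamma)}^2-\|f\|_{L^p(\gamma)}^2\le\frac{2-p}{p}\operatorname{Ent}_\gamma(f^2)\le\frac{4\alpha(2-p)}{p}\|\nabla_{\mathbb{H}}f\|_{L^2(\gamma)}^2$, which is off by the factor $2/p$.

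\textbf{How the paper avoids this.} It never differentiates in the exponent. It fixes $p,q$ and interpolates in semigroup time with $\phi_s=[P_s((P_{1-s}f^p)^{q/p})]^{2/q}$. The factor $q-p$ comes from the chain rule: $\partial_s\phi_s=\frac{2\alpha(q-p)}{p^2}a_sP_s(g_s^{q/p-2}|\nabla_{\mathbb{H}}g_s|^2)$ with $g_s=P_{1-s}f^p$. H\"older inside $P_{1-s}$ with exponents $\frac{p}{p-1},p$ needs only $p\ge1$. It turns the possibly negative weight into $g_s^{(q-2)/p}$, a nonnegative power since $q\ge2$. H\"older in $P_s$ with exponents $\frac{q}{q-2},\frac q2$ then cancels $a_s$.

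Your second repair is the $\Phi$-entropy with $\Phi(x)=x^{2/p}$ applied to $f^p$; here $1/\Phi''$ is concave exactly for $1\le p<2$. This is precisely the case $q=2$ of the paper's argument. It does give $\|f\|_{L^2(\gamma)}^2-\|f\|_{L^p(\gamma)}^2\le 2\alpha(2-p)\|\nabla_{\mathbb{H}}f\|_{L^2(\gamma)}^2$. Together with your Step 3 on $[2,q]$ and Jensen, it would close the proof, but in the proposal it is only named, not carried out.

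\textbf{On the commutation.} Your secondary worry is justified, and it hits the paper's proof equally, since the paper uses the same property iv). On $\mathbb{H}^N$ the bound $|\nabla_{\mathbb{H}}P_sh|\le P_s|\nabla_{\mathbb{H}}h|$ would force $\mathrm{Ric}\ge0$; only $|\nabla_{\mathbb{H}}P_sh|\le e^{(N-1)\alpha s}P_s|\nabla_{\mathbb{H}}h|$ holds. With it, the same computations give the constant $(q-p)\frac{e^{2(N-1)\alpha}-1}{N-1}$ instead of $2\alpha(q-p)$. The constant $2\alpha$ genuinely fails for small $\alpha$. Take $p=1$, $q=2$ and a positive $f$, constant outside a compact set, with $\operatorname{Hess}f(0)=0\neq\nabla_{\mathbb{H}}f(0)$. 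Then $\operatorname{Var}_\gamma(f)=2\alpha\int|\nabla_{\mathbb{H}}f|^2d\gamma+2(N-1)\alpha^2|\nabla_{\mathbb{H}}f(0)|^2+o(\alpha^2)$.
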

\begin{proof}
    The proof follows \cite{G13}. More clearly, without loss of generality, we assume that $f$ is smooth, positive, and bounded. For $0 \leq s \leq 1$, we consider
    $$\phi_s(x)=[P_s[(P_{1-s}f^p)^{q/p}](x)]^{2/q}.$$
    Then, by the property $ii)$ of $\{P_s\}$, $\phi_0(x)=[(P_1 f^p)(x)]^{2/p}$ and $\phi_1(x)=[P_1(f^q)(x)]^{2/q}$. Hence,
    \begin{equation}\label{extBecknerineq}
        \left(\int_{\mathbb{H}^N}f^q d\gamma\right)^{2/q}-\left(\int_{\mathbb{H}^N}f^p d\gamma\right)^{2/p}=\phi_1(0)-\phi_0(0)=\int_0^1 \partial_s \phi_s(0)ds.
    \end{equation}
    Next, we will compute the derivative of $\phi_s(x)$ with respect to $s$. Indeed, setting $g_s:=P_{1-s}(f^p)$ and $a_s=[P_s(g_s^{q/p})]^{2/q-1}$, we have
    \begin{align*}
        \partial_s \phi_s=\frac{2}{q}a_s \partial_s[P_s(g_s^{q/p})]=\frac{2}{q}a_s (\partial_s P_s)(g_s^{q/p})+\frac{2}{p}a_s P_s\left(g_s^{q/p-1}\partial_s g_s\right).
    \end{align*}
   Since $\partial_s P_s=\alpha \Delta_{\mathbb{H}}P_s=\alpha P_s\Delta_{\mathbb{H}}$,
    \begin{align*}
        \frac{2}{q}a_s (\partial_s P_s)(g_s^{q/p})=\dfrac{2\alpha}{q} a_s \Delta_{\mathbb{H}}P_s(g_s^{q/p})=\dfrac{2\alpha}{q}a_sP_s\Delta_{\mathbb{H}}(g_s^{q/p}).
    \end{align*}
    Moreover, from
    $$\Delta_{\mathbb{H}}(g_s^{q/p})=\text{div}_{\mathbb{H}}\left(\nabla_{\mathbb{H}}(g_s^{q/p})\right)=\text{div}_{\mathbb{H}} \left(\frac{q}{p}g_s^{q/p-1}\nabla_{\mathbb{H}}g_s\right)=\dfrac{q(q-p)}{p^2}g_s^{q/p-2}|\nabla_{\mathbb{H}}g_s|^2+\dfrac{q}{p} g_s^{q/p-1}\Delta_{\mathbb{H}}g_s,$$
    we get
    \begin{align*}
        \frac{2}{q}a_s (\partial_s P_s)(g_s^{q/p})&=\dfrac{2\alpha(q-p)}{p^2}a_s P_s\left(g_s^{q/p-2}|\nabla_{\mathbb{H}}g_s|^2\right)+\dfrac{2\alpha}{p}a_s P_s\left(g_s^{q/p-1}\Delta_{\mathbb{H}}g_s\right)\\
        &=\dfrac{2\alpha(q-p)}{p^2}a_s P_s\left(g_s^{q/p-2}|\nabla_{\mathbb{H}}g_s|^2\right)-\dfrac{2}{p}a_s P_s\left(g_s^{q/p-1}\partial_s g_s\right),
    \end{align*}
    since $\partial_s P_{1-s}=-\alpha\Delta_{\mathbb{H}}P_{1-s}$, or $\partial_s g_s=-\alpha\Delta_{\mathbb{H}}g_s$.
    Hence,
    $$\partial_s \phi_s=\dfrac{2\alpha(q-p)}{p^2}a_s P_s\left(g_s^{q/p-2}|\nabla_{\mathbb{H}}g_s|^2\right).$$
    Furthermore,
    $$\nabla_{\mathbb{H}}g_s=\nabla_{\mathbb{H}}\left(P_{1-s}f^p\right)=P_{1-s}(\nabla_{\mathbb{H}} f^p)=pP_{1-s}(f^{p-1}\nabla_{\mathbb{H}}f),$$
    which implies that
    $$\partial_s\phi_s=2\alpha(q-p)a_sP_s\left(g_s^{q/p-2}\left|P_{1-s}(f^{p-1}\nabla_{\mathbb{H}}f)\right|^2\right).$$
    By Holder's inequality,
    $$\left|P_{1-s}(f^{p-1}\nabla_{\mathbb{H}}f)\right| \leq P_{1-s}\left(f^{p-1}|\nabla_{\mathbb{H}}f|\right)\leq \left(P_{1-s}f^p\right)^{(p-1)/p}\left(P_{1-s}|\nabla f|^p\right)^{1/p}=g_s^{(p-1)/p}\left(P_{1-s}|\nabla f|^p\right)^{1/p}.$$
    Thus,
    $$\partial_s\phi_s \leq 2\alpha(q-p)a_sP_s(g_s^{q/p-2/p}\left(P_{1-s}|\nabla_{\mathbb{H}}f^p\right)^{2/p}).$$
    Next, we assume $q>2$. With the case $q=2$, we can proceed in a similar way. Indeed, using Holder's inequality again, 
    \begin{align*}
        \partial_s\phi_s &\leq 2\alpha(q-p)a_sP_s(g_s^{q/p-2/p}\left(P_{1-s}|\nabla_{\mathbb{H}}f^p\right)^{2/p})\\
        &\leq 2\alpha(q-p)a_s \left(P_s g_s^{q/p}\right)^{1-2/q}\left(P_s (P_{1-s}|\nabla_{\mathbb{H}}f|^p)^{q/p}\right)^{2/q}\\
        &=2\alpha(q-p)a_s a_s^{-1}\left(P_s (P_{1-s}|\nabla_{\mathbb{H}}f|^p)^{q/p}\right)^{2/q}\\
        &=2\alpha(q-p)\left(P_s (P_{1-s}|\nabla_{\mathbb{H}}f|^p)^{q/p}\right)^{2/q}.
    \end{align*}
    Notice that since $1\leq p \leq q$,
    $$P_{1-s}|\nabla_\mathbb{H}f|^p \leq \left(P_{1-s}|\nabla_{\mathbb{H}}f|^q\right)^{p/q}\left(\int_{\mathbb{H}^N}p_N(\rho, \alpha(1-s))dV_{\mathbb{H}}\right)^{1-p/q},$$
    which allows us
    \begin{align*}
        \partial_s\phi_s &\leq 2\alpha(q-p)\left(\int_{\mathbb{H}^N}p_N(\rho, \alpha(1-s))dV_{\mathbb{H}}\right)^{2/p-2/q} \left(P_s(P_{1-s}|\nabla_{\mathbb{H}}f|^q)\right)^{2/q}\\
        &= 2\alpha(q-p)\left(\int_{\mathbb{H}^N}p_N(\rho, \alpha(1-s))dV_{\mathbb{H}}\right)^{2/p-2/q}(P_1|\nabla_{\mathbb{H}}f|^q)^{2/q}.
    \end{align*}
    Then, 
    \begin{align*}
        \partial_s\phi_s(0)&\leq 2\alpha(q-p)\left(\int_{\mathbb{H}^N}p_N(\rho, \alpha(1-s))dV_{\mathbb{H}}\right)^{2/p-2/q}\left(\int_{\mathbb{H}^N}|\nabla_{\mathbb{H}}f|^qd\gamma\right)^{2/q}\\
        &=2\alpha(q-p)\left(\int_{\mathbb{H}^N}|\nabla_{\mathbb{H}}f|^qd\gamma\right)^{2/q}.
    \end{align*}
    From \eqref{extBecknerineq}, 
    $$\left(\int_{\mathbb{H}^N}f^q d\gamma\right)^{2/q}-\left(\int_{\mathbb{H}^N}f^p d\gamma\right)^{2/p}=\int_0^1 \partial_s \phi_s(0)ds\leq 2\alpha(q-p)\left(\int_{\mathbb{H}^N}|\nabla_{\mathbb{H}}f|^qd\gamma\right)^{2/q},$$
    as desired.
\end{proof}
In particular, with $q=2$  and $p=1$, we get the following Poincar\'{e} inequality for the measure $\gamma$, i.e.
$$\int_{\mathbb{H}^N}f^2d\gamma -\left(\int_{\mathbb{H}^N}fd\gamma\right)^2 \leq 2\alpha\int_{\mathbb{H}^N}|\nabla_{\mathbb{H}}f|^2d\gamma.$$
Besides, again with $q=2$ and by letting $p$ approach to $2$, we get the logarithmic Sobolev inequality
\begin{corollary}
    $$\int_{\mathbb{H}^N}f^2\log(f^2)d\gamma \leq \log\left(\int_{\mathbb{H}^N}f^2d\gamma\right)\int_{\mathbb{H}^N}f^2d\gamma +4\alpha \int_{\mathbb{H}^N}|\nabla_{\mathbb{H}}f|^2d\gamma.$$
\end{corollary}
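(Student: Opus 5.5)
The plan is to obtain the corollary as the limit $p\to 2^-$ in the extended Beckner inequality just proved, with $q=2$. For $1\le p<2$ that theorem gives
\[
\int_{\mathbb{H}^N} f^2\,d\gamma-\left(\int_{\mathbb{H}^N}|f|^p\,d\gamma\right)^{2/p}\le 2\alpha(2-p)\int_{\mathbb{H}^N}|\nabla_{\mathbb{H}}f|^2\,d\gamma .
\]
Dividing by $2-p>0$ isolates the gradient term, so it remains to identify the limit of the left-hand side. At $p=2$ the left-hand side vanishes, hence its quotient by $(2-p)$ tends to $\frac{d}{dp}$ of $-\Psi(p)$ at $p=2$, where $\Psi(p):=\left(\int|f|^p d\gamma\right)^{2/p}$.

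First, I would assume $f$ is smooth, bounded and bounded away from zero on the support region relevant for differentiation, or simply smooth with $|f|^p\log|f|$ dominated. This is the same reduction used in the proof of the theorem. The general case then follows by approximation, replacing $f$ by $\sqrt{f^2+\varepsilon}$ and letting $\varepsilon\to0$. Note that $\gamma$ has total mass $\int p_N(\rho,\alpha)\,dV_{\mathbb{H}}=1$ by stochastic completeness of $\mathbb{H}^N$, and that $p_N$ is Gaussian-decaying. Together these justify differentiating under the integral sign via dominated convergence, using $|f|^p|\log|f||\le C(|f|^{p-\delta}+|f|^{2+\delta})$ locally uniformly in $p$.

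Next, I would compute the derivative. Write $I(p)=\int|f|^p\,d\gamma$ and $\Psi(p)=\exp\!\big(\tfrac{2}{p}\log I(p)\big)$. Then
\[
\Psi'(p)=\Psi(p)\Big(-\tfrac{2}{p^2}\log I(p)+\tfrac{2}{p}\tfrac{I'(p)}{I(p)}\Big),\qquad I'(p)=\int|f|^p\log|f|\,d\gamma .
\]
At $p=2$ this gives $\Psi'(2)=-\tfrac12 I(2)\log I(2)+\int f^2\log|f|\,d\gamma$. Equivalently,
\[
\Psi'(2)=\tfrac12\Big[\int f^2\log(f^2)\,d\gamma-\Big(\int f^2d\gamma\Big)\log\Big(\int f^2d\gamma\Big)\Big].
\]
Since $(\Psi(2)-\Psi(p))/(2-p)\to\Psi'(2)$, the limit yields
\[
\tfrac12\,\mathrm{Ent}_\gamma(f^2)\le 2\alpha\int|\nabla_{\mathbb{H}}f|^2\,d\gamma ,
\]
which is exactly the claimed inequality with constant $4\alpha$ after multiplying by $2$.

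The main obstacle is the rigorous justification of the derivative at the endpoint $p=2$ for non-positive or unbounded $f$. I would handle it by first proving the inequality for $f$ bounded with $\inf|f|>0$ on $\mathbb{H}^N$, for instance $f=c+g$ with $g\in C_c^\infty$ and $c>0$; for such $f$, differentiability under the integral is immediate since $\gamma$ is a probability measure. I would then pass to general $f$ by monotone and dominated convergence in both the entropy and gradient terms.
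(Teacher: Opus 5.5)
Your limiting argument is the paper's own: put $q=2$ in the extended Beckner inequality, divide by $2-p$, and identify the limit $p\to2^-$ with $\Psi'(2)=\tfrac12\big[\int f^2\log(f^2)\,d\gamma-\int f^2d\gamma\,\log\int f^2d\gamma\big]$. Your dominated-convergence remarks make it more careful than the paper. The genuine gap is the input you take as given. The bound $\int f^2d\gamma-(\int|f|^pd\gamma)^{2/p}\le 2\alpha(2-p)\int|\nabla_{\mathbb{H}}f|^2d\gamma$ is false on $\mathbb{H}^N$, and so is the corollary with constant $4\alpha$. The Beckner proof uses property iv), $\nabla_{\mathbb{H}}P_sf=P_s\nabla_{\mathbb{H}}f$, which is a flat-space identity. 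Since $\mathrm{Ric}=-(N-1)$ on $\mathbb{H}^N$, one only has the Bakry--\'Emery bound $|\nabla_{\mathbb{H}}P_sf|\le e^{(N-1)\alpha s}P_s|\nabla_{\mathbb{H}}f|$. Moreover, $4\alpha$ is exactly the local log-Sobolev constant that, by the Bakry--Ledoux equivalence, characterizes $\mathrm{Ric}\ge 0$.

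Here is an explicit counterexample. Let $h(x)=\frac{2x_1}{1-|x|^2}=\sinh\rho\,\frac{x_1}{|x|}$, a linear coordinate of the hyperboloid model. The ball-model formulas give $\Delta_{\mathbb{H}}h=Nh$, $|\nabla_{\mathbb{H}}h|^2=1+h^2$ and $h(0)=0$, hence $\Delta_{\mathbb{H}}(h^2)=2(N+1)h^2+2$. Since $\int u\,d\gamma=(e^{\alpha\Delta_{\mathbb{H}}}u)(0)$, solving the resulting linear ODEs in $\alpha$ gives
\[
\int h\,d\gamma=0,\qquad \int h^2\,d\gamma=\frac{e^{2(N+1)\alpha}-1}{N+1},\qquad \int|\nabla_{\mathbb{H}}h|^2\,d\gamma=1+\frac{e^{2(N+1)\alpha}-1}{N+1}.
\]
Hence $\int h^2d\gamma-(\int h\,d\gamma)^2-2\alpha\int|\nabla_{\mathbb{H}}h|^2d\gamma=2(N-1)\alpha^2+O(\alpha^3)>0$ for small $\alpha$. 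This already contradicts the $q=2$, $p=1$ Poincar\'e case. Cut $h$ off radially far from the origin; this keeps the strict inequality and $\int h\,d\gamma=0$. Then $f=1+\varepsilon h$ with small $\varepsilon$ is smooth, positive and bounded. Its entropy term equals $2\varepsilon^2\big(\int h^2d\gamma-(\int h\,d\gamma)^2\big)+O(\varepsilon^3)$, while the gradient term is $4\alpha\varepsilon^2\int|\nabla_{\mathbb{H}}h|^2d\gamma$, so the corollary fails.

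If you redo the semigroup computation with the correct gradient bound, you get $\int f^2d\gamma-(\int|f|^pd\gamma)^{2/p}\le(2-p)\frac{e^{2(N-1)\alpha}-1}{N-1}\int|\nabla_{\mathbb{H}}f|^2d\gamma$. Your limiting argument then goes through verbatim and proves the inequality with $4\alpha$ replaced by $\frac{2(e^{2(N-1)\alpha}-1)}{N-1}$.
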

\begin{proof}
    We define
    $$F(p)=\left(\int_{\mathbb{H}^N}f^p d\gamma\right)^{2/p}.$$ Then,
    $$F'(p)=\left(-\dfrac{2}{p^2}\log\left(\int_{\mathbb{H}^N}f^pd\gamma\right)+\dfrac{2}{p}\dfrac{\int_{\mathbb{H}^N}f^p \log(|f|)d\gamma}{\int_{\mathbb{H}^N}f^pd\gamma}\right)\left(\int_{\mathbb{H}^N}f^pd\gamma\right)^{2/p},$$
    which implies that
    $$F'(2)=-\dfrac{1}{2}\log\left(\int_{\mathbb{H}^N}f^2d\gamma\right)\int_{\mathbb{H}^N}f^2d\gamma+\int_{\mathbb{H}^N}f^2\log(|f|)d\gamma.$$
    Hence, 
    $$2\alpha\int_{\mathbb{H}^N}|\nabla_{\mathbb{H}}f|^2d\gamma\geq F'(2)=-\dfrac{1}{2}\log\left(\int_{\mathbb{H}^N}f^2d\gamma\right)\int_{\mathbb{H}^N}f^2d\gamma+\int_{\mathbb{H}^N}f^2\log(|f|)d\gamma,$$
    as desired.
\end{proof}

\section{logarithmic Sobolev inequalities on Cartan-Hadamard model manifolds}

In this section, we will work on a more general setting, which is a model manifold. Let $\mathbb{M}^N$ be an $N$-dimensional Riemannian manifold ($N \geq 2$), called a 
\emph{model manifold}, whose metric can be written in the form
\begin{equation}\label{metric}
g = \mathrm{d}r \otimes \mathrm{d}r + \psi(r)^2 g_{\mathbb{S}^{N-1}},
\end{equation}
where $r$ denotes the geodesic distance from a fixed point $x_0 \in \mathbb{M}^N$, referred to as the 
\emph{pole} of the manifold. The vector $\mathrm{d}r$ represents the radial direction, and 
$g_{\mathbb{S}^{N-1}}$ is the standard metric on the unit sphere $\mathbb{S}^{N-1}$. 
The smooth function $\psi : [0,\infty) \to [0,\infty)$ completely determines the geometry of $\mathbb{M}^N$.

In this setting, every point $x \in \mathbb{M}^N \setminus \{x_0\}$ can be expressed in polar coordinates 
$(r,\theta) \in (0,\infty) \times \mathbb{S}^{N-1}$, where $r$ measures the distance to the pole $x_0$ 
and $\theta$ specifies the direction of the minimizing geodesic connecting $x$ to $x_0$. 
For general background on model manifolds, we refer to \cite[Section 3.10]{Gre24}. 

\medskip 

The function $\psi$ generating the metric is required to satisfy the following
regularity conditions in order to ensure smoothness of the Riemannian structure:
\begin{equation}\label{psi}
\psi \in C^\infty([0,\infty)), 
\qquad \psi(r) > 0 \ \text{for } r > 0, 
\qquad \psi'(0) = 1, 
\qquad \psi^{(2k)}(0) = 0 \ \ \forall\, k \in \mathbb{N} \cup \{0\}.
\end{equation}
These assumptions are known to be necessary and sufficient for the smoothness of 
the manifold $\mathbb{M}^N$. In many analytical arguments, only the conditions 
$\psi(0) = 0$ and $\psi'(0) = 1$ are essential; however, without the full set of
conditions in \eqref{psi}, the metric may fail to be regular at the pole.

By prescribing the function $\psi$ on the entire half-line $[0,\infty)$, we implicitly 
assume that the associated model manifold is complete and noncompact. This is precisely 
the geometric setting relevant to our analysis. By the Cartan--Hadamard theorem, any such 
manifold is homeomorphic to Euclidean space $\mathbb{R}^N$; moreover, the exponential 
map at any point defines a global diffeomorphism.

Classical examples of noncompact Riemannian model manifolds include Euclidean space 
$\mathbb{R}^N$ and hyperbolic space $\mathbb{H}^N$, corresponding respectively to the 
choices $\psi(r) = r$ and $\psi(r) = \sinh r$.

\medskip

{\bf Throughout this section we further assume that $\psi$ is a \emph{convex} function.} Under this 
assumption, the resulting manifold $\mathbb{M}^N$ becomes a Cartan--Hadamard manifold, 
that is, a complete and simply connected Riemannian manifold with nonpositive sectional 
curvature. For further background we refer the reader to \cite{GW}. Convexity of $\psi$ 
is equivalent to 
\begin{equation}\label{convex-assump}
    \psi''(r) \ge 0 \qquad \text{for all } r>0,
\end{equation}
and in particular implies the bound
\begin{equation}\label{ins-assump}
    \psi'(r) \ge 1 \qquad \text{for all } r>0.
\end{equation}
For any $\delta>0$, let
\[
B_\delta(x_0):=\{x\in \mathbb{M}^N:\ \mathrm{dist}(x,x_0)<\delta\}
\]
denote the geodesic ball in $\mathbb{M}^N$ of radius $\delta$ centered at a fixed pole $x_0$.
If $u\in L^{1}(\mathbb{M}^N)$, then in geodesic polar coordinates $(r,\theta)$ about $x_0$ one has
\[
\int_{\mathbb{M}^N} u(x)\, d\mathrm{vol}_g
=
\int_{\mathbb{S}^{N-1}} \int_{0}^{\infty}
u(r,\theta)\, (\psi(r))^{N-1}\, dr\, d\theta,
\]
where $d\theta$ denotes the standard surface measure on the unit sphere $\mathbb{S}^{N-1}$.

\medskip

\subsection{Schwarz rearrangements and P\'olya-Szeg\"o inequality.} We start by reviewing the concept of rearrangements on a manifold $\mathbb{M}^N$, together with the fundamental measure-theoretic principles that form the basis of symmetrization techniques.
 Let $u: \mathbb{M}^N \rightarrow \mathbb{R}$ be a function such that the volume
$$\text{Vol}_g(\{x \in \mathbb{M}^N:|u(x)|>t\}):=V(\{x \in \mathbb{M}^N:|u(x)|>t\})< \infty\;\text{for all $t>0$}.$$
We define the \textbf{distribution function $\mu_u$} of $u$ as follows:
$$\mu_u(t)=V(\{x \in \mathbb{M}^N:|u(x)|>t\}),$$
which is non-increasing and right-continuous. Then, the \textbf{decreasing rearrangement function $u^*$} of $u$ is defined by
$$u^*(t)=\sup \{s>0: \mu_u(s)>t\},$$
which is non-increasing. Besides, we also define some symmetric decreasing rearrangement functions as follows:
$$u_g^\sharp(x):=u^*(V(B_g(o, \rho(x)))),\; \text{for all $x \in \mathbb{M}^N$},$$
and
$$u_e^\sharp(x):=u^*(\sigma_N |x|^N),\; \text{for all $x \in \mathbb{R}^N$},$$
where $\sigma_N$ denotes the volume of unit ball in $\mathbb{R}^N$.
\begin{lemma}[Comparison of gradient norms of rearrangements]\label{lemC}
Let $u\in W^{1,p}(\mathbb{M}^N)$ and denote by $u_g^\sharp$ and $u_e^\sharp$ the symmetric decreasing rearrangements of $u$ on $\mathbb{M}^N$ and $\mathbb{R}^N$, respectively.  
We also set $v:=u^*$, the decreasing rearrangement of $u$. Then the following identity holds:
\begin{align*}
\int_{\mathbb{M}^N} |\nabla_g u_g^\sharp|_g^p \, dV
=
\int_{\mathbb{R}^N} |\nabla u_e^\sharp|^p \, dx
+
(N\sigma_N)^p
\int_0^\infty |v'(s)|^p \, k_{N,p}\!\left(\frac{s}{\sigma_N}\right) ds,
\end{align*}
where
\[
k_{N,p}(s)
:=
\bigl(\psi(\Phi^{-1}(s))\bigr)^{p(N-1)}
-
s^{\frac{p(N-1)}{N}},
\qquad
\Phi(t):=N\int_0^t (\psi(\tau))^{N-1}\, d\tau .
\]
\end{lemma}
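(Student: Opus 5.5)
Both gradient integrals are one-dimensional integrals in the variable $s$ (volume of the enclosed ball), so the plan is to compute each in that variable and subtract. Since $u_g^\sharp$ is radial, $u_g^\sharp(x)=v(\sigma_N\Phi(r))$ with $r=\rho(x)$ the distance to the pole. Here we use that in polar coordinates $V(B_g(o,r))=\omega_{N-1}\int_0^r\psi^{N-1}d\tau=\sigma_N\Phi(r)$, because $\omega_{N-1}=N\sigma_N$. The metric \eqref{metric} makes $|\nabla_g r|_g=1$, so
\[
|\nabla_g u_g^\sharp|_g=|v'(\sigma_N\Phi(r))|\,\sigma_N\Phi'(r)=N\sigma_N|v'(\sigma_N\Phi(r))|\,\psi(r)^{N-1}.
\]
We first assume $v$ is locally absolutely continuous. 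This holds for $u\in W^{1,p}$ with $u_g^\sharp\in W^{1,p}$ by the P\'olya--Szeg\"o principle on Cartan--Hadamard manifolds. If needed, we first argue for smooth compactly supported $u$ and then use density.

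Next we integrate in polar coordinates:
\[
\int_{\mathbb{M}^N}|\nabla_g u_g^\sharp|_g^p\,dV=\omega_{N-1}\int_0^\infty (N\sigma_N)^p|v'(\sigma_N\Phi(r))|^p\psi(r)^{p(N-1)}\psi(r)^{N-1}dr .
\]
Substitute $s=\sigma_N\Phi(r)$, so $ds=N\sigma_N\psi(r)^{N-1}dr=\omega_{N-1}\psi^{N-1}dr$ and $r=\Phi^{-1}(s/\sigma_N)$. This gives
\[
(N\sigma_N)^p\int_0^\infty|v'(s)|^p\,\psi\bigl(\Phi^{-1}(s/\sigma_N)\bigr)^{p(N-1)}ds.
\]
Exactly the same computation in $\mathbb{R}^N$, i.e.\ with $\psi(r)=r$ and $\Phi(r)=r^N$, gives
\[
\int_{\mathbb{R}^N}|\nabla u_e^\sharp|^p\,dx=(N\sigma_N)^p\int_0^\infty|v'(s)|^p\,(s/\sigma_N)^{p(N-1)/N}ds.
\]
Subtracting the two formulas yields the identity, with kernel $k_{N,p}(s/\sigma_N)$.

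The main obstacle is regularity and finiteness, not the algebra. We must justify the chain rule for $v\circ(\sigma_N\Phi)$ and ensure both integrals make sense. The difference kernel $k_{N,p}$ is nonnegative because $\psi(r)\ge r$, by convexity and \eqref{ins-assump}; hence $\psi(r)^{N-1}\ge \Phi(r)^{(N-1)/N}$ via the isoperimetric-type comparison $\Phi(r)\le\psi(r)^N$. This sign, however, is not needed for the identity itself. Finiteness of the Euclidean term follows from the Euclidean P\'olya--Szeg\"o inequality applied through the same $v$, or we can simply read the identity in $[0,\infty]$.

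For the chain rule, I would handle the set where $v'$ fails to exist by noting that $v$ is monotone. On flat zones of $v$ the gradient vanishes on both sides, and the coarea formula shows that these contribute equally (zero) to both sides.
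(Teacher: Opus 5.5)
Your proposal is correct and follows the same route as the paper. You write $u_g^\sharp=v(\sigma_N\Phi(\rho))$, integrate in polar coordinates using $|\nabla_g\rho|_g=1$, substitute $s=\sigma_N\Phi(r)$, compare with the specialization $\psi(r)=r$, and subtract.

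One caveat concerns your regularity remark, which the paper does not address at all. Appealing to a P\'olya--Szeg\"o principle for $u_g^\sharp$ is not justified on general model manifolds; the paper itself notes, citing Muratori--Volzone, that it can fail. You do not need it, however. Local absolute continuity of $v$ follows from any isoperimetric lower bound with positive profile. Since both kernels are nonnegative, the identity then holds in $[0,\infty]$ without any density argument.
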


\begin{proof}
We first recall that the Euclidean rearrangement satisfies
\begin{equation}\label{eq:Eucl-grad}
\int_{\mathbb{R}^N} |\nabla u_e^\sharp|^p \, dx
=
(N\sigma_N)^p
\int_0^\infty |v'(s)|^p
\left(\frac{s}{\sigma_N}\right)^{\frac{(N-1)p}{N}} ds .
\end{equation}

On the manifold $\mathbb{M}^N$, the volume of a geodesic ball of radius $\rho(x)$ is given by
\[
V(B_g(o,\rho(x)))
=
\int_{\mathbb{S}^{N-1}}\int_0^{\rho(x)} (\psi(t))^{N-1}\, dt\, d\sigma
=
\sigma_N \Phi(\rho(x)),
\]
where $\Phi(t)=N\int_0^t (\psi(t))^{N-1}dt$. Consequently,
\[
\nabla_g V(B_g(o,\rho(x)))
=
N\sigma_N (\psi(\rho(x)))^{N-1}\nabla_g \rho(x).
\]

Since $u_g^\sharp(x)=v(V(B_g(o,\rho(x))))$, we compute
\begin{align*}
\int_{\mathbb{M}^N} |\nabla_g u_g^\sharp|_g^p \, dV
&=
\int_{\mathbb{M}^N}
\bigl| v'(V(B_g(o,\rho(x)))) \bigr|^p
\bigl| \nabla_g V(B_g(o,\rho(x))) \bigr|_g^p \, dV \\
&=
N\sigma_N
\int_0^\infty
|v'(V(B_g(o,t)))|^p
\bigl(N\sigma_N \psi^{N-1}(t)\bigr)^p
\psi^{N-1}(t)\, dt .
\end{align*}

Performing the change of variables
\[
s = V(B_g(o,t)) = \sigma_N \Phi(t),
\qquad
ds = N\sigma_N \psi^{N-1}(t)\, dt,
\]
we obtain
\[
\int_{\mathbb{M}^N} |\nabla_g u_g^\sharp|_g^p \, dV
=
(N\sigma_N)^p
\int_0^\infty
|v'(s)|^p
\bigl(\psi(\Phi^{-1}(s/\sigma_N))\bigr)^{p(N-1)} ds .
\]

Comparing this with \eqref{eq:Eucl-grad} yields the claimed decomposition, completing the proof.
\end{proof}

We begin by examining the function
\[
k_{N,p}(s)
:=
\bigl(\psi(\Phi^{-1}(s))\bigr)^{p(N-1)}
-
s^{\frac{p(N-1)}{N}},
\qquad s\ge 0 .
\]
To understand the behavior of $k_{N,p}$, it is natural to study the asymptotics of the following quotient near the origin and at infinity. In particular, we are led to consider the following quantity 
\[
\frac{\psi^{p(N-1)}(t)-\Phi^{\frac{p(N-1)}{N}}(t)}{\Phi^{p}(t)} .
\]

\begin{lemma}[Asymptotic behavior]\label{lem:CNP-asymptotics}
Let $N\ge 2$ and $p\ge \frac{N}{N-1}$. Assume that $\psi^{\prime \prime \prime}(0)>0$
and that the limit
\[
\lim_{t\to\infty}\frac{\psi'(t)}{\psi(t)}=:C_1
\]
exists. Then the following hold.
\begin{enumerate}
\item[\emph{(i)}] (\emph{Behavior near the origin})
\[
\lim_{t\to 0}
\frac{\psi^{p(N-1)}(t)-\Phi^{\frac{p(N-1)}{N}}(t)}{\Phi^{p}(t)}
=
\begin{cases}
0, & \text{if } p<2,\\[4pt]
\dfrac{6(N-1)}{N+2}\,a_3, & \text{if } p=2,\\[6pt]
+\infty, & \text{if } p>2.
\end{cases}
\]
Here $a_3= \psi^{\prime \prime \prime}(0).$
\item[\emph{(ii)}] (\emph{Behavior at infinity})
\[
\lim_{t\to\infty}
\frac{\psi^{p(N-1)}(t)-\Phi^{\frac{p(N-1)}{N}}(t)}{\Phi^{p}(t)}
=
\left(\frac{N-1}{N}\right)^p C_1^p .
\]
\end{enumerate}
\end{lemma}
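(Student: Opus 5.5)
The plan is to treat the two limits separately, using only the one-variable Taylor structure of $\psi$ at $0$ from \eqref{psi} and a L'Hospital argument at infinity, in the same spirit as Lemma \ref{L2I} and Lemma \ref{LpI}.

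\emph{Near the origin.} By \eqref{psi} we have $\psi(0)=\psi''(0)=0$ and $\psi'(0)=1$, so
\[
\psi(t)=t+\tfrac{\psi'''(0)}{6}t^3+o(t^3).
\]
I would write $\psi(t)=t\,(1+b t^2+o(t^2))$ with $b:=\psi'''(0)/6$, and then carry out the following steps.
\begin{enumerate}
\item Expand $\psi^{N-1}(t)=t^{N-1}\bigl(1+(N-1)bt^2+o(t^2)\bigr)$ and integrate to get
\[
\Phi(t)=t^N\Bigl(1+\tfrac{N(N-1)}{N+2}\,b\,t^2+o(t^2)\Bigr),
\]
exactly as in the proof of Lemma \ref{L2I}.
\item Raise to the relevant powers:
\[
\psi^{p(N-1)}=t^{p(N-1)}\bigl(1+p(N-1)bt^2+o(t^2)\bigr),\qquad
\Phi^{\frac{p(N-1)}{N}}=t^{p(N-1)}\Bigl(1+\tfrac{p(N-1)^2}{N+2}bt^2+o(t^2)\Bigr).
\]
\item Subtract. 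The leading terms cancel, and the difference is
\[
\frac{3p(N-1)}{N+2}\,b\,t^{p(N-1)+2}+o\bigl(t^{p(N-1)+2}\bigr).
\]
\item Divide by $\Phi^p=t^{Np}(1+o(1))$. The quotient equals
\[
\frac{3p(N-1)}{N+2}\,b\,t^{2-p}+o(t^{2-p}).
\]
\end{enumerate}
This tends to $0$ if $p<2$. It tends to $+\infty$ if $p>2$, because $b>0$ by the hypothesis $\psi'''(0)>0$. For $p=2$ it tends to $\frac{6(N-1)}{N+2}\,b$.

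Note that the constant appearing in (i) is $\frac{6(N-1)}{N+2}$ times the \emph{Taylor coefficient} of $t^3$, i.e.\ $(N-1)\psi'''(0)/(N+2)$. This is consistent with Lemma \ref{L2I} for $\psi=\sinh$, where the limit is $\frac{N-1}{N+2}$. I would state this normalization of $a_3$ explicitly. The only care needed is that the $o(\cdot)$ remainders survive raising to the non-integer powers $p(N-1)$ and $p(N-1)/N$; this follows from $(1+x)^\beta=1+\beta x+O(x^2)$.

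\emph{At infinity.} By \eqref{ins-assump}, $\psi(t)\ge t$, hence $\Phi(t)\to\infty$. I would split the quotient as
\[
\Bigl(\frac{\psi^{N-1}}{\Phi}\Bigr)^{p}-\Phi^{\frac{p(N-1)}{N}-p}.
\]
The second term equals $\Phi^{-p/N}$, which tends to $0$. For the first, $\Phi\to\infty$, so L'Hospital's rule applies in the $\cdot/\infty$ form and gives
\[
\lim_{t\to\infty}\frac{\psi^{N-1}}{\Phi}=\lim_{t\to\infty}\frac{(N-1)\psi^{N-2}\psi'}{N\psi^{N-1}}=\frac{N-1}{N}C_1,
\]
using the assumed existence of $\lim_{t\to\infty}\psi'/\psi$. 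Raising to the power $p$ yields (ii).

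\emph{Main obstacle.} The main obstacle is bookkeeping in the second-order expansion at $0$: one must verify that the $t^2$-coefficients of $\psi^{p(N-1)}$ and $\Phi^{p(N-1)/N}$ really differ by the factor $1-\frac{N-1}{N+2}=\frac{3}{N+2}$. The rest is routine.
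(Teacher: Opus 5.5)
Your argument is correct. Part (ii) is essentially the paper's argument: discard $\Phi^{-p/N}\to 0$, then apply L'Hospital to $\psi^{N-1}/\Phi$. You are slightly more careful than the paper here, since you justify $\Phi\to\infty$ through the standing convexity assumption and \eqref{ins-assump}, which the paper uses tacitly.

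For part (i) your route is different. The paper applies L'Hospital's rule twice, reducing the quotient to
\[
\frac{(N-1)((p-1)N-p)}{(p-1)N^2}\left[\frac{\psi^{(p-2)N-p}(\psi')^2}{\Phi^{p-2}}+\frac{1}{(p-1)N-p}\,\frac{\psi^{(p-2)N+1-p}\psi''}{\Phi^{p-2}}-\Phi^{-p/N}\right],
\]
and only then Taylor-expands. You instead expand $\psi^{p(N-1)}$, $\Phi^{p(N-1)/N}$ and $\Phi^{p}$ directly, which is the computation the paper performs later in Remark \ref{rem:failure-and-sufficient}.

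Your version has several advantages:
\begin{itemize}
\item it is shorter and needs no checking of L'Hospital hypotheses;
\item it does not use $p\ge\frac{N}{N-1}$;
\item it avoids the degenerate factors $\frac{1}{N-2}$ (at $N=p=2$) and $\frac{1}{(p-1)N-p}$ (at $p=\frac{N}{N-1}$) that appear in the paper's formulas;
\item it yields the genuine leading term $\frac{3p(N-1)}{N+2}\,b\,t^{2-p}$ of the quotient itself, not of its twice-differentiated version.
\end{itemize}
What the paper's route buys is that the differentiated expressions are exactly the functions $G$ and $K$ reused in the monotonicity argument of Lemma \ref{lem:attainment-zero}.

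Your normalization remark is also correct and should be kept. The paper's proof writes $\psi(t)=t+a_3t^3+o(t^3)$, so its $a_3$ is really $\psi'''(0)/6$. If one reads $a_3=\psi'''(0)$ literally, the $p=2$ value in the statement is six times too large; for $\psi=\sinh$ it would contradict the limit $\frac{N-1}{N+2}$ computed in Lemma \ref{L2I}.
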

\medskip

\begin{proof}
{\bf Case $p=2$:}
First, we compute
$$\lim_{t \rightarrow 0} \dfrac{\psi^{2(N-1)}(t)-\Phi^{\frac{2(N-1)}{N}}(t)}{\Phi^2(t)}.$$  
Applying L'Hospital rule twice, it is straightforward to get
$$\lim_{t \rightarrow 0} \dfrac{\psi^{2(N-1)}(t)-\Phi^{\frac{2(N-1)}{N}}(t)}{\Phi^2(t)}=\dfrac{(N-1)(N-2)}{N^2}\lim_{t\rightarrow 0} \left[\left(\dfrac{\psi'(t)}{\psi(t)}\right)^2+\dfrac{1}{N-2}\dfrac{\psi''(t)}{\psi(t)}-\Phi^{-2/N}(t)\right],$$
In order to evaluate the limit
$$\lim_{t\rightarrow 0} \left[\left(\dfrac{\psi'(t)}{\psi(t)}\right)^2+\dfrac{1}{N-2}\dfrac{\psi''(t)}{\psi(t)}-\Phi^{-2/N}(t)\right],$$
 we can represent $\psi$ using Taylor's series as
$$\psi(t)=t+a_3t^3+o(t^3)\;\text{as $t \rightarrow 0$},$$
where $a_3>0$. By computing directly, we have
\begin{align*}
    \dfrac{\psi'(t)}{\psi(t)}&=\dfrac{1}{t}(1+3a_3t^2+o(t^2))(1+a_3t^2+o(t^2))^{-1}
    =\dfrac{1}{t}(1+3a_3t^2+o(t^2))(1-a_3t^2+o(t^2))\\
    &=\dfrac{1}{t}(1+2a_3t^2+o(t^2)),
\end{align*}
which allows us to write 
$$\left(\dfrac{\psi'(t)}{\psi(t)}\right)^2=\dfrac{1}{t^2}+4a_3+o(1).$$
Furthermore,
\begin{align*}
    \dfrac{\psi''(t)}{\psi(t)}&=\dfrac{1}{t}\left(6a_3t+o(t)\right)(1+a_3t^2+o(t^2))^{-1}
    =\dfrac{1}{t}\left(6a_3t+o(t)\right)(1-a_3t^2+o(t^2))\\
    &=\dfrac{1}{t}(6a_3t+o(t))=6a_3+o(1),
\end{align*}
and
\begin{align*}
    \Phi(t)=N\int_0^t \psi^{N-1}(s)ds&=N\int_0^t (s+a_3s^3+o(s^3))^{N-1}ds
    =N \int_0^t s^{N-1}(1+a_3s^2+o(s^2))^{N-1}ds\\
    &=N\int_0^t s^{N-1}(1+(N-1)a_3s^2+o(s^2))ds\\
    &=t^N+\dfrac{N(N-1)a_3}{N+2}t^{N+2}+o(t^{N+2}),
\end{align*}
which implies that
$$(\Phi(t))^{-2/N}=\left(t^N+\dfrac{N(N-1)a_3}{N+2}t^{N+2}+o(t^{N+2})\right)^{-2/N}=\dfrac{1}{t^2}\left(1-\dfrac{2(N-1)}{N+2}a_3t^2+o(t^2)\right).$$
Hence, 
\begin{align*}
    &\left(\dfrac{\psi'(t)}{\psi(t)}\right)^2+\dfrac{1}{N-2}\dfrac{\psi''(t)}{\psi(t)}-\Phi^{-2/N}(t)\\
    &=\dfrac{1}{t^2}+4a_3+o(1)+\dfrac{1}{N-2}(6a_3+o(1))-\dfrac{1}{t^2}\left(1-\dfrac{2(N-1)}{N+2}a_3t^2+o(t^2)\right),
\end{align*}
which tends to
$\left(4+\dfrac{6}{N-2}+\dfrac{2(N-1)}{N+2}\right)a_3=\dfrac{6N^2}{N^2-4}a_3$ as $t \rightarrow 0$.
Thus,
$$\lim_{t \rightarrow 0} \dfrac{\psi^{2(N-1)}(t)-\Phi^{\frac{2(N-1)}{N}}(t)}{\Phi^2(t)}=\dfrac{(N-1)(N-2)}{N^2}\dfrac{6N^2}{N^2-4}a_3=\dfrac{6(N-1)}{N+2}a_3.$$

\medskip 

Now we compute the limit at infinity. We have
\begin{align*}
\lim_{t\to\infty}
\frac{\psi^{2(N-1)}(t)-\Phi^{\frac{2(N-1)}{N}}(t)}{\Phi^{2}(t)}
&=
\lim_{t\to\infty}
\left(\frac{\psi^{N-1}(t)}{\Phi(t)}\right)^{2} \\
&=
\left(
\lim_{t\to\infty}
\frac{\psi^{N-1}(t)}{\Phi(t)}
\right)^{2}
=
\left(\frac{N-1}{N}\right)^{2}
\left(
\lim_{t\to\infty}
\frac{\psi'(t)}{\psi(t)}
\right)^{2} \\
&=
\left(\frac{N-1}{N}\right)^{2} C_1^{2}.
\end{align*}
\medskip 

{\bf For general $p \geq \frac{N}{N-1}$:} We compute
$$\lim_{t \rightarrow 0} \dfrac{\psi^{p(N-1)}(t)-\Phi^{\frac{p(N-1)}{N}}(t)}{\Phi^p(t)}.$$
Applying L'Hospital rule twice, it is straightforward to get
\begin{align*}
    &\lim_{t \rightarrow 0} \dfrac{\psi^{p(N-1)}(t)-\Phi^{\frac{p(N-1)}{N}}(t)}{\Phi^p(t)}\\
    &=\dfrac{(N-1)((p-1)N-p)}{(p-1)N^2}\\
    &\times\lim_{t\rightarrow 0} \left[\dfrac{(\psi(t))^{(p-2)N-p}(\psi'(t))^2}{(\Phi(t))^{p-2}}+\dfrac{1}{(p-1)N-p}\dfrac{(\psi(t))^{(p-2)N+1-p}\psi''(t)}{(\Phi(t))^{p-2}}-\Phi^{-p/N}(t)\right].
\end{align*}
As above, we assume that $\psi(t)=t+a_3t^3+o(t^3)$ as $t \rightarrow 0$, we have
$$\psi(t)^{(p-2)N-p}=t^{(p-2)N-p}\left(1+((p-2)N-p)a_3t^2+o(t^2)\right),$$
$$\psi(t)^{(p-2)N-p+1}=t^{(p-2)N-p+1}\left(1+((p-2)N-p+1)a_3t^2+o(t^2)\right),$$
\and
$$(\psi'(t))^2=1+6a_3t^2+o(t^2).$$
Since $\Phi(t)=t^N+\dfrac{N(N-1)}{N+2}a_3 t^{N+2}+o(t^{N+2})$, we have
$$\Phi(t)^{2-p}=t^{N(2-p)}\left(1+(2-p)\dfrac{N(N-1)}{N+2}a_3t^2+o(t^2)\right),$$
and
$$\Phi(t)^{-p/N}=t^{-p}\left(1-\dfrac{p(N-1)}{N+2}a_3t^2+o(t^2)\right).$$
Then,
\begin{align*}
    &\dfrac{(\psi(t))^{(p-2)N-p}(\psi'(t))^2}{(\Phi(t))^{p-2}}+\dfrac{1}{(p-1)N-p}\dfrac{(\psi(t))^{(p-2)N+1-p}\psi''(t)}{(\Phi(t))^{p-2}}-\Phi^{-p/N}(t)\\
    &=\left((p-2)\dfrac{3N}{N+2}+6-p+\dfrac{6}{(p-1)N-p}+\dfrac{p(N-1)}{N+2}\right)a_3t^{2-p}+o(t^{2-p}),
\end{align*}
which tends to $0$ if $p<2$ and to $\infty$ as $p>2$.

Moreover, 
\begin{align*}
    \lim_{t \rightarrow \infty} \dfrac{\psi^{p(N-1)}(t)-\Phi^{\frac{p(N-1)}{N}}(t)}{\Phi^p(t)}=\lim_{t \rightarrow \infty} \left(\dfrac{\psi^{N-1}(t)}{\Phi(t)}\right)^p= \left(\lim_{t \rightarrow \infty}\dfrac{\psi^{N-1}(t)}{\Phi(t)}\right)^p&=\left(\dfrac{N-1}{N}\right)^p \left(\lim_{t\rightarrow \infty} \dfrac{\psi'(t)}{\psi(t)} \right)^p\\
    &=\left(\dfrac{N-1}{N}\right)^pC_1^p.
\end{align*}
\end{proof}


\medskip
As a direct consequence, we have the following corollary.
\begin{corollary}[Lower bound for the correction term]\label{lemlower}
Let $N\geq 2$ and $p\geq \frac{N}{N-1}$. 
 Assume that
\begin{equation}\label{cdt 8.4}
    C_1= \lim_{t\rightarrow \infty} \dfrac{\psi'(t)}{\psi(t)}>0,
\end{equation}
and
\begin{equation}\label{cdt 8.5}
    k_{N,p}(\Phi(t))=(\psi(t))^{p(N-1)}-(\Phi(t))^{\frac{p(N-1)}{N}} >0\;\text{for all $t>0$}.
\end{equation}
Then there exists a constant $C(N,p)>0$ such that
\[
k_{N,p}(s)\ge C(N,p)\, s^{p}
\qquad \text{for all } s\ge 0 ,
\]
which is equivalent to
\[
\psi^{p(N-1)}(t)-\Phi^{\frac{p(N-1)}{N}}(t)
\;\ge\;
C(N,p)\,\Phi^{p}(t)
\qquad \text{for all } t\ge 0 ,
\]
where $\Phi(t)=N\int_0^t (\psi(\tau))^{N-1}\, d\tau$.
Moreover, the optimal constant is given by
\[
C(N,p)
=
\inf_{t\ge 0}
\frac{\psi^{p(N-1)}(t)-\Phi^{\frac{p(N-1)}{N}}(t)}{\Phi^{p}(t)} .
\]
\end{corollary}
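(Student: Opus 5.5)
Since $\Phi$ is an increasing diffeomorphism of $[0,\infty)$ onto itself (because $\psi>0$ on $(0,\infty)$ and $\psi'\ge 1$ gives $\Phi(t)\to\infty$), the two formulations are equivalent. So I will work with the quotient
\[
Q(t):=\frac{\psi^{p(N-1)}(t)-\Phi^{\frac{p(N-1)}{N}}(t)}{\Phi^{p}(t)},\qquad t>0,
\]
and show that $\inf_{t>0}Q(t)>0$. Once this holds, the optimal constant is by definition this infimum. The value $s=0$ (that is, $t=0$) is trivial, since both sides vanish there.

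\textbf{Step 1: interior positivity.} By hypothesis \eqref{cdt 8.5}, the numerator is strictly positive for every $t>0$. Since $\psi$ and $\Phi$ are smooth and $\Phi>0$ on $(0,\infty)$, $Q$ is continuous and strictly positive on $(0,\infty)$. Hence on any compact interval $[\varepsilon,T]\subset(0,\infty)$, $Q$ attains a positive minimum.

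\textbf{Step 2: behaviour at infinity.} By Lemma \ref{lem:CNP-asymptotics}(ii), $Q(t)\to\left(\frac{N-1}{N}\right)^pC_1^p$, which is positive by \eqref{cdt 8.4}. So there is $T$ with $Q(t)\ge \tfrac12\left(\frac{N-1}{N}\right)^pC_1^p$ for $t\ge T$. For this limit I only need the L'Hospital computation $\psi^{N-1}/\Phi\to \frac{N-1}{N}\lim\psi'/\psi$, together with $\Phi^{\frac{p(N-1)}{N}}/\Phi^p=\Phi^{-p/N}\to0$; both are available as in the lemma.

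\textbf{Step 3: behaviour near the origin, the main obstacle.} If $p\ge 2$ and $a_3=\psi'''(0)>0$, Lemma \ref{lem:CNP-asymptotics}(i) gives $\liminf_{t\to0}Q(t)\in\bigl(0,+\infty\bigr]$, equal to $\frac{6(N-1)}{N+2}a_3$ or $+\infty$. This yields $\varepsilon>0$ with $Q\ge c>0$ on $(0,\varepsilon]$. Combining Steps 1–3 then gives $\inf_{t>0}Q(t)>0$.

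The difficulty is that the corollary as stated only assumes $p\ge\frac{N}{N-1}$, \eqref{cdt 8.4} and \eqref{cdt 8.5}. In the range $p<2$, Lemma \ref{lem:CNP-asymptotics}(i) gives $Q(t)\to0$ as $t\to0$, so the infimum would be $0$. I would therefore read the corollary in the setting of the preceding lemma, namely with $a_3>0$, and in effect with $p\ge 2$, so that the limit at $0$ is positive. I would state this restriction explicitly in the proof, noting that for $\frac{N}{N-1}\le p<2$ the argument, and indeed the claimed bound, fails near the origin.

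If one wants to avoid assuming $a_3>0$ for $p>2$, I would first try a direct expansion. Since $\psi(t)\ge t$ by convexity, the numerator behaves like $c\,t^{p(N-1)+2}$ while $\Phi^p\sim t^{Np}$, so $Q\sim c\,t^{2-p}$. This quotient blows up for $p>2$ as soon as the leading coefficient is positive, and \eqref{cdt 8.5} should ensure that coefficient is nonnegative. The degenerate case where it vanishes would need higher-order terms, so I would keep the lemma's hypothesis $a_3>0$ rather than handle it.
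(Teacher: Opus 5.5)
Your proof is the same compactness argument the paper intends. The paper gives no separate proof: it presents the corollary as a direct consequence of Lemma~\ref{lem:CNP-asymptotics}, just as in the proof of Lemma~\ref{LpI}. Like you, it uses interior positivity from \eqref{cdt 8.5}, the limit $\bigl(\frac{N-1}{N}\bigr)^pC_1^p>0$ at infinity from \eqref{cdt 8.4}, and a positive or infinite limit at $0$ from part (i).

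Your caveat is correct and worth keeping. Part (i) itself gives limit $0$ for $p<2$, so the corollary genuinely requires $p\ge 2$ and $a_3>0$, which is the only way the paper later uses it. Moreover, $a_3>0$ is necessary, not merely convenient: if $a_3=0$, then $\psi(t)=t+O(t^5)$ and the quotient is $O(t^{4-p})$. The bound therefore also fails for $2\le p<4$, for example with $\psi(t)=\sinh t-t^3/6$.
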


\begin{remark}\label{rem:failure-and-sufficient}
{\rm
In general, condition~\eqref{cdt 8.5} does \emph{not} hold without additional assumptions on the function $\psi$.
Indeed, suppose that
\[
\psi(t)=t+a_3 t^3+o(t^3)
\qquad \text{as } t\to 0.
\]
Here $a_3 = \psi^{\prime \prime \prime}(0).$
A direct expansion yields
\begin{align*}
(\psi(t))^{p(N-1)}
&=
t^{p(N-1)}\bigl(1+a_3 t^2+o(t^2)\bigr)^{p(N-1)} \\
&=
t^{p(N-1)}
+
p(N-1)a_3\, t^{p(N-1)+2}
+
o\!\left(t^{p(N-1)+2}\right),
\end{align*}
while
\begin{align*}
(\Phi(t))^{\frac{p(N-1)}{N}}
&=
t^{p(N-1)}
\left(1+\frac{N(N-1)}{N+2}a_3 t^2+o(t^2)\right)^{\frac{p(N-1)}{N}} \\
&=
t^{p(N-1)}
+
\frac{p(N-1)^2}{N+2}a_3\,
t^{p(N-1)+2}
+
o\!\left(t^{p(N-1)+2}\right).
\end{align*}
Consequently,
\[
k_{N,p}(\Phi(t))
=
\frac{3p(N-1)}{N+2}\, a_3\,
t^{p(N-1)+2}
+
o\!\left(t^{p(N-1)+2}\right),
\]
which shows that $k_{N,p}(\Phi(t))$ need not be nonnegative for all $t>0$.
In particular, choosing for instance $\psi(t)=t+t^3-t^5$ provides an explicit counterexample.
}
\end{remark}

\begin{remark}
{\rm
On the other hand, condition~\eqref{cdt 8.5} is satisfied under suitable monotonicity assumptions on $\psi$.
If $\psi'(t)>1$ for all $t>0$, then
\[
\Phi(t)
=
N\int_0^t (\psi(\tau))^{N-1}\, d\tau
<
N\int_0^t (\psi(\tau))^{N-1}\psi'(\tau)\, d\tau
=
(\psi(t))^{N},
\]
which directly implies~\eqref{cdt 8.5}.
A sufficient condition ensuring $\psi'(t)>1$ in a neighborhood of the origin is, for instance, $\psi''(t)>0$ near $t=0$, since then $\psi'(t)>\psi'(0)=1$.
}
\end{remark}
\medskip
\begin{lemma}[Sufficient condition for attainment at \(t=0\)]\label{lem:attainment-zero}
Let \(N\ge 3\) and $\psi$ satisfies \eqref{psi}, and \eqref{cdt 8.4}.
Define
\[
K(t):=\left(\frac{\psi'(t)}{\psi(t)}\right)^{2}
+\frac{1}{N-2}\frac{\psi''(t)}{\psi(t)}
-\frac{6N^{2}}{N^{2}-4}a_{3}.
\]
If
\begin{equation}\label{eq cdt 3}
\psi^{N-1}(t)+\frac{1}{2}\frac{K'(t)}{K^{\frac{N+2}{2}}(t)}\ge 0
\qquad \text{for all } t\ge 0,
\end{equation}
then the infimum in the definition of \(C(N,2)\) is attained at \(t=0\).
\end{lemma}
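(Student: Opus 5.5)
The plan is to copy the monotonicity argument of Lemma~\ref{L2I}, with $\sinh$ replaced by $\psi$ and the constant $\frac{N-1}{N+2}$ replaced by the limit at the origin from Lemma~\ref{lem:CNP-asymptotics}(i), namely $C_0:=\frac{6(N-1)}{N+2}a_3$. Since $\Phi$ is a diffeomorphism of $[0,\infty)$, it suffices to prove
\[
F(t):=\psi^{2(N-1)}(t)-\Phi^{\frac{2(N-1)}{N}}(t)-C_0\,\Phi^2(t)\ \ge 0\qquad\text{for all } t\ge 0 .
\]
Then the quotient in Corollary~\ref{lemlower} is bounded below by its limit at $t=0$. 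This shows the infimum $C(N,2)$ is attained at $t=0$, understood as the limit there.

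\textbf{Step 1: first derivative.} Using $\Phi'=N\psi^{N-1}$, I get
\[
F'(t)=2(N-1)\psi^{N-1}G(t),\qquad
G:=\psi^{N-2}\psi'-\Phi^{\frac{N-2}{N}}-\tfrac{6N a_3}{N+2}\,\Phi .
\]
Here I used $\frac{N C_0}{N-1}=\frac{6Na_3}{N+2}$. Moreover $G(0)=0$, because $N\ge 3$ gives $\psi^{N-2}(0)=0$ and $\Phi(0)=0$.

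\textbf{Step 2: second derivative.} Differentiating once more and factoring out $(N-2)\psi^{N-1}$ gives
\[
G'(t)=(N-2)\psi^{N-1}\Big[\Big(\tfrac{\psi'}{\psi}\Big)^2+\tfrac{1}{N-2}\tfrac{\psi''}{\psi}-\tfrac{6N^2a_3}{(N+2)(N-2)}-\Phi^{-2/N}\Big]
=(N-2)\psi^{N-1}\big[K(t)-\Phi^{-2/N}(t)\big].
\]
This explains the choice of $K$ in the statement. So $G'\ge0$ reduces to $\Phi\ge K^{-N/2}$, provided $K>0$.

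\textbf{Step 3: the key comparison.} Set $H:=\Phi-K^{-N/2}$. Then
\[
H'=N\psi^{N-1}+\tfrac N2 K'K^{-\frac{N+2}{2}}=N\Big[\psi^{N-1}+\tfrac12\tfrac{K'}{K^{(N+2)/2}}\Big]\ge0
\]
by hypothesis \eqref{eq cdt 3}. For the initial value, the expansions in the proof of Lemma~\ref{lem:CNP-asymptotics} give $K(t)=t^{-2}+O(1)$, so $K^{-N/2}=t^N+O(t^{N+2})$. Since also $\Phi(t)=t^N+O(t^{N+2})$, we get $H(0^+)=0$. Hence $H\ge0$, so $G'\ge0$, so $G\ge G(0)=0$, so $F'\ge0$, and finally $F\ge F(0)=0$.

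\textbf{Main obstacle.} The delicate point is Step~3: one needs $K>0$ on all of $(0,\infty)$ for $K^{-N/2}$, and the hypothesis itself, to make sense. Near $0$ this holds because $K\sim t^{-2}$, and I would read \eqref{eq cdt 3} as implicitly including $K>0$. If instead $K$ could vanish, I would argue on the maximal interval $(0,t_*)$ where $K>0$. At $t_*$ the hypothesis forces $K^{-N/2}\le\Phi$ to remain finite, which contradicts $K\to0^+$ there. So $K$ stays positive, and the chain of inequalities above goes through on the whole half-line.
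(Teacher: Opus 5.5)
Your proof is correct and follows the paper's argument essentially step for step. You use the same chain $F\to G\to H=\Phi-K^{-N/2}$, the same identity $G'=(N-2)\psi^{N-1}\bigl(K-\Phi^{-2/N}\bigr)$, and the same use of $H(0^+)=0$ together with the hypothesis giving $H'\ge 0$. Your continuity argument that $K$ cannot reach $0$ on $(0,\infty)$ (because $K^{-N/2}\le\Phi$ stays bounded on the maximal interval where $K>0$) is a welcome addition: the paper tacitly assumes $K>0$ when it raises $K$ to the power $-N/2$.
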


\begin{proof}
We aim to find a sufficient condition on $\psi$ such that the infimum in the
definition of $C(N,2)$ is attained at $t=0$, namely
\[
\psi^{2(N-1)}(t)-\Phi^{\frac{2(N-1)}{N}}(t)
\geq \frac{6(N-1)}{N+2}a_3\Phi^2(t),
\qquad t\ge 0,\; N\ge 3.
\]
Define
\[
F(t):=\psi^{2(N-1)}(t)-\Phi^{\frac{2(N-1)}{N}}(t)
-\frac{6(N-1)}{N+2}a_3\Phi^2(t).
\]
Since $F(0)=0$, it is enough to show that $F'(t)\ge 0$ for all $t\ge 0$.
A direct computation gives
\[
F'(t)
=2(N-1)\psi^{N-1}(t)
\left(
\psi^{N-2}(t)\psi'(t)
-\Phi^{\frac{N-2}{N}}(t)
-\frac{6N}{N+2}a_3\Phi(t)
\right)
=:2(N-1)\psi^{N-1}(t)G(t).
\]
Hence, $F'(t)\ge 0$ follows if $G(t)\ge 0$. Since $G(0)=0$, it suffices to
ensure that $G'(t)\ge 0$ for all $t\ge 0$.

Differentiating $G$, we obtain
\begin{align*}
G'(t)
&=(N-2)\psi^{N-3}(t)(\psi'(t))^2
+\psi^{N-2}(t)\psi''(t) \\
&\quad -(N-2)\Phi^{-2/N}(t)\psi^{N-1}(t)
-\frac{6N^2}{N+2}a_3\psi^{N-1}(t).
\end{align*}
Factoring out $\psi^{N-1}(t)$, we see that $G'(t)\ge 0$ provided
\[
\Phi(t)\ge
\left(
\left(\frac{\psi'(t)}{\psi(t)}\right)^2
+\frac{1}{N-2}\frac{\psi''(t)}{\psi(t)}
-\frac{6N^2}{N^2-4}a_3
\right)^{-N/2}.
\]
Set
\[
K(t):=
\left(\frac{\psi'(t)}{\psi(t)}\right)^2
+\frac{1}{N-2}\frac{\psi''(t)}{\psi(t)}
-\frac{6N^2}{N^2-4}a_3,
\qquad
H(t):=\Phi(t)-K(t)^{-N/2}.
\]
From the expansion of $\psi$ near $t=0$, we obtain
\[
K^{-1}(t)
=t^2\left(1+2\frac{N-1}{N+2}a_3t^2+o(t^2)\right)\to 0,
\qquad t\to 0,
\]
and hence $H(t)\to 0$ as $t\to 0$. Therefore, $H(t)\ge 0$ for all $t\ge 0$
follows if $H'(t)\ge 0$.

A direct computation shows that
\[
H'(t)\ge 0
\quad\Longleftrightarrow\quad
N\psi^{N-1}(t)
+\frac{N}{2}\frac{K'(t)}{K(t)^{(N+2)/2}}
\ge 0,
\]
which is equivalent to
\begin{equation}\label{eq cdt 3}
\psi^{N-1}(t)
+\frac{1}{2}\frac{K'(t)}{K(t)^{\frac{N+2}{2}}}
\ge 0.
\end{equation}
Thus, condition \eqref{eq cdt 3} is sufficient to guarantee $F(t)\ge 0$ for
all $t\ge 0$, and consequently the infimum in the definition of $C(N,2)$ is
attained at $t=0$.
\end{proof}

\medskip

\subsection{Centered isoperimetric inequality}
Our approach to establishing the logarithmic Sobolev inequality relies on an improved version of the P\'olya--Szeg\"o  inequality. Although a highly nontrivial problem is to determine when the P\'olya–Szeg\"o inequality remains valid in the setting of model manifolds. 
Let $\dot H^{1}(\mathbb{M}^n)$ denotes the space of functions
$v \in L^2_{\mathrm{loc}}(\mathbb{M}^N)$ such that $\nabla v \in L^2(\mathbb{M}^N)$.

\begin{definition}[P\'olya--Szeg\"o inequality]\label{def:polya-szego}
We say that a noncompact model manifold $M^n$ supports the \emph{P\'olya--Szeg\"o inequality} if, for every function
$v \in \dot H^{1}(\mathbb{M}^N)$, its symmetric decreasing rearrangement $v^\star$ also belongs to
$\dot H^{1}(\mathbb{M}^N)$ and satisfies
\[
\int_{\mathbb{M}^N} |\nabla v^\star|^2 \, dV
\;\le\;
\int_{\mathbb{M}^N} |\nabla v|^2 \, dV .
\]
\end{definition}
Muratori and Volzone~\cite[Theorem~3.9]{MV25} showed that this inequality is not always true in model manifolds: they constructed explicit families of model manifolds for which the P\'olya–Szeg\"o principle breaks down. At the same time, their analysis clarifies the geometric condition  behind this phenomenon. In particular, they derived explicit geometric hypotheses under which the inequality is restored. A key outcome of their work is the identification of a centered isoperimetric inequality as a sufficient condition. In particular, they proved that for any model manifold, \eqref{def:polya-szego} holds whenever $\mathbb{M}^N$ supports a centered isoperimetric inequality, namely,
\begin{align}\label{isoper}
\text{Per}(B_r(x_0)) \leq \text{Per}(\Omega) \quad \forall \,  \Omega \in \mathcal{B}_b(\mathbb{M}^N),
\end{align}
where $\text{Vol}_g(B_r(x_0)) = V(r) = \text{Vol}(\Omega)$ and $\mathcal{B}_b(\mathbb{M}^N)$ denotes the family of all bounded Borel sets in $\mathbb{M}^N$ (see \cite[Proposition~3.5]{MV25}). 
\medskip

Now we shall establish the following improved version of  P\'olya–Szeg\"o in $\mathbb{M}^N.$ In what follows, we will consistently assume that $a_3:= \psi^{\prime\prime\prime}(0) > 0$.

\begin{theorem}
   Let $\mathbb{M}^N$ be an $N$-dimensional model manifold endowed with the
rotationally symmetric metric where the warping function $\psi$ satisfies \eqref{psi}, \eqref{eq cdt 3},
and $a_3=\psi'''(0)>0$. Assume that $\mathbb{M}^N$ supports a \emph{centered
isoperimetric inequality} \eqref{isoper}. Then,
    $$\int_{\mathbb{M}^N}|\nabla_g u|_g^2dV-\dfrac{3N^2(N-1)}{2(N+2)}a_3\int_{\mathbb{M}^N}|u|^2_gdV \geq \int_{\mathbb{R}^N}|\nabla u_e^\sharp|^2dx.$$
\end{theorem}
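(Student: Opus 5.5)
The plan mirrors the proof of Lemma~\ref{keyLem}, with the hyperbolic-specific ingredient (Lemma~\ref{L2I}) replaced by the model-manifold lower bound obtained from Lemma~\ref{lem:attainment-zero}.

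First, reduce to symmetric functions. Since $\mathbb{M}^N$ supports the centered isoperimetric inequality \eqref{isoper}, the result of Muratori--Volzone quoted above gives the P\'olya--Szeg\"o inequality
\[
\int_{\mathbb{M}^N}|\nabla_g u_g^\sharp|_g^2\,dV\le \int_{\mathbb{M}^N}|\nabla_g u|_g^2\,dV .
\]
Equimeasurability also gives $\int_{\mathbb{M}^N}|u_g^\sharp|^2dV=\int_{\mathbb{M}^N}|u|^2dV=\int_0^\infty v(s)^2ds$, where $v=u^*$. It therefore suffices to prove the inequality with $u$ replaced by $u_g^\sharp$. For $u_g^\sharp$, Lemma~\ref{lemC} with $p=2$ gives the exact decomposition
\[
\int_{\mathbb{M}^N}|\nabla_g u_g^\sharp|_g^2dV=\int_{\mathbb{R}^N}|\nabla u_e^\sharp|^2dx+(N\sigma_N)^2\int_0^\infty|v'(s)|^2k_{N,2}(s/\sigma_N)\,ds .
\]

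Second, bound the correction term from below. Set $c:=\frac{6(N-1)}{N+2}a_3$. By Lemma~\ref{lem:CNP-asymptotics}(i), $c$ is the limit at $t=0$ of $k_{N,2}(\Phi(t))/\Phi(t)^2$. By Lemma~\ref{lem:attainment-zero}, condition \eqref{eq cdt 3} forces the infimum of this quotient to be attained at $t=0$. The proof of that lemma shows $F(t)=\psi^{2(N-1)}-\Phi^{2(N-1)/N}-c\Phi^2\ge0$ for all $t\ge0$. Since $\Phi$ is a diffeomorphism of $[0,\infty)$, this gives $k_{N,2}(s)\ge c\,s^2$ for all $s\ge0$. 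Hence
\[
(N\sigma_N)^2\int_0^\infty|v'(s)|^2k_{N,2}(s/\sigma_N)\,ds\ \ge\ N^2c\int_0^\infty|v'(s)|^2s^2\,ds .
\]

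Third, apply a one-dimensional Hardy-type identity. Put $w(s)=v(s)s^{1/2}$ exactly as in Lemma~\ref{keyLem}. Expanding $|w'|^2s$ and integrating the cross term $v v'$ by parts gives
\[
\int_0^\infty|v'|^2s^2\,ds=\int_0^\infty|w'|^2s\,ds+\tfrac14\int_0^\infty v^2\,ds\ \ge\ \tfrac14\int_0^\infty v^2\,ds .
\]
For this we need $v(s)^2s\to0$ at $0$ and at $\infty$, which we justify first for compactly supported smooth $u$ (where $v$ is bounded and eventually zero) and then by density in $W^{1,2}$. Combining the three steps yields the coefficient $\frac{N^2c}{4}=\frac{3N^2(N-1)}{2(N+2)}a_3$, which is exactly the constant in the statement.

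The main obstacle is the second step. Lemma~\ref{lem:attainment-zero} only asserts attainment at $t=0$, so its argument has to be checked: one must confirm that $K(t)>0$, so that $K^{-N/2}$ makes sense, and that the reduction $G'\ge0\Leftarrow\Phi\ge K^{-N/2}$ has the right sign. I would verify this directly, using $K(t)\sim t^{-2}$ near $0$ from the stated expansion, and at infinity the hypothesis \eqref{cdt 8.4} ensuring $\psi'/\psi\to C_1>0$. If positivity of $K$ fails somewhere, on that set $K^{-N/2}$ is undefined, $G'\ge0$ holds trivially there, and the comparison is only needed where $K>0$.
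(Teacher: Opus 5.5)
Your proposal is correct and follows the paper's proof essentially step for step. Both use the decomposition of Lemma~\ref{lemC}, the bound $k_{N,2}(s)\ge \frac{6(N-1)}{N+2}a_3\, s^2$ supplied by Lemma~\ref{lem:attainment-zero}, the substitution $w=v\,s^{1/2}$, and P\'olya--Szeg\"o from the centered isoperimetric inequality; the paper leaves that last step implicit, and your density argument for the boundary terms is slightly more careful than the paper's bare ``$w(0)=w(\infty)=0$''.

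Your fallback in the last paragraph is wrong. Since $G'=(N-2)\psi^{N-1}\bigl(K-\Phi^{-2/N}\bigr)$, any point with $K\le 0$ gives $G'<0$, not $G'\ge 0$. The case never arises, however, and you do not need \eqref{cdt 8.4}, which is not among the theorem's hypotheses. Indeed, $K\to+\infty$ as $t\to0^+$. On any interval $(0,t_1)$ where $K>0$, condition \eqref{eq cdt 3} together with $H(0^+)=0$ gives $K\ge \Phi^{-2/N}\ge \Phi(t_1)^{-2/N}>0$. So $K$ can never reach zero.
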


\begin{proof}
Using Lemma~\ref{lemC} and Corollary~\ref{lemlower} we have 
\begin{align*}
    \int_{\mathbb{M}^N}|\nabla_g u_g^\sharp (x)|_g^2dV&=\int_{\mathbb{R}^N}|\nabla u_e^\sharp|^2dx+(N\sigma_N)^2 \int_0^\infty |v'(s)|^2 k_{N,2}(s/\sigma_N)ds\\
    & \geq \int_{\mathbb{R}^N}|\nabla u_e^\sharp|^2dx+\dfrac{6N^2(N-1)}{N+2}a_3 \int_0^\infty |v'(s)|^2 s^2ds\\
    & = \int_{\mathbb{R}^N}|\nabla u_e^\sharp|^2dx+\dfrac{6N^2(N-1)}{N+2}a_3 \left(\int_0^\infty |w'(s)|^2 sds+\dfrac{1}{4}\int_0^\infty (v(s))^2ds\right),
\end{align*}
where $w(s):=v(s)s^{1/2}$. Here, we used $w(0)=w(\infty)=0$. Thus,
$$\int_{\mathbb{M}^N}|\nabla_g u_g^\sharp (x)|_g^2dV-\dfrac{3N^2(N-1)}{2(N+2)}a_3\int_{\mathbb{M}^N}|u_g^\sharp|^2_gdV \geq \int_{\mathbb{R}^N}|\nabla u_e^\sharp|^2dx.$$

\end{proof}

As a direct consequence, we obtain
\begin{corollary}
    Under the same assumptions as the above theorem, for any $u \in W^{1,2}(\mathbb{M}^N)$ such that $\|u\|_{L^2(\mathbb{M}^N)}=1$, we have
    $$\int_{\mathbb{M}^N} |u|^2\log|u|dV \leq \dfrac{N}{4}\log\left[\mathcal{L}_2\left(\int_{\mathbb{M}^N}|\nabla_gu|_g^2dV-\dfrac{3N^2(N-1)}{2(N+2)}a_3\int_{\mathbb{M}^N}|u|_g^2dV\right)\right],$$
    where the constant $\mathcal{L}_2$ is from \cite{PD03}.
\end{corollary}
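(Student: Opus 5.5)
The plan is to copy the proof of Theorem \ref{T1} with $p=2$, replacing Lemma \ref{keyLem} by the improved P\'olya--Szeg\"o inequality on $\mathbb{M}^N$ just proved. Fix $u\in W^{1,2}(\mathbb{M}^N)$ with $\|u\|_{L^2(\mathbb{M}^N)}=1$. By density I first take $u$ smooth with compact support and pass to the limit at the end. Replacing $u$ by $|u|$ changes neither side, so I may assume $u\ge 0$.

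The first step is the layer-cake identity
\[
\int_{\mathbb{M}^N}|u|^2\log|u|\,dV=\int_{\mathbb{R}^N}(u_e^\sharp)^2\log(u_e^\sharp)\,dx .
\]
Its proof is verbatim the one in the proof of Theorem \ref{T1}. Split into $\{|u|<1\}$ and $\{|u|\ge1\}$, and write each piece as
\[
\int V(\{|u|>t\})\,(2t\log t+t)\,dt
\]
by Fubini, using $\int_0^1(2t\log t+t)\,dt=0$. Then use that $u$ and $u_e^\sharp$ are equimeasurable, since both are built from $u^*$. Only the distribution function enters, so the geometry of $\mathbb{M}^N$ plays no role. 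In the same way, $\|u_e^\sharp\|_{L^2(\mathbb{R}^N)}=\|u\|_{L^2(\mathbb{M}^N)}=1$. Finiteness of the pieces follows from $|t^2\log t|\lesssim t^{2-\epsilon}+t^{2+\epsilon}$ together with Sobolev embedding for the Euclidean function $u_e^\sharp$.

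Next I apply the sharp Euclidean $L^2$ logarithmic Sobolev inequality \eqref{EplogS} of \cite{PD03} with $p=2$ to the normalized function $u_e^\sharp$:
\[
\int_{\mathbb{R}^N}(u_e^\sharp)^2\log u_e^\sharp\,dx\le \frac N4\log\Big[\mathcal{L}_2\int_{\mathbb{R}^N}|\nabla u_e^\sharp|^2dx\Big].
\]
Here I need $u_e^\sharp\in W^{1,2}(\mathbb{R}^N)$. The $L^2$ part is equimeasurability. The gradient is finite by the preceding theorem, because it is bounded by
\[
\int_{\mathbb{M}^N}|\nabla_g u|^2\,dV-\frac{3N^2(N-1)}{2(N+2)}a_3\int_{\mathbb{M}^N}|u|^2\,dV<\infty .
\]

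Since $\log$ is increasing, I then insert that same theorem, which states
\[
\int_{\mathbb{R}^N}|\nabla u_e^\sharp|^2\,dx\le\int_{\mathbb{M}^N}|\nabla_gu|_g^2\,dV-\frac{3N^2(N-1)}{2(N+2)}a_3\int_{\mathbb{M}^N}|u|^2\,dV .
\]
This gives the claim. The bracket on the right is automatically at least $\|\nabla u_e^\sharp\|_2^2>0$, so the logarithm is well defined.

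The only delicate point is the approximation step for general $u\in W^{1,2}$. For smooth approximants $u_k\to u$ in $W^{1,2}$, renormalized to $\|u_k\|_2=1$, the right-hand sides converge. The left-hand sides converge along a subsequence by a.e. convergence together with uniform integrability. That integrability comes from the bound $t^2|\log t|\le C(t^{2-\epsilon}+t^{2^*})$ and uniform Sobolev bounds on $(u_k)_e^\sharp$. If the small-$t$ part is troublesome, I would instead apply Fatou's lemma on $\{|u|\ge1\}$ and dominated convergence on $\{|u|<1\}$ after truncation.
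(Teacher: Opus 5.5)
Your proposal is correct and follows the paper's route exactly: the layer-cake identity from the proof of Theorem \ref{T1} transfers the entropy to $u_e^\sharp$, the sharp Euclidean inequality of \cite{PD03} with $p=2$ is applied to $u_e^\sharp$, and the improved P\'olya--Szeg\"o theorem on $\mathbb{M}^N$ then bounds $\|\nabla u_e^\sharp\|_2^2$. The density step is unnecessary, and its justification near $t=0$ fails anyway, since in infinite volume $W^{1,2}$ does not embed into $L^{2-\epsilon}$; instead run every step directly for $u\in W^{1,2}$, where only the positive part of the entropy needs to be finite (via $L^{2^*}$), and otherwise the left side is $-\infty$ and the inequality is trivial.
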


As an immediate consequence of Corollary~\ref{lemlower}, we obtain the following result.
\begin{theorem}
Let $\mathbb{M}^N$ be an $N$-dimensional model manifold endowed with the
rotationally symmetric metric where the warping function $\psi$ satisfies \eqref{psi}, \eqref{cdt 8.4}, \eqref{cdt 8.5} and $a_3=\psi'''(0)>0$. Assume that $\mathbb{M}^N$ supports a \emph{centered
isoperimetric inequality} \eqref{isoper}.

Then, for every $N \ge 2$ and $p>2$, the following inequality holds:
\[
\int_{\mathbb{M}^N} |\nabla_g u|_g^p \, dV
- C(N,p)\frac{N^p}{p^p}
\int_{\mathbb{M}^N} |u|^p \, dV
\ge
\int_{\mathbb{R}^N} |\nabla u_e^\sharp|^p \, dx .
\]
\end{theorem}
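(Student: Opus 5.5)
The plan follows the $p=2$ argument for the preceding theorem, with Corollary~\ref{lemlower} replacing the explicit bound and a weighted one-dimensional Hardy inequality replacing the identity for $w=v s^{1/2}$.

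\emph{Step 1: reduce to the rearrangement.} Since $\mathbb{M}^N$ supports the centered isoperimetric inequality \eqref{isoper}, the P\'olya--Szeg\"o principle of Muratori and Volzone holds. Its proof goes through coarea and the isoperimetric comparison, so it works for every $p$ and gives
\[
\int_{\mathbb{M}^N}|\nabla_g u|_g^p\,dV\ge \int_{\mathbb{M}^N}|\nabla_g u_g^\sharp|_g^p\,dV .
\]
Also $\|u\|_p=\|u_g^\sharp\|_p=\|v\|_{L^p(0,\infty)}$ with $v=u^*$. It therefore suffices to prove the inequality for $u_g^\sharp$.

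\emph{Step 2: bound the correction term.} Lemma~\ref{lemC} gives
\[
\int_{\mathbb{M}^N}|\nabla_g u_g^\sharp|_g^p\,dV=\int_{\mathbb{R}^N}|\nabla u_e^\sharp|^p\,dx+(N\sigma_N)^p\int_0^\infty|v'(s)|^p k_{N,p}(s/\sigma_N)\,ds .
\]
The hypotheses \eqref{cdt 8.4}, \eqref{cdt 8.5} and $a_3>0$ place us in Corollary~\ref{lemlower}. Lemma~\ref{lem:CNP-asymptotics} shows the quotient tends to $+\infty$ at $0$ for $p>2$ and to $((N-1)C_1/N)^p>0$ at infinity, so the infimum is positive. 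This gives $k_{N,p}(s/\sigma_N)\ge C(N,p)\,s^p\sigma_N^{-p}$, and the $\sigma_N^p$ factors cancel:
\[
(N\sigma_N)^p\int_0^\infty|v'|^p k_{N,p}(s/\sigma_N)\,ds\ge C(N,p)N^p\int_0^\infty|v'(s)|^p s^p\,ds .
\]

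\emph{Step 3: a one-dimensional Hardy inequality.} For nonincreasing $v\ge0$ with $v(\infty)=0$ we need
\[
\int_0^\infty|v'(s)|^p s^p\,ds\ \ge\ \frac{1}{p^p}\int_0^\infty |v(s)|^p\,ds .
\]
This is the Hardy inequality with weight $s^p$. To prove it, integrate by parts:
\[
\int_0^\infty v^p\,ds=[s v^p]_0^\infty-p\int_0^\infty s\,v^{p-1}v'\,ds\le p\int_0^\infty (s|v'|)\,v^{p-1}\,ds .
\]
H\"older's inequality bounds the right side by $p\,\big(\int s^p|v'|^p\big)^{1/p}\big(\int v^p\big)^{(p-1)/p}$, which rearranges to the claim. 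Combining Steps 1--3 yields exactly the constant $C(N,p)N^p/p^p$ in the statement.

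\emph{Main obstacle.} The delicate point is the vanishing of the boundary terms $s\,v(s)^p$ at $0$ and at $\infty$. At $\infty$, monotonicity and $v\in L^p$ give $s\,v(s)^p\le\int_0^s v^p\to$ finite, and refining this shows $s\,v(s)^p\to0$. At $0$, I would first take $u\in C_c^\infty$, so $v$ is bounded, and then extend to $W^{1,p}$ by density. A secondary caveat is that the P\'olya--Szeg\"o principle is stated for $p=2$ in Definition~\ref{def:polya-szego}; I would remark that the isoperimetric proof transfers verbatim to all $p$.
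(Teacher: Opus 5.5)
Your proposal is correct and is essentially the paper's argument. The paper gives no written proof of this theorem and calls it an immediate consequence of Corollary~\ref{lemlower}, modeled on the $p=2$ case. That argument uses the decomposition of Lemma~\ref{lemC}, the bound $k_{N,p}(s)\ge C(N,p)s^p$ from Corollary~\ref{lemlower}, the one-dimensional Hardy inequality $\int_0^\infty |v'|^p s^p\,ds\ge p^{-p}\int_0^\infty v^p\,ds$, and P\'olya--Szeg\"o, and this yields the constant $C(N,p)N^p/p^p$ as you found. The only difference is cosmetic: you prove the Hardy step by integration by parts and H\"older, while the paper suggests the substitution $w=v s^{1/p}$, which for $p>2$ needs the convexity bound $|a+b|^p\ge |a|^p+p|a|^{p-2}ab$ rather than an identity. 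You are also right to flag that P\'olya--Szeg\"o must hold for general $p$, which the paper uses without comment.
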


To sum up, we  prove the following version of  $p$-log-Sobolev inequality:
    \begin{theorem}
       Let $\mathbb{M}^N$ be an $N$-dimensional model manifold endowed with the
rotationally symmetric metric where the warping function $\psi$ satisfies \eqref{psi}, \eqref{cdt 8.4}, \eqref{cdt 8.5} and $a_3=\psi'''(0)>0$. Assume that $\mathbb{M}^N$ supports a \emph{centered
isoperimetric inequality} \eqref{isoper}. Let $2<p<N$. Then, for all $u \in W^{1,p}(\mathbb{M}^N)$ with $\|u\|_{L^p(\mathbb{M}^N)}=1$, we have
    $$\int_{\mathbb{M}^N} |u|^p\log|u|dV \leq \dfrac{N}{p^2}\log\left[\mathcal{L}_p\left(\int_{\mathbb{M}^N}|\nabla_g u|_g^pdV-C(N,p)\dfrac{N^p}{p^p}\int_{\mathbb{M}^N}|u|^p_gdV\right)\right],$$
    where $\mathcal{L}_p$ is from \cite{PD03}.
\end{theorem}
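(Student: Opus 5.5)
The proof will follow the proof of Theorem~\ref{T1}, with the improved P\'olya--Szeg\"o inequality on $\mathbb{M}^N$ (the preceding theorem, stated for $p>2$) replacing Lemma~\ref{keyLem}.

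\emph{Step 1: the entropy term is rearrangement invariant.} Write
\[
\int_{\mathbb{M}^N}|u|^p\log|u|\,dV=\int_0^\infty V(\{|u|>t\})\bigl(pt^{p-1}\log t+t^{p-1}\bigr)\,dt .
\]
To justify this, split into $\{|u|<1\}$ and $\{|u|\ge 1\}$ and use Fubini. On the first set one uses $\int_0^1(pt^{p-1}\log t+t^{p-1})\,dt=0$. Only the volume of level sets enters, and $u_e^\sharp$ is equimeasurable with $u$. Hence the left-hand side equals $\int_{\mathbb{R}^N}(u_e^\sharp)^p\log u_e^\sharp\,dx$. Likewise $\|u_e^\sharp\|_{L^p(\mathbb{R}^N)}=\|u\|_{L^p(\mathbb{M}^N)}=1$.

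\emph{Step 2: apply the sharp Euclidean inequality.} Apply the Del Pino--Dolbeault inequality \eqref{EplogS} to $u_e^\sharp$:
\[
\int_{\mathbb{R}^N}(u_e^\sharp)^p\log u_e^\sharp\,dx\le\frac{N}{p^2}\log\Bigl[\mathcal{L}_p\int_{\mathbb{R}^N}|\nabla u_e^\sharp|^p\,dx\Bigr].
\]

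\emph{Step 3: transfer back to $\mathbb{M}^N$.} Use the preceding theorem:
\[
\int_{\mathbb{R}^N}|\nabla u_e^\sharp|^p\,dx\le \int_{\mathbb{M}^N}|\nabla_g u|_g^p\,dV-C(N,p)\frac{N^p}{p^p}\int_{\mathbb{M}^N}|u|^p\,dV .
\]
Since $\log$ is increasing, the claim follows.

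\emph{Main obstacle.} All the geometric input sits in the preceding theorem, and it should be checked that the constant $C(N,p)N^p/p^p$ really comes out of it. Here is how I would verify it. Apply Lemma~\ref{lemC} and Corollary~\ref{lemlower}; the assumptions \eqref{cdt 8.4} and \eqref{cdt 8.5} give a positive infimum, because the limit at $0$ is $+\infty$ for $p>2$ and the limit at $\infty$ is positive. This yields
\[
(N\sigma_N)^p\int_0^\infty|v'|^p k_{N,p}(s/\sigma_N)\,ds\ge C(N,p)N^p\int_0^\infty|v'(s)|^ps^p\,ds .
\]
Then use the one-dimensional Hardy inequality $\int_0^\infty|v'|^ps^p\,ds\ge p^{-p}\int_0^\infty|v|^p\,ds$, valid for nonincreasing $v$ vanishing at infinity. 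This is the step where the substitution $w=vs^{1/p}$ is used. Finally, $\int_0^\infty|v|^p\,ds=\|u\|_p^p$.

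The centered isoperimetric inequality \eqref{isoper}, through \cite{MV25}, supplies the P\'olya--Szeg\"o step $\|\nabla_g u_g^\sharp\|_p\le\|\nabla_g u\|_p$. Strictly speaking, \cite{MV25} is stated for $p=2$. I would extend it to general $p$ by the standard coarea argument: isoperimetry on level sets combined with H\"older's inequality. The restriction $p<N$ is needed only so that \eqref{EplogS} applies.
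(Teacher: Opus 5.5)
Your proposal is correct and follows the paper's proof: rearrangement invariance of $\int|u|^p\log|u|$, the Del Pino--Dolbeault inequality applied to $u_e^\sharp$, and then the preceding improved P\'olya--Szeg\"o theorem on $\mathbb{M}^N$. Your check of that theorem's constant $C(N,p)N^p/p^p$ (via Lemma~\ref{lemC}, Corollary~\ref{lemlower} and the Hardy inequality with constant $p^{-p}$) correctly fills in a step the paper only asserts. So does your remark that \cite{MV25} covers only $p=2$, so that the $L^p$ P\'olya--Szeg\"o step needs the standard coarea/isoperimetry/H\"older argument.
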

\begin{proof}
    By the same argument as in the proof of Theorem \ref{T1}, we obtain that for all $p>2$,
$$\int_{\mathbb{M}^N}|u|^p\log |u| dV=\int_{\mathbb{R}^N}(u_e^\sharp)^p\log (u_e^\sharp) dx.$$
From Theorem $1.1$ in \cite{PD03}, it follows us
\begin{align*}
        \int_{\mathbb{M}^N} |u|^p\log|u|dV&=\int_{\mathbb{R}^N}(u_e^\sharp)^p\log (u_e^\sharp) dx\\
        &\leq \dfrac{N}{p^2} \log \left[\mathcal{L}_p \int_{\mathbb{R}^N}|\nabla (u_e^\sharp)|^pdx\right]\\
        &\leq \dfrac{N}{p^2}\log\left[\mathcal{L}_p\left(\int_{\mathbb{M}^N}|\nabla_g u|_g^pdV-C(N,p)\dfrac{N^p}{p^p}\int_{\mathbb{M}^N}|u|^p_gdV\right)\right],\nonumber
    \end{align*}
    as desired.
\end{proof}
\medskip 

\section*{Acknowledgments} 
 A. Do and G. Lu were partially supported by grants from the
Simons Foundation. D. Ganguly was partially supported by the
SERB MATRICS (MTR/2023/000331). N. Lam was partially supported by an NSERC
Discovery Grant.

\end{document}